\documentclass[10pt,a4paper]{article}

\usepackage{amsthm}
\usepackage{amssymb}
\usepackage{latexsym}
\usepackage{amsmath}
\usepackage{euscript}
\usepackage{graphicx}
\usepackage{fancyhdr}
\usepackage{calc}
\usepackage{color}
\usepackage{romannum}
\usepackage{paralist}

\newtheorem{Def}{Definition}[section]
\newtheorem{Lem}[Def]{Lemma}

\newtheorem{Thm}[Def]{Theorem}
\newtheorem{Prop}[Def]{Proposition}

\def\im{{\mathop{\rm im}}}
\def\Span{{\mathop{\rm span}}}
\def\dom{{\mathop{\rm dom}}}

\def\index{{\mathop{\rm index}}}

\def\loc{{\mathop{\rm loc}}}
\def\cs{{\mathop{\rm cs}}}

\def\bff{{\mathop{\rm bf}}}
\def\tf{{\mathop{\rm tf}}}
\def\tb{{\mathop{\rm tb}}}
\def\lb{{\mathop{\rm lb}}}
\def\rb{{\mathop{\rm rb}}}

\def\eq#1{{\rm(\ref{#1})}}

\begin{document}
\pagenumbering{arabic}

\title{On the Cauchy problem for the heat equation on Riemannian manifolds with conical singularities}
\author{Tapio Behrndt}
\maketitle

\begin{abstract}
We study the existence and regularity of solutions to the Cauchy problem for the inhomogeneous heat equation on compact Riemannian manifolds with conical singularities. We introduce weighted H\"older and Sobolev spaces with discrete asymptotics and we prove existence and maximal regularity of solutions to the Cauchy problem for the inhomogeneous heat equation, when the free term lies in a weighted parabolic H\"older or Sobolev space with discrete asymptotics. This generalizes a result previously obtained by Coriasco, Schrohe, and Seiler \cite[Thm. 7.2]{CSS2} by different means.
\end{abstract}

\section{Introduction}
In this paper we study the Cauchy problem for the inhomogeneous heat equation on compact Riemannian manifolds with conical singularities. More precisely, when $(M,g)$ is a compact Riemannian manifold with conical singularities $x_1,\ldots,x_n$ and $T>0$, then we study the existence and regularity of solutions to the Cauchy problem
\begin{align}\label{HeatEquation}
\begin{split}
&\partial_tu(t,x)=\Delta_gu(t,x)+f(t,x),\quad\mbox{for }(t,x)\in(0,T)\times M',\\
&u(0,x)=0,\quad\quad\quad\quad\quad\quad\quad\quad\quad\;\mbox{for } x\in M', 
\end{split}
\end{align}
where $f:(0,T)\times M'\rightarrow\mathbb{R}$ is a given function and $M'=M\backslash\{x_1,\ldots,x_n\}$. In this problem it seems natural to assume that $f$ belongs to a weighted parabolic H\"older or Sobolev space. Weighted H\"older and Sobolev spaces are generalizations of the usual H\"older and Sobolev spaces on a compact Riemannian manifold, which have a further parameter that describes the rate of decay of a function near each conical singularity. It turns out, however, that in general the Cauchy problem \eq{HeatEquation} does not have solutions with maximal regularity in weighted parabolic H\"older or Sobolev spaces. By maximal regularity we simply mean that the solution to the Cauchy problem \eq{HeatEquation} has the most regularity, i.e. differentiability and rate of decay, one can expect. To overcome this problem we introduce weighted H\"older and Sobolev spaces with discrete asymptotics. These are enlargements of the weighted H\"older and Sobolev spaces by certain finite dimensional spaces of functions. We then prove existence and maximal regularity of solutions to the Cauchy problem \eq{HeatEquation}, when $f$ lies in a weighted parabolic H\"older or Sobolev space with discrete asymptotics. Our results make Inverse Function Theorem arguments applicable to short time existence problems for nonlinear parabolic equations on Riemannian manifolds with conical singularities. One application of this kind can be found in \cite{Behrndt} and in the author's thesis \cite{BehrndtDPhil} where the Lagrangian mean curvature flow with isolated conical singularities is studied and Theorem \ref{SobolevRegularityHeat} is applied.

We give a short overview of this paper. In \S\ref{ANALYSIS} we introduce Riemannian manifolds with conical singularities, weighted H\"older and Sobolev spaces, and we discuss the Laplace operator acting on weighted H\"older and Sobolev spaces. Then we introduce the notion of discrete asymptotics, define weighted H\"older and Sobolev spaces with discrete asymptotics, and study the Laplace operator acting on weighted H\"older and Sobolev spaces with discrete asymptotics. In \S\ref{HEATKERNEL} we first review the construction of the Friedrichs heat kernel on general Riemannian manifolds. Then we discuss in an informal way the parametrix construction for the Friedrichs heat kernel on compact Riemannian manifolds with conical singularities following Mooers \cite{Mooers}. In \S\ref{CAUCHY} we then study the Cauchy problem \eq{HeatEquation} on compact Riemannian manifold with conical singularities. We first introduce weighted parabolic H\"older and Sobolev spaces with discrete asymptotics and prove weighted Schauder and weighted $L^p$-estimates. Finally, in Theorems \ref{HoelderRegularityHeat} and \ref{SobolevRegularityHeat}, we prove existence and maximal regularity of solutions to the Cauchy problem \eq{HeatEquation} when the free term lies in a weighted parabolic H\"older or Sobolev space with discrete asymptotics. Theorems \ref{HoelderRegularityHeat} and \ref{SobolevRegularityHeat} generalize a result that was previously obtained by Coriasco, Schrohe, and Seiler in \cite[Thm. 7.2]{CSS2}.

The author wishes to thank Dominic Joyce for useful comments and J\"org Seiler for pointing out mistakes in a previous proof of Theorem \ref{SobolevRegularityHeat}. This work was supported by a Sloane Robinson Foundation Award and by an EPSRC Research studentship.

\section{Riemannian manifolds with conical singularities}\label{ANALYSIS}
We begin with the definition of Riemannian cones.

\begin{Def}\label{RiemannianCone}
Let $(\Sigma,h)$ be an $(m-1)$-dimensional compact and connected Riemannian manifold, $m\geq 1$. Let $C=(\Sigma\times(0,\infty))\sqcup\{0\}$ and $C'=\Sigma\times(0,\infty)$ and write a general point in $C'$ as $(\sigma,r)$. Define a Riemannian metric on $C'$ by $g=\mathrm dr^2+r^2h$. Then we say that $(C,g)$ is the Riemannian cone over $(\Sigma,h)$ with Riemannian cone metric $g$.
\end{Def}

Next we define compact Riemannian manifolds with conical singularities.

\begin{Def}\label{DefinitionConicalSingularities}
Let $(M,d)$ be a metric space, $x_1,\ldots,x_n$ distinct points in $M$, and denote $M'=M\backslash\{x_1,\ldots,x_n\}$. Assume that $M'$ has the structure of a smooth and connected $m$-dimensional manifold, and that we are given a Riemannian metric $g$ on $M'$ that induces the metric $d$ on $M'$. Then we say that $(M,g)$ is an $m$-dimensional Riemannian manifold with conical singularities $x_1,\ldots,x_n$, if the following hold.\vspace{0.17cm}

\begin{compactenum}
\item[{\rm(i)}] We are given $R>0$ such that $d(x_i,x_j)>2R$ for $1\leq i<j\leq n$ and compact and connected $(m-1)$-dimensional Riemannian manifolds $(\Sigma_i,h_i)$ for $i=1,\ldots,n$. Denote by $(C_i,g_i)$ the Riemannian cone over $(\Sigma_i,h_i)$ for $i=1,\ldots,n$. 
\item[{\rm(ii)}] For $i=1,\ldots,n$ denote $S_i=\{x\in M\;:\;0<d(x,x_i)<R\}$. Then there exist $\mu_i\in\mathbb{R}$ with $\mu_i>0$ and diffeomorphisms $\phi_i:\Sigma_i\times(0,R)\rightarrow S_i$, such that
\begin{equation}\label{ConicalSingularity}
\left|\nabla^k(\phi_i^*(g)-g_i)\right|=O(r^{\mu_i-k})\quad\mbox{as }r\longrightarrow 0\;\mbox{for }k\in\mathbb{N}
\end{equation}
and $i=1,\ldots,n$. Here $\nabla$ and $|\cdot|$ are computed using the Riemannian cone metric $g_i$ on $\Sigma_i\times(0,R)$ for $i=1,\ldots,n$.\vspace{0.17cm}
\end{compactenum}
Additionally, if $(M,d)$ is a compact metric space, then we say that $(M,g)$ is a compact Riemannian manifold with conical singularities. 
\end{Def}

Finally we introduce the notion of a radius function.

\begin{Def}
Let $(M,g)$ be a Riemannian manifold with conical singularities as in Definition \ref{DefinitionConicalSingularities}. A radius function on $M'$ is a smooth function $\rho:M'\rightarrow(0,1]$, such that $\rho\equiv 1$ on $M'\backslash\bigcup_{i=1}^nS_i$ and
\begin{equation}\label{RadiusFunctionDecay}
|\phi_i^*(\rho)-r|=O(r^{1+\varepsilon})\quad\mbox{as }r\longrightarrow 0
\end{equation}
for some $\varepsilon>0$. Here $|\cdot|$ is computed using the Riemannian cone metric $g_i$ on $\Sigma_i\times(0,R)$ for $i=1,\ldots,n$. A radius function always exists.
\end{Def}

If $\rho$ is a radius function on $M'$ and $\boldsymbol\gamma=(\gamma_1,\ldots,\gamma_n)\in\mathbb{R}^n$, then we define a function $\rho^{\boldsymbol\gamma}$ on $M'$ as follows. On $S_i$ we set $\rho^{\boldsymbol\gamma}=\rho^{\gamma_i}$ for $i=1,\ldots,n$ and $\rho^{\boldsymbol\gamma}\equiv 1$ otherwise. Moreover, if $\boldsymbol\gamma,\boldsymbol\mu\in\mathbb{R}^n$, then we write $\boldsymbol\gamma\leq\boldsymbol\mu$ if $\gamma_i\leq\mu_i$ for $i=1,\ldots,n$, and $\boldsymbol\gamma<\boldsymbol\mu$ if $\gamma_i<\mu_i$ for $i=1,\ldots,n$. Finally, if $\boldsymbol\gamma\in\mathbb{R}^n$ and $a\in\mathbb{R}$, then we denote $\boldsymbol\gamma+a=(\gamma_1+a,\ldots,\gamma_n+a)\in\mathbb{R}^n$.

\subsection{Weighted H\"older and Sobolev spaces}\label{WeightedSpaces}
Throughout this subsection $(M,g)$ will denote a compact $m$-dimensional Riemannian manifold with conical singularities as in Definition \ref{DefinitionConicalSingularities}.

We begin by introducing weighted $C^k$-spaces and weighted H\"older spaces. For $k\in\mathbb{N}$ we denote by $C^k_{\loc}(M)$ the space of $k$-times continuously differentiable functions $u:M'\rightarrow\mathbb{R}$ and we set $C^{\infty}(M')=\bigcap_{k\in\mathbb{N}}C^k_{\loc}(M')$, which is the space of smooth functions on $M'$. For $\boldsymbol\gamma\in\mathbb{R}^n$ we define the $C^k_{\boldsymbol\gamma}$-norm by
\[
\|u\|_{C^k_{\boldsymbol\gamma}}=\sum_{j=0}^k\sup_{x\in M'}|\rho(x)^{-\boldsymbol\gamma+j}\nabla^ju(x)|\quad\mbox{for }u\in C^k_{\loc}(M'),
\]
whenever it is finite. A different choice of radius function defines an equivalent norm. Note that $u\in C^k_{\loc}(M')$ has finite $C^k_{\boldsymbol\gamma}$-norm if and only if $\nabla^ju$ grows at most like $\rho^{\boldsymbol\gamma-j}$ for $j=0,\ldots,k$ as $\rho\rightarrow 0$. We define the weighted $C^k$-space $C^k_{\boldsymbol\gamma}(M')$ by
\[
C^k_{\boldsymbol\gamma}(M')=\bigl\{u\in C^k_{\loc}(M')\;:\;\|u\|_{C^k_{\boldsymbol\gamma}}<\infty\bigr\}.
\]
Then $C^k_{\boldsymbol\gamma}(M')$ is a Banach space. We also set $C^{\infty}_{\boldsymbol\gamma}(M')=\bigcap_{k\in\mathbb{N}}C^k_{\boldsymbol\gamma}(M')$. The space $C^{\infty}_{\boldsymbol\gamma}(M')$ is in general not a Banach space.

Next we introduce weighted H\"older spaces. Let $\alpha\in(0,1)$ and $T$ be a tensor field over $M'$. We define a seminorm by
\[
[T]_{\alpha,\boldsymbol\gamma}=\sup_{\substack{x\neq y\in M'\\ d(x,y)<\delta_g(x)}}\left\{\min\left\{\rho(x)^{-\boldsymbol\gamma},\rho(y)^{-\boldsymbol\gamma}\right\}\frac{|T(x)-T(y)|}{d(x,y)^{\alpha}}\right\},
\]
whenever it is finite. Here $d(x,y)$ denotes the Riemannian distance of $x$ and $y$ with respect to $g$, and $\delta_g(x)$ denotes the injectivity radius of $g$ at $x$. Moreover, $|T(x)-T(y)|$ is understood in the sense that we first take the parallel transport of $T(x)$ along the unique minimizing geodesic connecting $x$ and $y$, and then compute the norm at the point $y$. We define the $C^{k,\alpha}_{\boldsymbol\gamma}$-norm by
\[
\|u\|_{C^{k,\alpha}_{\boldsymbol\gamma}}=\|u\|_{C^k_{\boldsymbol\gamma}}+[\nabla^ku]_{\alpha,\boldsymbol\gamma-k}\quad\mbox{for }u\in C^{k}_{\loc}(M'),
\]
whenever it is finite. The weighted H\"older space $C^{k,\alpha}_{\boldsymbol\gamma}(M')$ is given by
\[
C^{k,\alpha}_{\boldsymbol\gamma}(M')=\left\{u\in C^{k}_{\boldsymbol\gamma}(M')\;:\;\|u\|_{C^{k,\alpha}_{\boldsymbol\gamma}}<\infty\right\}.
\]
Then $C^{k,\alpha}_{\boldsymbol\gamma}(M')$ is a Banach space.

Next we define Sobolev spaces on $M'$. For a $k$-times weakly differentiable function $u:M'\rightarrow\mathbb{R}$ the $W^{k,p}$-norm is given by
\[
\|u\|_{W^{k,p}}=\left(\sum_{j=0}^k\int_{M'}|\nabla^ju|^p\;\mathrm dV_g\right)^{1/p},
\] 
whenever it is finite. Denote by $W^{k,p}_{\loc}(M')$ the space of $k$-times weakly differentiable functions on $M'$ that have locally a finite $W^{k,p}$-norm and define the Sobolev space $W^{k,p}(M')$ by
\[
W^{k,p}(M')=\left\{u\in W^{k,p}_{\loc}(M')\;:\;\|u\|_{W^{k,p}}<\infty\right\}.
\]
Then $W^{k,p}(M')$ is a Banach space. If $k=0$, then we write $L^p_{\loc}(M')$ and $L^p(M')$ instead of $W^{0,p}_{\loc}(M')$ and $W^{0,p}(M')$, respectively. Moreover, if $p=2$ we can define a scalar product on $W^{k,2}(M')$ by
\begin{equation}\label{ScalarProduct}
\langle u,v\rangle_{W^{k,2}}=\sum_{j=0}^k\int_{M'}g(\nabla^ju,\nabla^jv)\;\mathrm dV_g\quad\mbox{for }u,v\in W^{k,2}(M').
\end{equation}
Thus $W^{k,2}(M')$ is a Hilbert space.

Finally we define weighted Sobolev spaces. For $k\in\mathbb{N}$, $p\in[1,\infty)$, and $\boldsymbol\gamma\in\mathbb{R}^n$ we define the $W^{k,p}_{\boldsymbol\gamma}$-norm by
\[
\|u\|_{W^{k,p}_{\boldsymbol\gamma}}=\left(\sum_{j=0}^k\int_{M'}|\rho^{-\boldsymbol\gamma+j}\nabla^ju|^p\rho^{-m}\;\mathrm dV_g\right)^{1/p}\quad\mbox{for }u\in W^{k,p}_{\loc}(M'),
\]
whenever it is finite. A different choice of radius function defines an equivalent norm. We define the weighted Sobolev space $W^{k,p}_{\boldsymbol\gamma}(M')$ by
\[
W^{k,p}_{\boldsymbol\gamma}(M')=\left\{u\in W^{k,p}_{\loc}(M')\;:\;\|u\|_{W^{k,p}_{\boldsymbol\gamma}}<\infty\right\}.
\]
Then $W^{k,p}_{\boldsymbol\gamma}(M')$ is a Banach space. If $k=0$, then we write $L^p_{\boldsymbol\gamma}(M')$ instead of $W^{0,p}_{\boldsymbol\gamma}(M')$. Note that $L^p(M')=L^p_{-m/p}(M')$ and that $C^{\infty}_{\cs}(M')$, the space of smooth functions on $M'$ with compact support, is dense in $W^{k,p}_{\boldsymbol\gamma}(M')$ for every $k\in\mathbb{N}$, $p\in[1,\infty)$, and $\boldsymbol\gamma\in\mathbb{R}^n$. Moreover if $p=2$, then we can define a scalar product by
\[
\langle u,v\rangle_{W^{k,2}_{\boldsymbol\gamma}}=\sum_{j=0}^k\int_{M'}\rho^{-2\boldsymbol\gamma+2j}g(\nabla^ju,\nabla^jv)\rho^{-m}\;\mathrm dV_g\quad\mbox{for }u,v\in W^{k,2}_{\boldsymbol\gamma}(M').
\]
Thus $W^{k,2}_{\boldsymbol\gamma}(M')$ is a Hilbert space.

The following proposition follows immediately from H\"older's inequality.

\begin{Prop}\label{Duality}
Let $p,q\in(1,\infty)$ with $\frac{1}{p}+\frac{1}{q}=1$ and $\boldsymbol\gamma\in\mathbb{R}^n$. Then $\langle\cdot,\cdot\rangle_{L^2}$ as defined in \eq{ScalarProduct} defines a dual pairing $L^{p}_{\boldsymbol\gamma}(M')\times L^q_{-m-\boldsymbol\gamma}(M')\rightarrow\mathbb{R}$. In particular $L^{p}_{\boldsymbol\gamma}(M')$ and $L^q_{-m-\boldsymbol\gamma}(M')$ are Banach space duals of each other.
\end{Prop}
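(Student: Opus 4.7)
The plan is to reduce the claim to the classical duality between $L^p(M',\mathrm dV_g)$ and $L^q(M',\mathrm dV_g)$ by rescaling with an appropriate power of the radius function $\rho$. Since $\rho$ is smooth and strictly positive on $M'$, multiplication by any real power of $\rho$ is a well-defined bijection of locally integrable functions, so this rescaling is legitimate.

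Concretely, I would introduce the multiplication maps
$$T_p u = \rho^{-\boldsymbol\gamma-m/p}\,u, \qquad T_q v = \rho^{\boldsymbol\gamma+m/p}\,v.$$
A direct calculation from the definition of the weighted $L^p$-norm, using the identity $qm/p = qm - m$ (which is just a rewriting of $\tfrac1p+\tfrac1q=1$), shows that $T_p:L^p_{\boldsymbol\gamma}(M')\to L^p(M',\mathrm dV_g)$ and $T_q:L^q_{-m-\boldsymbol\gamma}(M')\to L^q(M',\mathrm dV_g)$ are isometric isomorphisms. The two exponents of $\rho$ are chosen so as to sum to zero, hence $(T_pu)(T_qv)=uv$ pointwise on $M'$; consequently $\langle u,v\rangle_{L^2}=\langle T_pu,T_qv\rangle_{L^2}$, and the ordinary H\"older inequality immediately gives
$$|\langle u,v\rangle_{L^2}|\leq\|T_pu\|_{L^p}\|T_qv\|_{L^q}=\|u\|_{L^p_{\boldsymbol\gamma}}\|v\|_{L^q_{-m-\boldsymbol\gamma}},$$
so the pairing is continuous.

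To identify the duals I would transport the classical duality through these isomorphisms: given $L\in(L^p_{\boldsymbol\gamma}(M'))^*$, the functional $L\circ T_p^{-1}$ lies in $(L^p(M',\mathrm dV_g))^*$, so by the standard $L^p$-$L^q$ duality there is a unique $w\in L^q(M',\mathrm dV_g)$ with $L\circ T_p^{-1}=\langle\cdot,w\rangle_{L^2}$; setting $v=T_q^{-1}w$ yields $L=\langle\cdot,v\rangle_{L^2}$ and $\|L\|=\|v\|_{L^q_{-m-\boldsymbol\gamma}}$. The reverse direction is symmetric. There is no genuine obstacle, only bookkeeping of exponents; the only point that might trip up a first draft is verifying the identity $qm/p=qm-m$, so I would double-check the conjugate-exponent calculation at the outset and then let the rest fall out from the standard $L^p$ duality.
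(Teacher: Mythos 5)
Your proof is correct and is essentially the argument the paper intends: the paper simply asserts the proposition follows from H\"older's inequality, and your isometric rescaling $T_pu=\rho^{-\boldsymbol\gamma-m/p}u$, $T_qv=\rho^{\boldsymbol\gamma+m/p}v$ (with the exponent bookkeeping checked correctly) is the standard way to make that precise, reducing both the continuity of the pairing and the identification of the duals to the classical $L^p$--$L^q$ duality on the $\sigma$-finite measure space $(M',\mathrm dV_g)$.
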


An important tool in the study of partial differential equations is the Sobolev Embedding Theorem, which gives embeddings between Sobolev spaces and of Sobolev spaces into H\"older spaces. The next theorem is a version of the Sobolev Embedding Theorem for weighted H\"older and Sobolev spaces.

\begin{Thm}\label{WeightedSobolevEmbedding}
Let $(M,g)$ be a compact $m$-dimensional Riemannian manifold with conical singularities as in Definition \ref{DefinitionConicalSingularities}. Let $k,l\in\mathbb{N}$, $p,q\in[1,\infty)$, $\alpha\in(0,1)$, and $\boldsymbol\gamma,\boldsymbol\delta\in\mathbb{R}^n$. Then the following hold.\vspace{0.17cm}
\begin{compactenum}
\item[{\rm(i)}] If $\frac{1}{p}\leq\frac{1}{q}+\frac{k-l}{m}$ and $\boldsymbol\gamma\geq\boldsymbol\delta$ then $W^{k,p}_{\boldsymbol\gamma}(M')$ embeds continuously into $W^{l,q}_{\boldsymbol\delta}(M')$ by inclusion.
\item[{\rm(ii)}] If $k-\frac{m}{p}\geq l+\alpha$ and $\boldsymbol\gamma\geq\boldsymbol\delta$, then $W^{k,p}_{\boldsymbol\gamma}(M')$ embeds continuously into $C^{l,\alpha}_{\boldsymbol\delta}(M')$ by inclusion.
\end{compactenum}
\end{Thm}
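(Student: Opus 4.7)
The plan is to reduce the theorem to the classical Sobolev embedding on a fixed compact set, via localization combined with a dyadic scaling argument near each conical singularity.

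First, decompose $M' = K \cup \bigcup_{i=1}^n S_i$, where $K := M' \setminus \bigcup_i \phi_i(\Sigma_i \times (0,R/2))$ is a compact manifold with boundary on which $\rho$ is bounded away from zero. On $K$ the weighted norms are equivalent to the standard ones, and both (i) and (ii) follow directly from the classical Sobolev embedding theorem on $\overline K$. The real work is on each cone neighborhood $S_i \cong \Sigma_i \times (0,R)$ (identified via $\phi_i$), which I decompose dyadically as $S_i = \bigsqcup_{j \geq 0} A_j$ with $A_j := \Sigma_i \times (r_{j+1}, r_j]$ and $r_j := R \cdot 2^{-j}$.

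For each $j$ I rescale $A_j$ onto the fixed annulus $\tilde A := \Sigma_i \times (1/2, 1]$ by the dilation $(\sigma, r) \mapsto (\sigma, r/r_j)$. Under this dilation the model cone metric satisfies $g_i = r_j^2 \tilde g_i$, where $\tilde g_i := d\tilde r^2 + \tilde r^2 h_i$ is the cone metric on $\tilde A$; by \eq{ConicalSingularity}, the rescaled pull-back $r_j^{-2}\phi_i^* g$ converges to $\tilde g_i$ in every $C^k$-norm on $\tilde A$ as $j \to \infty$. Setting $v(\sigma,\tilde r) := u(\sigma, r_j \tilde r)$ and using $\rho \sim r_j$ on $A_j$, $|\nabla^s u|_g = r_j^{-s}|\tilde\nabla^s v|_{\tilde g}$, $dV_g = r_j^m\,dV_{\tilde g}$, and $d_g = r_j\,\tilde d$, a direct calculation relates the weighted norms on $A_j$ to the standard norms of $v$ on $\tilde A$ up to explicit powers of $r_j$ determined by $\boldsymbol\gamma$ (resp.\ $\boldsymbol\delta$). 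Invoking the classical Sobolev embedding on the fixed domain $\tilde A$ under the exponent hypotheses of (i) or (ii), and using $\boldsymbol\gamma \geq \boldsymbol\delta$ together with $r_j \leq 1$ to absorb the resulting powers of $r_j$, produces for each $j$ a uniform local estimate
\begin{equation*}
\|u\|_{W^{l,q}_{\boldsymbol\delta}(A_j)} \leq C\,\|u\|_{W^{k,p}_{\boldsymbol\gamma}(A_j)} \quad\text{and}\quad \|u\|_{C^{l,\alpha}_{\boldsymbol\delta}(A_j)} \leq C\,\|u\|_{W^{k,p}_{\boldsymbol\gamma}(A_j)},
\end{equation*}
with $C$ independent of $j$. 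To globalize, in (i) I sum the $q$-th powers over $j$, invoking the elementary inequality $(\sum_j a_j^q)^{1/q} \leq (\sum_j a_j^p)^{1/p}$ for $p \leq q$ (the relevant range when $k > l$); in (ii) I take the supremum in $j$. Combining with the estimate on $K$ then gives the claimed continuous inclusions.

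The main obstacle is to verify that the classical Sobolev embedding constant on $\tilde A$ can be taken uniform in $j$. This follows from a compactness argument applied to the family $\{r_j^{-2}\phi_i^* g\}_{j \geq 0} \subset C^\infty(\tilde A)$, which by \eq{ConicalSingularity} is precompact with limit $\tilde g_i$; any finitely many small $j$ for which the rescaled metric is not yet close to $\tilde g_i$ are handled individually since the corresponding annuli lie in a compact subset of $M'$. A secondary technical point is the correct interaction of the injectivity-radius constraint $d_g(x,y) < \delta_g(x)$ in the Hölder seminorm with the rescaling; this works out because $\delta_g(x) \sim r_j$ on $A_j$, so the constraint transforms into the analogous uniform constraint on $\tilde A$.
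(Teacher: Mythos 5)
Your overall strategy -- localize to a compact core plus the cone ends, decompose each end into dyadic annuli, rescale to a fixed annulus where the rescaled metrics converge to the model cone metric by \eq{ConicalSingularity}, apply the classical embedding there with a uniform constant, and reassemble using $\boldsymbol\gamma\geq\boldsymbol\delta$ -- is exactly the scaling argument of Bartnik that the paper itself invokes (it gives no proof beyond the citation), so in spirit you are reproducing the intended proof.

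There is, however, a genuine gap in the globalization step of part (i). You sum the annulus estimates using $(\sum_j a_j^q)^{1/q}\leq(\sum_j a_j^p)^{1/p}$ and justify its applicability by saying $p\leq q$ is ``the relevant range when $k>l$''. That is not what the hypothesis gives: the condition $\frac{1}{p}\leq\frac{1}{q}+\frac{k-l}{m}$ permits $q<p$ whenever $k>l$ (for instance $m=3$, $k=2$, $l=0$, $p=3$, $q=2$), and it even permits $k=l$ with $q<p$. In that regime the $\ell^p\subset\ell^q$ inequality goes the wrong way, and your local estimate $\|u\|_{W^{l,q}_{\boldsymbol\delta}(A_j)}\leq C\,r_j^{\gamma_i-\delta_i}\|u\|_{W^{k,p}_{\boldsymbol\gamma}(A_j)}$ must instead be summed via the discrete H\"older inequality, which requires the geometric factor $r_j^{(\gamma_i-\delta_i)q}$ to be summable, i.e.\ $\gamma_i>\delta_i$ strictly. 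This is not a removable technicality: for $q<p$ and $\boldsymbol\gamma=\boldsymbol\delta$ the embedding is in fact false (take $u=\rho^{\boldsymbol\gamma}\bigl(\log(1/\rho)\bigr)^{-\beta}$ near a singularity with $\frac{1}{p}<\beta\leq\frac{1}{q}$; since $\rho^{-m}\,\mathrm dV_g\sim r^{-1}\,\mathrm dr\,\mathrm dV_h$ this lies in $W^{k,p}_{\boldsymbol\gamma}(M')$ but not in $L^q_{\boldsymbol\gamma}(M')$). So your proof, and indeed the theorem as literally stated, only goes through either for $q\geq p$ (which covers the scale-critical case and is how Bartnik formulates it) or under the strict inequality $\boldsymbol\gamma>\boldsymbol\delta$ when $q<p$; you need to split these cases explicitly rather than assume $p\leq q$.

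A smaller point in part (ii): with the disjoint annuli $A_j=\Sigma_i\times(r_{j+1},r_j]$ the weighted H\"older seminorm is not recovered by taking the supremum over $j$, because admissible pairs $x\neq y$ with $d_g(x,y)<\delta_g(x)$ may lie in adjacent annuli. The standard fix is to run the rescaling on overlapping annuli, e.g.\ $\Sigma_i\times(r_{j+2},r_j)$, so that every such pair lies in a single rescaled domain; with that modification your argument for (ii) is fine.
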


\noindent The proof of the Theorem \ref{WeightedSobolevEmbedding} can be found in Bartnik \cite[Thm. 1.2]{Bartnik} for the case when $(M,g)$ is an asymptotically Euclidean manifold. The proof of the Sobolev Embedding Theorem for weighted spaces on compact Riemannian manifolds with conical singularities is then a simple modification of Bartnik's proof.

\subsection{The Laplace operator on compact Riemannian manifolds with conical singularities. I}
In this subsection we study the Laplace operator acting on weighted H\"older and Sobolev spaces. The presentation of this material mainly follow's Joyce \cite[\S 2]{Joyce}.

Let $(\Sigma,h)$ be a compact and connected $(m-1)$-dimensional Riemannian manifold, $m\geq 1$, and let $(C,g)$ be the Riemannian cone over $(\Sigma,h)$ as in Definition \ref{RiemannianCone}. A function $u:C'\rightarrow\mathbb{R}$ is said to be homogeneous of order $\alpha$, if there exists a function $\varphi:\Sigma\rightarrow\mathbb{R}$, such that $u(\sigma,r)=r^{\alpha}\varphi(\sigma)$ for $(\sigma,r)\in C'$. A straightforward computation shows that the Laplace operator on $C'$ is given by $\Delta_{g}u=\partial_r^2u+(m-1)r^{-1}\partial_ru+r^{-2}\Delta_hu$, and the following lemma is easily verified.

\begin{Lem}\label{HarmonicFunctions}
A homogeneous function $u(\sigma,r)=r^{\alpha}\varphi(\sigma)$ of order $\alpha\in\mathbb{R}$ on $C'$ with $\varphi\in C^2(\Sigma)$ is harmonic if and only if $\Delta_h\varphi=-\alpha(\alpha+m-2)\varphi$. Note that if $u$ is harmonic, then $\varphi\in C^{\infty}(\Sigma)$ by elliptic regularity.
\end{Lem}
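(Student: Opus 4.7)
The plan is a direct computation using the formula $\Delta_g u = \partial_r^2 u + (m-1)r^{-1}\partial_r u + r^{-2}\Delta_h u$ that was just derived in the text. First I would substitute $u(\sigma,r) = r^\alpha \varphi(\sigma)$ into each term: $\partial_r u = \alpha r^{\alpha-1}\varphi$, so $\partial_r^2 u = \alpha(\alpha-1)r^{\alpha-2}\varphi$ and $(m-1)r^{-1}\partial_r u = (m-1)\alpha r^{\alpha-2}\varphi$; and since $\Delta_h$ only differentiates in the $\Sigma$-variables, $r^{-2}\Delta_h u = r^{\alpha-2}\Delta_h \varphi$.

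Adding these and factoring out $r^{\alpha-2}$ gives
\[
\Delta_g u = r^{\alpha-2}\bigl[\alpha(\alpha-1)\varphi + (m-1)\alpha\varphi + \Delta_h\varphi\bigr] = r^{\alpha-2}\bigl[\alpha(\alpha+m-2)\varphi + \Delta_h\varphi\bigr].
\]
Since $r^{\alpha-2}$ never vanishes on $C'$, the equation $\Delta_g u = 0$ is equivalent to $\Delta_h\varphi = -\alpha(\alpha+m-2)\varphi$ pointwise on $\Sigma$, which establishes the stated equivalence.

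For the final assertion, if $u$ is harmonic then $\varphi$ is a $C^2$ eigenfunction of the Laplace--Beltrami operator $\Delta_h$ on the compact manifold $\Sigma$, with eigenvalue $-\alpha(\alpha+m-2)$. Standard elliptic regularity for the uniformly elliptic operator $\Delta_h$ on $\Sigma$ with smooth coefficients then bootstraps $\varphi \in C^\infty(\Sigma)$. No serious obstacle arises; the only subtlety is keeping track of signs and conventions in the cone Laplacian formula, but once that is in hand the lemma follows immediately.
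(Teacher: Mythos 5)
Your computation is correct and is exactly the verification the paper has in mind: the text derives the cone Laplacian formula $\Delta_g u=\partial_r^2u+(m-1)r^{-1}\partial_ru+r^{-2}\Delta_hu$ and then declares the lemma ``easily verified,'' which is precisely your substitution of $u=r^\alpha\varphi$ and the identity $\alpha(\alpha-1)+(m-1)\alpha=\alpha(\alpha+m-2)$, followed by standard elliptic regularity on the compact manifold $\Sigma$ for the smoothness claim. Nothing further is needed.
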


Define
\begin{equation}\label{ExceptionalSet}
\mathcal{D}_{\Sigma}=\{\alpha\in\mathbb{R}\;:\;-\alpha(\alpha+m-2)\mbox{ is an eigenvalue of }\Delta_h\}.
\end{equation}
Then $\mathcal{D}_{\Sigma}$ is a discrete subset of $\mathbb{R}$ with no other accumulation points than $\pm\infty$. Moreover $\mathcal{D}_{\Sigma}\cap(2-m,0)=\emptyset$, since $\Delta_h$ is non-positive, and finally from Lemma \ref{HarmonicFunctions} it follows that $\mathcal{D}_{\Sigma}$ is the set of all $\alpha\in\mathbb{R}$ for which there exists a nonzero homogeneous harmonic function of order $\alpha$ on $C'$. Define a function
\[
m_{\Sigma}:\mathbb{R}\longrightarrow\mathbb{N},\quad m_{\Sigma}(\alpha)=\dim\ker(\Delta_h+\alpha(\alpha+m-2)).
\]
Then $m_\Sigma(\alpha)$ is the multiplicity of the eigenvalue $-\alpha(\alpha+m-2)$. Note that $m_{\Sigma}(\alpha)\neq 0$ if and only if $\alpha\notin\mathcal{D}_{\Sigma}$. Finally we define a function $M_{\Sigma}:\mathbb{R}\rightarrow\mathbb{Z}$ by
\begin{equation*}
M_{\Sigma}(\delta)=-\sum_{\alpha\in\mathcal{D}_{\Sigma}\cap(\delta,0)}m_{\Sigma}(\alpha)\;\mbox{if }\delta<0,\;M_{\Sigma}(\delta)=\sum_{\alpha\in\mathcal{D}_{\Sigma}\cap[0,\delta)}m_{\Sigma}(\alpha)\;\mbox{if }\delta\geq 0.
\end{equation*}
Then $M_{\Sigma}$ is a monotone increasing function that is discontinuous exactly on $\mathcal{D}_{\Sigma}$. As $\mathcal{D}_{\Sigma}\cap(2-m,0)=\emptyset$, we see that $M_{\Sigma}\equiv 0$ on $(2-m,0)$.
The set $\mathcal{D}_{\Sigma}$ and the function $M_{\Sigma}$ play an important r\^{o}le in the Fredholm theory for the Laplace operator on compact Riemannian manifolds with conical singularities, see Theorem \ref{Fredholm} below. 

The next proposition gives the weighted Schauder and $L^p$-estimates for the Laplace operator on compact Riemannian manifolds with conical singularities.

\begin{Prop}\label{SchauderLPLaplace}
Let $(M,g)$ be a compact Riemannian manifold with conical singularities as in Definition \ref{DefinitionConicalSingularities} and $\boldsymbol\gamma\in\mathbb{R}^n$. Let $u,f\in L^1_{\loc}(M')$ and assume that $\Delta_gu=f$ holds in the weak sense. Then the following hold.\vspace{0.17cm}
\begin{compactenum}
\item[{\rm(i)}] Let $k\in\mathbb{N}$ with $k\geq 2$ and $\alpha\in(0,1)$. If $f\in C^{k-2,\alpha}_{\boldsymbol\gamma-2}(M')$ and $u\in C^0_{\boldsymbol\gamma}(M')$, then $u\in C^{k,\alpha}_{\boldsymbol\gamma}(M')$. Moreover there exists a constant $c>0$ independent of $u$ and $f$, such that
\begin{equation}\label{estimate}
\|u\|_{C^{k,\alpha}_{\boldsymbol\gamma}}\leq c\left(\|f\|_{C^{k-2,\alpha}_{\boldsymbol\gamma-2}}+\|u\|_{C^0_{\boldsymbol\gamma}}\right).
\end{equation}
\item[{\rm(ii)}] Let $k\in\mathbb{N}$ with $k\geq 2$ and $p\in(1,\infty)$. If $f\in W^{k-2,p}_{\boldsymbol\gamma-2}(M')$ and $u\in L^p_{\boldsymbol\gamma}(M')$, then $u\in W^{k,p}_{\boldsymbol\gamma}(M')$. Moreover there exists a constant $c>0$ independent of $u$ and $f$, such that
\begin{equation}\label{estimate2}
\|u\|_{W^{k,p}_{\boldsymbol\gamma}}\leq c\left(\|f\|_{W^{k-2,p}_{\boldsymbol\gamma-2}}+\|u\|_{L^p_{\boldsymbol\gamma}}\right).
\end{equation}
\end{compactenum}
\end{Prop}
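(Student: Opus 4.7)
The plan is to combine standard interior elliptic regularity on the regular part $\{\rho\geq\varepsilon\}$ with a dyadic rescaling argument near each conical singularity. Away from the singular points the weight $\rho$ is bounded above and below, so the weighted $C^{k,\alpha}_{\boldsymbol\gamma}$- and $W^{k,p}_{\boldsymbol\gamma}$-norms reduce, up to constants, to the classical ones; a finite cover by coordinate balls together with the classical interior Schauder and Calder\'on--Zygmund estimates then yields \eq{estimate} and \eq{estimate2} on this part, including the gain in regularity from $C^0_{\boldsymbol\gamma}$ (resp.\ $L^p_{\boldsymbol\gamma}$) up to $C^{k,\alpha}_{\boldsymbol\gamma}$ (resp.\ $W^{k,p}_{\boldsymbol\gamma}$).

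For the local estimates near a singularity $x_i$, I use $\phi_i$ to identify a neighbourhood of $x_i$ with $\Sigma_i\times(0,R)$ and cover the latter by dyadic annuli $A_\ell=\Sigma_i\times[r_\ell/2,2r_\ell]$ with $r_\ell=2^{-\ell}R$ for $\ell\geq 0$. On each $A_\ell$ I rescale by the dilation $\Psi_\ell(\sigma,s)=(\sigma,r_\ell s)$, defining $\tilde u_\ell=\Psi_\ell^*u$, $\tilde f_\ell=\Psi_\ell^*f$ and $\tilde g_\ell=r_\ell^{-2}\Psi_\ell^*(\phi_i^*g)$. Since the cone metric satisfies $r_\ell^{-2}\Psi_\ell^*g_i=\mathrm ds^2+s^2h_i$, condition \eq{ConicalSingularity} yields $\|\tilde g_\ell-(\mathrm ds^2+s^2h_i)\|_{C^k(\Sigma_i\times[1/2,2])}=O(r_\ell^{\mu_i})$ uniformly in $\ell$, so the operators $\Delta_{\tilde g_\ell}$ form a family of uniformly elliptic second order operators with uniformly bounded smooth coefficients on the fixed cylinder $\Sigma_i\times[1/2,2]$, and the rescaled equation is $\Delta_{\tilde g_\ell}\tilde u_\ell=r_\ell^2\tilde f_\ell$.

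I then apply the classical interior Schauder estimate for (i), resp.\ the classical interior $L^p$-estimate for (ii), for this uniform family on the smaller cylinder $\Sigma_i\times[3/4,3/2]\subset\Sigma_i\times[1/2,2]$, with constant independent of $\ell$. Translating back, using that one derivative in $s$ corresponds to $r_\ell$ derivatives in $r$, that $\rho\sim r_\ell$ on $A_\ell$, and that the measure $\rho^{-m}\,\mathrm dV_g$ is invariant under the radial dilation (which is precisely what makes $\|\cdot\|_{W^{k,p}_{\boldsymbol\gamma}}$ the correct weighted norm on a cone), the left hand side equals $r_\ell^{\gamma_i}$ times the contribution of a shrunk version of $A_\ell$ to $\|u\|_{C^{k,\alpha}_{\boldsymbol\gamma}}$ or $\|u\|_{W^{k,p}_{\boldsymbol\gamma}}$, the first term on the right hand side equals $r_\ell^{\gamma_i}$ times the corresponding contribution of $A_\ell$ to $\|f\|_{C^{k-2,\alpha}_{\boldsymbol\gamma-2}}$ or $\|f\|_{W^{k-2,p}_{\boldsymbol\gamma-2}}$, and the $\tilde u_\ell$ term is controlled by $\|u\|_{C^0_{\boldsymbol\gamma}}$ or $\|u\|_{L^p_{\boldsymbol\gamma}}$. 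Dividing by $r_\ell^{\gamma_i}$ and then taking the supremum in (i), resp.\ the $\ell^p$-sum in (ii), over $\ell$, summing over $i=1,\ldots,n$, and combining with the interior estimate on $\{\rho\geq\varepsilon\}$, gives the proposition.

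The main technical obstacle is the bookkeeping of scaling factors so that they line up exactly with the weighted norms; the fact that they do is essentially the reason why the weights $\rho^{-\boldsymbol\gamma+j}$ (and the volume factor $\rho^{-m}$ for $L^p$) are built into the definitions of $C^k_{\boldsymbol\gamma}$ and $W^{k,p}_{\boldsymbol\gamma}$. A secondary subtlety arises for the weighted H\"older seminorm in (i): one has to check that the restriction $d(x,y)<\delta_g(x)$ in the definition of $[\cdot]_{\alpha,\boldsymbol\gamma}$ ensures that the relevant H\"older differences all lie on a single annulus, or on two adjacent annuli, so that the rescaled Schauder estimate on one cylinder still captures them.
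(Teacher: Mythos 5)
Your proposal is correct, and it is the standard argument for these weighted estimates: the paper itself gives no proof of Proposition \ref{SchauderLPLaplace}, referring instead to Marshall's thesis, but your dyadic rescaling scheme is essentially the same scaling technique the paper does carry out for the parabolic analogue, Proposition \ref{WeightedLpEstimates}, where a continuous family of dilations $\delta_i^s$ is used instead of dyadic annuli and the deviation from the cone metric enters through an error operator $L_i^s$ whose coefficients vanish as $s\rightarrow 0$ by \eq{ConicalSingularity}. The one detail you flag is indeed the only point needing care: near a cone point $\delta_g(x)$ is comparable to $\rho(x)$, so the constraint $d(x,y)<\delta_g(x)$ confines the H\"older difference quotients to a bounded number of adjacent annuli, which the overlapping rescaled cylinders $\Sigma_i\times[1/2,2]$ capture with constants uniform in $\ell$.
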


\noindent A proof of Proposition \ref{SchauderLPLaplace} can be found in Marshall's thesis \cite[Thm. 4.21]{Marshall}.

The next theorem is the main Fredholm theorem for the Laplace operator on compact Riemannian manifolds with conical singularities. 

\begin{Thm}\label{Fredholm}
Let $(M,g)$ be a compact $m$-dimensional Riemannian manifold with conical singularities as in Definition \ref{DefinitionConicalSingularities}, $m\geq 3$, and $\boldsymbol\gamma\in\mathbb{R}^n$. Then the following hold.\vspace{0.17cm}
\begin{compactenum}
\item[{\rm(i)}] Let $k\in\mathbb{N}$ with $k\geq 2$ and $\alpha\in(0,1)$. Then
\begin{equation}\label{p1}
\Delta_g:C^{k,\alpha}_{\boldsymbol\gamma}(M')\rightarrow C^{k-2,\alpha}_{\boldsymbol\gamma-2}(M')
\end{equation}
is a Fredholm operator if and only if $\gamma_i\notin\mathcal{D}_{\Sigma_i}$ for $i=1,\ldots,n$. If $\gamma_i\notin\mathcal{D}_{\Sigma_i}$ for $i=1,\ldots,n$, then the Fredholm index of \eq{p1} is equal to $-\sum_{i=1}^nM_{\Sigma_i}(\gamma_i)$. 
\item[{\rm(ii)}] Let $k\in\mathbb{N}$ with $k\geq 2$ and $p\in(1,\infty)$. Then
\begin{equation}\label{p2}
\Delta_g:W^{k,p}_{\boldsymbol\gamma}(M')\rightarrow W^{k-2,p}_{\boldsymbol\gamma-2}(M')
\end{equation}
is a Fredholm operator if and only if $\gamma_i\notin\mathcal{D}_{\Sigma_i}$ for $i=1,\ldots,n$. If $\gamma_i\notin\mathcal{D}_{\Sigma_i}$ for $i=1,\ldots,n$, then the Fredholm index of \eq{p2} is equal to $-\sum_{i=1}^nM_{\Sigma_i}(\gamma_i)$.\vspace{0.17cm}
\end{compactenum}
Furthermore the kernel of the operators \eq{p1} and \eq{p2} is constant in $\boldsymbol\gamma\in\mathbb{R}^n$ on the connected components of $(\mathbb{R}\backslash\mathcal{D}_{\Sigma_1})\times\cdots\times(\mathbb{R}\backslash\mathcal{D}_{\Sigma_n})$. 
\end{Thm}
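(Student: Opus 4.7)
The plan is to reduce the analysis to each model Riemannian cone and then build a global parametrix by gluing. Choose a smooth partition of unity $\chi_0,\chi_1,\ldots,\chi_n$ on $M'$ with $\chi_i$ supported in $S_i$ for $i\geq 1$ and $\chi_0$ supported away from the singularities, and use the diffeomorphisms $\phi_i:\Sigma_i\times(0,R)\to S_i$ to decompose
$\Delta_g=\chi_0\Delta_g+\sum_{i=1}^n\chi_i(\phi_i^{-1})^*\Delta_{g_i}\phi_i^*+E$,
where $E$ is the error arising from $\phi_i^*g-g_i=O(r^{\mu_i})$ with $\mu_i>0$. On the weighted scales of Proposition \ref{SchauderLPLaplace}, the extra decay of $E$ relative to $\Delta_g$ makes it a compact perturbation: it factors through $C^{k-2,\alpha}_{\boldsymbol\gamma+\boldsymbol\mu-2}(M')$, and the inclusion of that space into $C^{k-2,\alpha}_{\boldsymbol\gamma-2}(M')$ is compact by a Rellich-type argument based on \eq{estimate} (similarly in the Sobolev scale). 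Consequently Fredholmness and the index reduce to the model operators plus a classical interior contribution.

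On each model cone $C_i'=\Sigma_i\times(0,\infty)$ with metric $g_i=dr^2+r^2h_i$, expand $u$ along an orthonormal eigenbasis $\{\varphi_{i,k}\}$ of $\Delta_{h_i}$ with eigenvalues $-\lambda_{i,k}\leq 0$. The equation $\Delta_{g_i}u=f$ then decouples into Euler-type ODEs $u_k''+(m-1)r^{-1}u_k'-\lambda_{i,k}r^{-2}u_k=f_k$ whose indicial roots $\alpha$ satisfy $\alpha(\alpha+m-2)=\lambda_{i,k}$, i.e.\ are exactly the elements of $\mathcal{D}_{\Sigma_i}$ attached to mode $k$. Equivalently, the Mellin transform in $r$ converts $\Delta_{g_i}$ into a holomorphic family of elliptic operators on $\Sigma_i$ whose non-invertibility set is exactly $\mathcal{D}_{\Sigma_i}$, so shifting the contour to the line associated with the weight $\gamma_i$ yields a bounded right inverse precisely when $\gamma_i\notin\mathcal{D}_{\Sigma_i}$. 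Inverting mode by mode and patching with an interior parametrix produces a global operator $P$ with $\Delta_gP-\mathrm{id}$ and $P\Delta_g-\mathrm{id}$ compact, so $\Delta_g$ is Fredholm. Conversely, when $\gamma_i\in\mathcal{D}_{\Sigma_i}$, the cut-offs of model solutions $r^\alpha\varphi$ for $\alpha$ ranging through a neighbourhood of $\gamma_i$ form a Weyl-type sequence that obstructs the closed-range property.

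For constancy of the kernel on a connected component of $(\mathbb{R}\setminus\mathcal{D}_{\Sigma_1})\times\cdots\times(\mathbb{R}\setminus\mathcal{D}_{\Sigma_n})$, take $\boldsymbol\gamma\leq\boldsymbol\gamma'$ in the same component; iterating the parametrix on any $u\in\ker\Delta_g\cap C^{k,\alpha}_{\boldsymbol\gamma'}$ produces a polyhomogeneous expansion $u\sim\sum_i\sum_{\alpha\in\mathcal{D}_{\Sigma_i}\cap[\gamma_i,\gamma_i')}r^\alpha\varphi_{i,\alpha}$ modulo $C^{k,\alpha}_{\boldsymbol\gamma}$, and every such sum is empty by the component hypothesis, so $u\in C^{k,\alpha}_{\boldsymbol\gamma}$. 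For the index formula, I would invoke the Lockhart--McOwen relative index theorem (or derive it from the parametrix construction): crossing a single exceptional value $\alpha\in\mathcal{D}_{\Sigma_i}$ in the $i$-th coordinate from below to above changes the index by $-m_{\Sigma_i}(\alpha)$. Combining with a base-case computation at $\boldsymbol\gamma_0\in(2-m,0)^n$, where $M_{\Sigma_i}\equiv 0$ and a direct argument via the removable-singularity theorem together with duality Proposition \ref{Duality} yields $\mathrm{index}(\boldsymbol\gamma_0)=0$, one deduces $\mathrm{index}=-\sum_iM_{\Sigma_i}(\gamma_i)$ in general.

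The main obstacle I anticipate is proving the Hölder version of the local parametrix, namely that the formal Mellin inversion gives a bounded operator between weighted Hölder spaces and that the mode-by-mode series converges in the Hölder topology. A pragmatic workaround is to establish the Sobolev case (ii) first via the Mellin/$L^2$ formalism, then deduce the Hölder case (i) for large $p$ by invoking the weighted Sobolev embedding Theorem \ref{WeightedSobolevEmbedding}(ii) and bootstrapping regularity with the Schauder estimate \eq{estimate}. A secondary technicality is the rigorous justification of the relative-index jump formula, which requires the polyhomogeneous expansion above, obtained by iteratively applying the parametrix to peel off leading model asymptotic terms and correcting by solutions at the previous weight.
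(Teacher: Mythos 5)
The paper does not prove Theorem \ref{Fredholm} at all: it quotes it from Lockhart--McOwen \cite[Thm.\ 6.1]{LM} (Sobolev case) and Marshall \cite[Thm.\ 6.9]{Marshall} (H\"older case deduced from the Sobolev case). Your outline is essentially a reconstruction of that same route --- Mellin/indicial analysis on each model cone to get a right inverse off the exceptional set $\mathcal{D}_{\Sigma_i}$, a gluing/parametrix argument for Fredholmness, a relative-index jump of $-m_{\Sigma_i}(\alpha)$ across each indicial root combined with the index-zero base case on $(2-m,0)^n$, and finally the H\"older statement obtained from the Sobolev one via Theorem \ref{WeightedSobolevEmbedding} and the Schauder estimate \eq{estimate}, which is exactly Marshall's reduction. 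So in spirit you are reproving the cited results rather than offering a genuinely different argument, and the overall architecture is sound.

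One step, however, fails as you state it: the claim that the error term $E$ (coming from $\phi_i^*(g)-g_i=O(r^{\mu_i})$) is a compact perturbation because it factors through $C^{k-2,\alpha}_{\boldsymbol\gamma+\boldsymbol\mu-2}(M')$ and ``the inclusion of that space into $C^{k-2,\alpha}_{\boldsymbol\gamma-2}(M')$ is compact''. An inclusion between weighted H\"older (or Sobolev) spaces of the \emph{same} differentiability and H\"older exponent, gaining only in the weight, is not compact: a bounded sequence supported in a fixed compact set away from the singularities is unaffected by the weight and has no convergent subsequence in the same $C^{k-2,\alpha}$ topology (the weight gain controls only the tails near the cone points). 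Since $E$ is second order, it produces no gain in regularity either, so $E$ itself need not be compact from $C^{k,\alpha}_{\boldsymbol\gamma}$ to $C^{k-2,\alpha}_{\boldsymbol\gamma-2}$. The standard repair is not to make $E$ compact but to run the parametrix through an a priori estimate of the form $\|u\|_{C^{k,\alpha}_{\boldsymbol\gamma}}\leq c\bigl(\|\Delta_gu\|_{C^{k-2,\alpha}_{\boldsymbol\gamma-2}}+\|u\|_{C^{0}_{\boldsymbol\gamma'}}\bigr)$ with $\boldsymbol\gamma'<\boldsymbol\gamma$, where the embedding $C^{k,\alpha}_{\boldsymbol\gamma}(M')\hookrightarrow C^{0}_{\boldsymbol\gamma'}(M')$ \emph{is} compact because one loses both derivatives and weight; this yields finite-dimensional kernel and closed range, with the cokernel handled by duality (Proposition \ref{Duality}), and it is here that the cone parametrix for weights avoiding $\mathcal{D}_{\Sigma_i}$ enters. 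A smaller slip: in your kernel-constancy argument the roles of $\boldsymbol\gamma$ and $\boldsymbol\gamma'$ are interchanged --- one starts with $u$ in the kernel at the \emph{smaller} weight and uses the absence of exponents of $\mathcal{D}_{\Sigma_i}$ in $[\gamma_i,\gamma_i']$ to promote it to the larger weight; as written the statement is vacuous. Both points are repairable by standard arguments, but as written they are gaps.
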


\noindent The proof of Theorem \ref{Fredholm} can be found in Lockhart and McOwen \cite[Thm. 6.1]{LM} and in Marshall \cite[Thm. 6.9]{Marshall}. In fact, Lockhart and McOwen prove the second part of Theorem \ref{Fredholm} for the Laplace operator acting on weighted Sobolev spaces and Marshall deduces the first part of Theorem \ref{Fredholm} for the Laplace operator acting on weighted H\"older spaces from the results of Lockhart and McOwen.

The following proposition is a simple consequence of Proposition \ref{Duality} and Theorem \ref{Fredholm}.

\begin{Prop}\label{Cokernel}
Let $(M,g)$ be a compact $m$-dimensional Riemannian manifold with conical singularities as in Definition \ref{DefinitionConicalSingularities}, $m\geq 3$. Let $k\in\mathbb{N}$ with $k\geq 2$, $p,q\in(1,\infty)$ with $\frac{1}{p}+\frac{1}{q}=1$, and $\boldsymbol\gamma\in\mathbb{R}^n$ with $\gamma_i\notin\mathcal{D}_{\Sigma_i}$ for $i=1,\ldots,n$. Then \eq{p2} is a Fredholm operator and its cokernel is isomorphic to the kernel of the operator $\Delta_g:W^{k,q}_{2-m-\boldsymbol\gamma}(M')\rightarrow W^{k-2,q}_{-m-\boldsymbol\gamma}(M')$.
\end{Prop}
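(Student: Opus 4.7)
Abbreviate $\ker_i$ and $\coker_i$ for $i=1,2$ for the kernel and cokernel of the first and second operator in the statement, respectively. The plan is to construct the isomorphism $\coker_1\cong\ker_2$ explicitly via the $L^2$ pairing of Proposition~\ref{Duality}, combined with integration by parts and the Fredholm index identity provided by Theorem~\ref{Fredholm}.

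I would first define a candidate linear map $\Phi:\ker_2\to (W^{k-2,p}_{\boldsymbol\gamma-2}(M'))^*$ by $\Phi(v)(f)=\int_{M'}fv\,\mathrm dV_g$. By Proposition~\ref{Duality} (applied with weight $\boldsymbol\gamma-2$) the $L^2$ pairing $L^p_{\boldsymbol\gamma-2}(M')\times L^q_{2-m-\boldsymbol\gamma}(M')\to\mathbb R$ is continuous, and since $W^{k-2,p}_{\boldsymbol\gamma-2}(M')$ includes continuously into $L^p_{\boldsymbol\gamma-2}(M')$, $\Phi(v)$ is a bounded functional. To see that $\Phi(v)$ annihilates the image of $\Delta_g$, first take $u\in C^\infty_{\cs}(M')$: integration by parts gives $\int_{M'}(\Delta_g u)v\,\mathrm dV_g=\int_{M'}u(\Delta_g v)\,\mathrm dV_g=0$ because $\Delta_g v=0$. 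By the density of $C^\infty_{\cs}(M')$ in $W^{k,p}_{\boldsymbol\gamma}(M')$ recalled in \S\ref{WeightedSpaces}, together with the continuity of $\Delta_g$ and of $\Phi(v)$, this passes to all $u\in W^{k,p}_{\boldsymbol\gamma}(M')$. Hence $\Phi$ descends to a linear map $\ker_2\to\coker_1^*$, whose target has the same dimension as $\coker_1$. Injectivity is automatic: if $\Phi(v)=0$, then $\int fv\,\mathrm dV_g=0$ for every $f\in C^\infty_{\cs}(M')\subset W^{k-2,p}_{\boldsymbol\gamma-2}(M')$, and density of $C^\infty_{\cs}(M')$ in $L^p_{\boldsymbol\gamma-2}(M')$ combined with the non-degeneracy of the pairing forces $v=0$.

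The real work is surjectivity, which I would obtain abstractly rather than via a regularity argument. Swapping the roles of $(\boldsymbol\gamma,p)$ and $(2-m-\boldsymbol\gamma,q)$, the same construction produces an injective linear map $\Phi':\ker_1\to\coker_2^*$, and the two injectivity statements give $\dim\ker_2\leq\dim\coker_1$ and $\dim\ker_1\leq\dim\coker_2$. It remains to show that the two Fredholm indices sum to zero. The key input is the symmetry of $\mathcal{D}_{\Sigma_i}$: for each eigenvalue $\lambda$ of $\Delta_{h_i}$, the two roots of $\alpha(\alpha+m-2)=-\lambda$ sum to $2-m$, so $\alpha\in\mathcal{D}_{\Sigma_i}\Leftrightarrow 2-m-\alpha\in\mathcal{D}_{\Sigma_i}$ with $m_{\Sigma_i}(\alpha)=m_{\Sigma_i}(2-m-\alpha)$. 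Using $\mathcal{D}_{\Sigma_i}\cap(2-m,0)=\emptyset$, a short case analysis on the position of $\gamma_i$ yields $M_{\Sigma_i}(\gamma_i)+M_{\Sigma_i}(2-m-\gamma_i)=0$, and therefore by Theorem~\ref{Fredholm} $\dim\ker_1+\dim\ker_2=\dim\coker_1+\dim\coker_2$. Together with the two dimension inequalities this forces equality throughout, the injection $\Phi$ is in fact a bijection, and $\coker_1\cong\ker_2$.

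The main obstacle I anticipate is the bookkeeping identity $M_{\Sigma_i}(\gamma_i)+M_{\Sigma_i}(2-m-\gamma_i)=0$, which must be checked case by case according to the position of $\gamma_i$ relative to $0$ and $2-m$. Avoiding a direct surjectivity argument is attractive here, as such an argument would require a negative-order elliptic regularity statement going beyond the scope of Proposition~\ref{SchauderLPLaplace}.
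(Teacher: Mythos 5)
Your argument is correct, and it uses exactly the two ingredients the paper itself cites for this statement (the paper gives no written proof, saying only that the proposition is "a simple consequence of Proposition \ref{Duality} and Theorem \ref{Fredholm}"); your proposal is a legitimate, self-contained realization of that one-line justification. The distinctive feature of your route is the double-injection device: pairing $\ker\{\Delta_g$ on $W^{k,q}_{2-m-\boldsymbol\gamma}\}$ against the quotient via $\langle\cdot,\cdot\rangle_{L^2}$ (using $W^{k-2,p}_{\boldsymbol\gamma-2}(M')\hookrightarrow L^p_{\boldsymbol\gamma-2}(M')$, density of $C^{\infty}_{\cs}(M')$, and integration by parts) gives the two inequalities $\dim\ker_2\leq\dim\coker_1$ and $\dim\ker_1\leq\dim\coker_2$, and the cancellation of the two Fredholm indices upgrades these to equalities, so surjectivity is never proved directly. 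This is a genuine gain: the naive direct identification of $\coker_1$ with $\ker_2$ would have to show that an arbitrary functional in $(W^{k-2,p}_{\boldsymbol\gamma-2}(M'))^*$ annihilating the image is represented by a function with the right weighted integrability, which for $k>2$ requires regularity of distributional (negative-order) solutions and is indeed not covered by Proposition \ref{SchauderLPLaplace} as stated. Your key computation is also right: the roots of $\alpha(\alpha+m-2)=-\lambda$ sum to $2-m$, so $\alpha\mapsto 2-m-\alpha$ preserves $\mathcal{D}_{\Sigma_i}$ and the multiplicities $m_{\Sigma_i}$; this both guarantees $2-m-\gamma_i\notin\mathcal{D}_{\Sigma_i}$ (so Theorem \ref{Fredholm} applies to the second operator, a point worth stating explicitly) and, together with $\mathcal{D}_{\Sigma_i}\cap(2-m,0)=\emptyset$, yields $M_{\Sigma_i}(\gamma_i)+M_{\Sigma_i}(2-m-\gamma_i)=0$ in all three cases $\gamma_i\in(2-m,0)$, $\gamma_i>0$, $\gamma_i<2-m$, hence $\index_1+\index_2=0$ as you claim.
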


As before let $(\Sigma,h)$ be a compact and connected $(m-1)$-dimensional Riemannian manifold, $m\geq 1$, and $(C,g)$ the Riemannian cone over $(\Sigma,h)$. Define
\[
\mathcal{E}_{\Sigma}=\mathcal{D}_{\Sigma}\cup\left\{\beta\in\mathbb{R}\;:\;\beta=\alpha+2k\mbox{ for }\alpha\in\mathcal{D}_{\Sigma},\;k\in\mathbb{N}\mbox{ with }\alpha\geq 0\mbox{ and }k\geq 1\right\}
\]
and a function $n_{\Sigma}:\mathbb{R}\longrightarrow\mathbb{N}$ by
\[
n_{\Sigma}(\beta)=m_{\Sigma}(\beta)+\sum_{k\geq 1,\;2k\leq\beta}m_{\Sigma}(\beta-2k).
\]
Clearly if $\beta\notin\mathcal{E}_{\Sigma}$, then $n_{\Sigma}(\beta)=0$. Also note that if $\beta<2$, then $n_{\Sigma}(\beta)=m_{\Sigma}(\beta)$. Moreover, if $\beta\in\mathcal{E}_{\Sigma}$, then $n_{\Sigma}(\beta)$ counts the multiplicity of the eigenvalues 
\[
-\beta(\beta+m-2),-(\beta-2)((\beta-2)+m-2),\ldots,-(\beta-2k)((\beta-2k)+m-2)
\]
for $2k\leq\beta$. Finally we define a function $N_{\Sigma}:\mathbb{R}\longrightarrow\mathbb{N}$ by
\begin{equation}\label{DEFN}
N_{\Sigma}(\delta)=-\sum_{\beta\in\mathcal{D}_{\Sigma}\cap(\delta,0)}n_{\Sigma}(\beta)\;\mbox{if }\delta<0,\;N_{\Sigma}(\delta)=\sum_{\beta\in\mathcal{D}_{\Sigma}\cap[0,\delta)}n_{\Sigma}(\beta)\;\mbox{if }\delta\geq 0.
\end{equation}
Then $N_{\Sigma}(\delta)=M_{\Sigma}(\delta)$ for $\delta\leq 2$ and
\begin{equation}\label{indexxx}
M_{\Sigma}(\delta)=N_{\Sigma}(\delta)-N_{\Sigma}(\delta-2)\quad\mbox{for }\delta\in\mathbb{R}\mbox{ with }\delta>2.
\end{equation}
The set $\mathcal{E}_{\Sigma}$ and the function $N_{\Sigma}$ play a similar r\^{o}le in the study of the heat equation on compact Riemannian manifolds with conical singularities as $\mathcal{D}_{\Sigma}$ and $M_{\Sigma}$ do in the study of the Laplace operator, see Theorems \ref{HoelderRegularityHeat} and \ref{SobolevRegularityHeat} below.

\subsection{Weighted H\"older and Sobolev spaces with discrete asymptotics}
In this subsection we first explain the construction of discrete asymptotics on Riemannian manifolds with conical singularities and then define weighted H\"older and Sobolev spaces with discrete asymptotics. The notion of discrete asymptotics in our specific setting appears to be new. There is however a strong similarity between our definition of discrete asymptotics and the index sets for polyhomogeneous conormal distributions considered by Melrose \cite[Ch. 5, \S 10]{Melrose1} and the asymptotic types considered by Schulze \cite[Ch. 2, \S 3]{Schulze}. 

We first explain our motivation for the introduction of discrete asymptotics. If $(M,g)$ is a compact $m$-dimensional Riemannian manifold with conical singularities as in Definition \ref{DefinitionConicalSingularities}, $m\geq 3$, and $\boldsymbol\gamma\in\mathbb{R}^n$ with $\gamma_i\notin\mathcal{D}_{\Sigma_i}$ for $i=1,\ldots,n$, then for every $k\in\mathbb{N}$ with $k\geq 2$ and $p\in(1,\infty)$, $\Delta_g:W^{k,p}_{\boldsymbol\gamma}(M')\rightarrow W^{k-2,p}_{\boldsymbol\gamma-2}(M')$ is a Fredholm operator by Theorem \ref{Fredholm}. If $\boldsymbol\gamma>0$, then it follows from Theorem \ref{Fredholm} that the Fredholm index of $\Delta_g:W^{k,p}_{\boldsymbol\gamma}(M')\rightarrow W^{k-2,p}_{\boldsymbol\gamma-2}(M')$ is negative, so $\Delta_g:W^{k,p}_{\boldsymbol\gamma}(M')\rightarrow W^{k-2,p}_{\boldsymbol\gamma-2}(M')$ has a cokernel. The main idea behind our definition of discrete asymptotics is to enlarge the spaces $W^{k,p}_{\boldsymbol\gamma}(M')$ and $W^{k-2,p}_{\boldsymbol\gamma-2}(M')$ by finite dimensional spaces of functions that decay slower than $\rho^{\boldsymbol\gamma}$ and $\rho^{\boldsymbol\gamma-2}$, respectively, and that cancel the cokernel of $\Delta_g:W^{k,p}_{\boldsymbol\gamma}(M')\rightarrow W^{k-2,p}_{\boldsymbol\gamma-2}(M')$. More precisely our goal is to construct two finite dimensional spaces $V_1$ and $V_2$ consisting of functions that decay slower than $\rho^{\boldsymbol\gamma}$ and $\rho^{\boldsymbol\gamma-2}$, respectively, and $V_2\subset V_1$ such that the Laplace operator maps $W^{k,p}_{\boldsymbol\gamma}(M')\oplus V_1$ into $W^{k-2,p}_{\boldsymbol\gamma-2}(M')\oplus V_2$ and 
\begin{equation}\label{llkko}
\index\left\{\Delta_g:W^{k,p}_{\boldsymbol\gamma}(M')\oplus V_1\longrightarrow W^{k-2,p}_{\boldsymbol\gamma-2}(M')\oplus V_2\right\}=0
\end{equation}

We begin with the construction of the model space for the discrete asymptotics. Let $(\Sigma,h)$ be a compact and connected $(m-1)$-dimensional Riemannian manifold, $m\geq 1$, and let $(C,g)$ be the Riemannian cone over $(\Sigma,h)$. For $\gamma\in\mathbb{R}$ we denote
\[
H_{\gamma}(C')=\Span\left\{u=r^{\alpha}\varphi\;:\;0\leq\alpha<\gamma,\;\varphi\in C^{\infty}(\Sigma),\;u\mbox{ is harmonic}\right\},
\]
which is the space of homogeneous harmonic functions of order $\alpha$ with $0\leq\alpha<\gamma$. Then $\dim H_{\gamma}(C')=M_{\Sigma}(\gamma)$ for $\gamma\geq 2-m$, so $H_{\gamma}(C')$ is at least one dimensional for $\gamma>0$. We define a finite dimensional vector space $V_{\mathsf{P}_{\gamma}}(C')$ by
\[
V_{\mathsf{P}_{\gamma}}(C')=\Span\left\{v=r^{2k}u\;:\;k\in\mathbb{N},\;u=r^{\alpha}\varphi\in H_{\gamma}(C')\mbox{ and }\alpha+2k<\gamma\right\}.
\]
Note that the Laplace operator on $C'$ maps $V_{\mathsf{P}_{\gamma}}(C')\rightarrow V_{\mathsf{P}_{\gamma-2}}(C')$ for every $\gamma\in\mathbb{R}$ and is a nilpotent map $V_{\mathsf{P}_{\gamma}}(C')\rightarrow V_{\mathsf{P}_{\gamma}}(C')$. Also note that $\dim V_{\mathsf{P}_{\gamma}}(C')=N_{\Sigma}(\gamma)$ and $V_{\mathsf{P}_{\gamma}}(C')=H_{\gamma}(C')$ for $\gamma\leq 2$. The space $V_{\mathsf{P}_{\gamma}}(C')$ serves as the model space in the definition of discrete asymptotics on general Riemannian manifolds with conical singularities. 

The definition of discrete asymptotics on compact Riemannian manifolds with conical singularities is based on the following proposition.

\begin{Prop}\label{Asymptotics}
Let $(M,g)$ be a compact $m$-dimensional Riemannian manifold with conical singularities as in Definition \ref{DefinitionConicalSingularities}, $m\geq 3$, and $\boldsymbol\gamma\in\mathbb{R}^n$. Then for every $\varepsilon>0$ there exists a linear map
\begin{equation*}\label{MAP}
\Psi_{\boldsymbol\gamma}:\bigoplus_{i=1}^nV_{\mathsf{P}_{\gamma_i}}(C_i')\longrightarrow C^{\infty}(M'),
\end{equation*}
such that the following hold.\vspace{0.17cm}
\begin{compactenum}
\item[{\rm(i)}] For every $v\in\bigoplus_{i=1}^nV_{\mathsf{P}_{\gamma_i}}(C_i')$ with $v=(v_1,\ldots,v_n)$ and $v_i=r^{\beta_i}\varphi_i$ where $\varphi_i\in C^{\infty}(\Sigma_i)$ for $i=1,\ldots,n$ we have
\[
|\nabla^k(\phi_i^*(\Psi_{\boldsymbol\gamma}(v))-v_i)|=O(r^{\mu_i-\varepsilon+\beta_i-k})\quad\mbox{as }r\longrightarrow 0\mbox{ for }k\in\mathbb{N}
\]
and $i=1,\ldots,n$.
\item[{\rm(ii)}] For every $v\in\bigoplus_{i=1}^nV_{\mathsf{P}_{\gamma_i}}(C_i')$ with $v=(v_1,\ldots,v_n)$ we have 
\[
\Delta_g(\Psi_{\boldsymbol\gamma}(v))-\sum_{i=0}^n\Psi_{\boldsymbol\gamma}(\Delta_{g_i}v_i)\in C^{\infty}_{\cs}(M').
\]
\end{compactenum}
\end{Prop}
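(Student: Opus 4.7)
The plan is to construct $\Psi_{\boldsymbol\gamma}$ as a sum of cutoff-localized lifts of the model functions, one per singularity. Fix smooth cutoffs $\chi_i$ on $\Sigma_i\times(0,R)$ with $\chi_i\equiv 1$ on $\Sigma_i\times(0,R/3)$ and $\mathrm{supp}(\chi_i)\subset\Sigma_i\times(0,2R/3)$, and set
\[
\Psi_{\boldsymbol\gamma}(v)=\sum_{i=1}^{n}(\phi_i)_{*}\bigl(\chi_i\,\mathcal{E}_i(v_i)\bigr),
\]
where $\mathcal{E}_i:V_{\mathsf{P}_{\gamma_i}}(C_i')\to C^{\infty}(\Sigma_i\times(0,R))$ is a linear operator with the two properties (a) $\mathcal{E}_i(v)-v$ is polyhomogeneous of leading order at least $\beta_v+\mu_i-\varepsilon$ in every $C^k$ (where $\beta_v$ denotes the leading order of $v$), and (b) the intertwining $\Delta_{\phi_i^*(g)}\mathcal{E}_i(v)=\mathcal{E}_i(\Delta_{g_i}v)$ holds identically on $\Sigma_i\times(0,R/3)$. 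Property (i) then follows from (a) where $\chi_i\equiv 1$, while property (ii) reduces to the commutators $[\Delta_g,\chi_i]\mathcal{E}_i(v_i)$, whose support lies in the compact annular region $\Sigma_i\times[R/3,2R/3]\subset M'$.

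The operator $\mathcal{E}_i$ is built by induction on the nilpotent height of $v$ in $V_{\mathsf{P}_{\gamma_i}}(C_i')$ under $\Delta_{g_i}$. For $\Delta_{g_i}$-harmonic $v$, I need $\tilde v=v+w$ with $\Delta_{\phi_i^*(g)}\tilde v=0$ near $r=0$; the inductive step requires $\Delta_{\phi_i^*(g)}\tilde v=\mathcal{E}_i(\Delta_{g_i}v)$, with the right-hand side already constructed. Both cases reduce to solving $\Delta_{\phi_i^*(g)}w=f$ locally for a polyhomogeneous source $f$ of leading order $\beta_v+\mu_i-2$. Splitting $\Delta_{\phi_i^*(g)}=\Delta_{g_i}+P$ with $P$ a second-order operator whose coefficients are $O(r^{\mu_i})$, I invert $\Delta_{g_i}$ order by order via the explicit cone formula $\Delta_{g_i}(r^{\beta+2}\tilde\varphi)=r^{\beta}\bigl((\beta+2)(\beta+m)\tilde\varphi+\Delta_h\tilde\varphi\bigr)$: the indicial operator on the right is invertible on the eigenspaces of $\Delta_h$ whenever $\beta+2\notin\mathcal{D}_{\Sigma_i}$, and at resonant exponents $\beta+2\in\mathcal{D}_{\Sigma_i}$ one must introduce a $\log r$ factor, which is precisely the source of the $\varepsilon$ loss in (a). Iterating to cancel the $P$-perturbation generates a formal polyhomogeneous series, and Borel summation yields a smooth $w$ for which $\Delta_{\phi_i^*(g)}(v+w)-\mathcal{E}_i(\Delta_{g_i}v)$ is smooth and $O(r^\infty)$ at $r=0$.

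The crucial final step is upgrading this $O(r^\infty)$ residual to \emph{exactly} zero on $\Sigma_i\times(0,R/3)$, since a rapidly decaying smooth function is not in general compactly supported in $M'$. For this I invoke Theorem \ref{Fredholm}: after cutting off the residual at $r=2R/3$, it lies in every weighted Sobolev space $W^{k-2,p}_{\boldsymbol\delta-2}(M')$, and for $\boldsymbol\delta$ large and avoiding $\mathcal{D}_{\Sigma_j}$ for each $j$, $\Delta_g$ is Fredholm between weighted Sobolev spaces, with finite-dimensional cokernel represented by harmonic functions in the dual weighted space (Proposition \ref{Cokernel}). By unique continuation these harmonic functions restrict to linearly independent functions on the open annulus $\Sigma_i\times[R/3,2R/3]$, so I can modify the residual by a smooth function compactly supported in that annulus to cancel its cokernel projection; the modified residual is then $\Delta_g u$ for a smooth $u$ of arbitrary decay, by Fredholm solvability together with Proposition \ref{SchauderLPLaplace} for elliptic regularity. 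Replacing $w$ by $w-u$ preserves the asymptotic expansion on $\Sigma_i\times(0,R/3)$ and achieves the exact intertwining required in (b). The principal obstacle is exactly this passage from an infinite-order formal intertwining to a genuine local one; once it is in place, (i) and (ii) follow directly from the construction.
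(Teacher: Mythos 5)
There is a genuine gap in the central construction of your extension operators $\mathcal{E}_i$. You split $\Delta_{\phi_i^*(g)}=\Delta_{g_i}+P$ and propose to cancel the perturbation by inverting $\Delta_{g_i}$ ``order by order'' on a formal polyhomogeneous series, finishing with Borel summation to get an $O(r^\infty)$ residual. But Definition \ref{DefinitionConicalSingularities} only assumes the decay estimates \eq{ConicalSingularity}, i.e. $|\nabla^k(\phi_i^*(g)-g_i)|=O(r^{\mu_i-k})$; the coefficients of $P$ are \emph{not} assumed to admit any asymptotic expansion in powers of $r$ (with or without logarithms). Consequently $Pv$ for $v=r^{\beta}\varphi$ is merely a function decaying like $r^{\beta+\mu_i-2}$, not a (formal) sum of homogeneous terms, so there is no indicial equation to solve order by order, no formal polyhomogeneous series to generate, and nothing to Borel-sum: the iteration cannot even be set up beyond the zeroth step, let alone produce an $O(r^\infty)$ error. (Relatedly, attributing the $\varepsilon$-loss to resonant $\log r$ terms presupposes this polyhomogeneous picture; in the paper's generality the loss comes from having to choose weights below $\beta_i+\mu_i$ and off the discrete set $\mathcal{D}_{\Sigma_i}$, and from the merely asymptotic nature of the metric.) A smaller overstatement: solving for a correction ``of arbitrary decay'' with one compactly supported modification is not available, since the cokernel of $\Delta_g$ on $W^{k,p}_{\boldsymbol\delta}$ grows as $\boldsymbol\delta$ increases; you only need, and can only arrange, a fixed weight slightly below $\beta_i+\mu_i$.

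The good news is that your final step already contains the correct mechanism, and the paper indicates (referring to the author's thesis) that the proof uses exactly the condition \eq{ConicalSingularity} together with Theorem \ref{Fredholm}. Apply that step directly to the naive cutoff transplant $\tilde v=\sum_i(\phi_i)_*(\chi_i v_i)$, inducting on the nilpotent filtration of $\bigoplus_iV_{\mathsf{P}_{\gamma_i}}(C_i')$ under $\Delta_{g_i}$: the error $f=\Delta_g\tilde v-\sum_i\Psi_{\boldsymbol\gamma}(\Delta_{g_i}v_i)$ decays like $\rho^{\beta_i+\mu_i-2-\varepsilon'}$ near $x_i$ by \eq{ConicalSingularity} and the inductive hypothesis (i), hence lies in $W^{k,p}_{\boldsymbol\delta-2}(M')$ for every $k$ with $\delta_i<\beta_i+\mu_i-\varepsilon'$; choose $\delta_i\notin\mathcal{D}_{\Sigma_i}$ with $\beta_i+\mu_i-\varepsilon<\delta_i$, kill the finite-dimensional cokernel by adding a smooth compactly supported function (possible by unique continuation of the harmonic cokernel representatives, as you argue), solve with Theorem \ref{Fredholm}, and use Theorem \ref{WeightedSobolevEmbedding} and elliptic regularity to see the correction lies in $C^\infty_{\boldsymbol\delta}$. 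Setting $\Psi_{\boldsymbol\gamma}(v)=\tilde v+u$ gives (i) with the stated $\varepsilon$-loss and (ii) with discrepancy equal to the compactly supported modification, in $C^{\infty}_{\cs}(M')$. In short: delete the formal-series/Borel-summation construction of $\mathcal{E}_i$, which is unjustified under the paper's hypotheses, and promote your ``final step'' to the whole proof.
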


\noindent Proposition \ref{Asymptotics} is proved using the asymptotic condition \eq{ConicalSingularity} and Theorem \ref{Fredholm}. We will not give a proof here as it is mainly technical, but refer the interested reader to the author's thesis \cite[Prop. 6.14]{BehrndtDPhil}.

Using Proposition \ref{Asymptotics} we can now define weighted $C^k$-spaces, H\"older spaces, and Sobolev spaces with discrete asymptotics on compact Riemannian manifolds with conical singularities as follows. If $(M,g)$ is a compact $m$-dimensional Riemannian manifold with conical singularities, $m\geq 3$, then for $k\in\mathbb{N}$, $\alpha\in(0,1)$, and $\boldsymbol\gamma\in\mathbb{R}^n$ we define
\[
C^k_{\boldsymbol\gamma,\mathsf{P}_{\boldsymbol\gamma}}(M')=C^k_{\boldsymbol\gamma}(M')\oplus\im\;\Psi_{\boldsymbol\gamma}\quad\mbox{and}\quad C^{k,\alpha}_{\boldsymbol\gamma,\mathsf{P}_{\boldsymbol\gamma}}(M')=C^{k,\alpha}_{\boldsymbol\gamma}(M')\oplus\im\;\Psi_{\boldsymbol\gamma}.
\] 
Finally if $p\in[1,\infty)$, then we define the weighted Sobolev space with discrete asymptotics $W^{k,p}_{\boldsymbol\gamma,\mathsf{P}_{\boldsymbol\gamma}}(M')$ by
\[
W^{k,p}_{\boldsymbol\gamma,\mathsf{P}_{\boldsymbol\gamma}}(M')=W^{k,p}_{\boldsymbol\gamma}(M')\oplus\im\;\Psi_{\boldsymbol\gamma}.
\]
Then $C^k_{\boldsymbol\gamma,\mathsf{P}_{\boldsymbol\gamma}}(M')$, $C^{k,\alpha}_{\boldsymbol\gamma,\mathsf{P}_{\boldsymbol\gamma}}(M')$, and $W^{k,p}_{\boldsymbol\gamma,\mathsf{P}_{\boldsymbol\gamma}}(M')$ are Banach spaces, where the norm on the discrete asymptotics part is some finite dimensional norm. Note that the discrete asymptotics are trivial if $\boldsymbol\gamma\leq 0$, so that in this case the weighted spaces with discrete asymptotics are simply weighted spaces as defined in \S\ref{WeightedSpaces}.

\subsection{The Laplace operator on compact Riemannian manifolds with conical singularities. II}

If $(M,g)$ is a compact Riemannian manifold, then it is well known that the Laplace operator defines an isomorphism of Banach spaces
\[
\Delta_g:\left\{u\in W^{k,p}(M)\;:\;\mbox{$\int_Mu\;\mathrm dV_g=0$}\right\}\longrightarrow\left\{u\in W^{k-2,p}(M)\;:\;\mbox{$\int_Mu\;\mathrm dV_g=0$}\right\}
\]
for every $k\in\mathbb{N}$ with $k\geq 2$ and $p\in(1,\infty)$. The result continues to hold if we replace the Sobolev spaces by H\"older spaces, see Aubin \cite[Ch. 4, Thm. 4.7]{Aubin}. Using the weighted H\"older and Sobolev spaces with discrete asymptotics we can now state a similar result for the Laplace operator on Riemannian manifolds with conical singularities.

Let $(M,g)$ be a compact $m$-dimensional Riemannian manifold with conical singularities as in Definition \ref{DefinitionConicalSingularities}, $m\geq 3$, and $\boldsymbol\gamma\in\mathbb{R}^n$ with $\boldsymbol\gamma>2-m$. For $k\in\mathbb{N}$ and $\alpha\in(0,1)$ we then define $C^{k,\alpha}_{\boldsymbol\gamma,\mathsf{P}_{\boldsymbol\gamma}}(M')_0$ to consist of $u\in C^{k,\alpha}_{\boldsymbol\gamma,\mathsf{P}_{\boldsymbol\gamma}}(M')$ with $\int_{M'}u\;\mathrm dV_g=0$, and in a similar way for $p\in[1,\infty)$ we let $W^{k,p}_{\boldsymbol\gamma,\mathsf{P}_{\boldsymbol\gamma}}(M')_0$ be the space of functions $u\in W^{k,p}_{\boldsymbol\gamma,\mathsf{P}_{\boldsymbol\gamma}}(M')$ with $\int_{M'}u\;\mathrm dV_g=0$. From Proposition \ref{Asymptotics} it follows that for $k\in\mathbb{N}$ with $k\geq 2$ and $\alpha\in(0,1)$, $\Delta_g:C^{k,\alpha}_{\boldsymbol\gamma,\mathsf{P}_{\boldsymbol\gamma}}(M')_0\rightarrow C^{k-2,\alpha}_{\boldsymbol\gamma-2,\mathsf{P}_{\boldsymbol\gamma-\boldsymbol 2}}(M')_0$ and for $p\in[1,\infty)$, $\Delta_g:W^{k,p}_{\boldsymbol\gamma,\mathsf{P}_{\boldsymbol\gamma}}(M')_0\rightarrow W^{k-2,p}_{\boldsymbol\gamma-2,\mathsf{P}_{\boldsymbol\gamma-\boldsymbol 2}}(M')_0$ are well defined linear operators. We then have the following result, which also verifies \eq{llkko}.

\begin{Prop}\label{Invertibility}
Let $(M,g)$ be a compact $m$-dimensional Riemannian manifold with conical singularities, $m\geq 3$, and $\boldsymbol\gamma\in\mathbb{R}^n$ with $\boldsymbol\gamma>2-m$ and $\gamma_i\notin\mathcal{E}_{\Sigma_i}$ for $i=1,\ldots,n$. Then the following hold.\vspace{0.17cm}
\begin{compactenum}
\item[{\rm(i)}] Let $k\in\mathbb{N}$ with $k\geq 2$ and $\alpha\in(0,1)$. Then 
\begin{equation}\label{LLOPI1}
\Delta_g:C^{k,\alpha}_{\boldsymbol\gamma,\mathsf{P}_{\boldsymbol\gamma}}(M')_0\rightarrow C^{k-2,\alpha}_{\boldsymbol\gamma-2,\mathsf{P}_{\boldsymbol\gamma-2}}(M')_0
\end{equation}
is an isomorphism of Banach spaces.
\item[{\rm(ii)}] Let $k\in\mathbb{N}$ with $k\geq 2$ and $p\in(1,\infty)$. Then
\begin{equation}\label{LLOPI2}
\Delta_g:W^{k,p}_{\boldsymbol\gamma,\mathsf{P}_{\boldsymbol\gamma}}(M')_0\rightarrow W^{k-2,p}_{\boldsymbol\gamma-2,\mathsf{P}_{\boldsymbol\gamma-2}}(M')_0
\end{equation}
is an isomorphism of Banach spaces.
\end{compactenum} 
\end{Prop}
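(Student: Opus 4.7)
My plan is to prove (i) and (ii) in parallel by establishing three facts: the operator $\Delta_g$ is well defined between the claimed spaces, it is Fredholm of index zero, and its kernel on the mean-zero subspace is trivial; surjectivity then follows automatically. For well-definedness, the weighted part of $u$ is handled by Proposition \ref{SchauderLPLaplace}. On the asymptotics part, Proposition \ref{Asymptotics}(ii) gives $\Delta_g\Psi_{\boldsymbol\gamma}(v)=\sum_i\Psi_{\boldsymbol\gamma}(\Delta_{g_i}v_i)+\phi$ with $\phi\in C^{\infty}_{\cs}(M')$; since $\Delta_{g_i}$ maps $V_{\mathsf{P}_{\gamma_i}}(C_i')$ into $V_{\mathsf{P}_{\gamma_i-2}}(C_i')$, I rewrite this as $\Psi_{\boldsymbol\gamma-2}(\Delta v)$ plus the error $\Psi_{\boldsymbol\gamma}(\Delta v)-\Psi_{\boldsymbol\gamma-2}(\Delta v)$, and Proposition \ref{Asymptotics}(i), applied with $\varepsilon>0$ chosen sufficiently small, shows this error decays strictly faster than $\rho^{\boldsymbol\gamma-2}$ and so is absorbed by the weighted target. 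Preservation of the mean-zero condition is a cutoff integration-by-parts computation, valid because $\boldsymbol\gamma>2-m$ and $m\geq 3$ make the boundary integrals over $\{r_i=\varepsilon\}$ vanish in the limit.

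For the index, Theorem \ref{Fredholm} states that $\Delta_g:W^{k,p}_{\boldsymbol\gamma}(M')\to W^{k-2,p}_{\boldsymbol\gamma-2}(M')$ is Fredholm of index $-\sum_i M_{\Sigma_i}(\gamma_i)$, and \eq{LLOPI2} is a finite-rank perturbation of its direct sum with a linear map $\im\Psi_{\boldsymbol\gamma}\to\im\Psi_{\boldsymbol\gamma-2}$, so \eq{LLOPI2} is Fredholm of index $-\sum_iM_{\Sigma_i}(\gamma_i)+\sum_i\bigl(\dim V_{\mathsf{P}_{\gamma_i}}(C_i')-\dim V_{\mathsf{P}_{\gamma_i-2}}(C_i')\bigr)$. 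Using $\dim V_{\mathsf{P}_\gamma}(C')=N_\Sigma(\gamma)$ for $\gamma>0$, the vanishing $\dim V_{\mathsf{P}_\gamma}(C')=0$ for $\gamma\leq 0$, and the identity \eq{indexxx}, a short case check gives $\dim V_{\mathsf{P}_{\gamma_i}}(C_i')-\dim V_{\mathsf{P}_{\gamma_i-2}}(C_i')=M_{\Sigma_i}(\gamma_i)$, so the total index is zero. The restriction to the mean-zero subspaces, of codimension one in both source and target, preserves index zero.

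For injectivity, suppose $u=u_1+\Psi_{\boldsymbol\gamma}(w)\in W^{k,p}_{\boldsymbol\gamma,\mathsf{P}_{\boldsymbol\gamma}}(M')_0$ satisfies $\Delta_g u=0$. The leading asymptotic terms of $\Psi_{\boldsymbol\gamma}(w)$ at each singularity are of the form $r^{\beta_i}\varphi_i$ with $\beta_i\geq 0$, so $\Psi_{\boldsymbol\gamma}(w)$ is bounded on $M'$. Picking $\boldsymbol\delta$ with $2-m<\delta_i<\min(\gamma_i,0)$ (possible since $\boldsymbol\gamma>2-m$), I obtain $u\in W^{k,p}_{\boldsymbol\delta}(M')$ with $\boldsymbol\delta\in(2-m,0)^n$. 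Because $\mathcal{D}_{\Sigma_i}\cap(2-m,0)=\emptyset$, the set $(2-m,0)^n$ lies in a single connected component of $(\mathbb{R}\setminus\mathcal{D}_{\Sigma_1})\times\cdots\times(\mathbb{R}\setminus\mathcal{D}_{\Sigma_n})$, and the last assertion of Theorem \ref{Fredholm} tells us that the kernel of $\Delta_g$ on $W^{k,p}_{\boldsymbol\delta}(M')$ is independent of $\boldsymbol\delta$ throughout this range. A direct integration-by-parts argument (the asymptotic expansion of any harmonic function in $W^{k,p}_{\boldsymbol\delta}(M')$ must begin at order $\alpha=0$ since no other exponents are allowed in $(2-m,0)$, so such a function is bounded at each singularity and the boundary integrals over $\{r_i=\varepsilon\}$ vanish in the limit) identifies this kernel as the one-dimensional space of constants. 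Hence $u$ is constant, and the mean-zero condition forces $u=0$; surjectivity of \eq{LLOPI2} now follows from index zero together with trivial kernel. The main obstacle is the well-definedness step, controlling the error $\Psi_{\boldsymbol\gamma}(\Delta v)-\Psi_{\boldsymbol\gamma-2}(\Delta v)$ so that it lies in the weighted target; once that is in place, the rest is essentially formal bookkeeping with the Fredholm theory of Theorem \ref{Fredholm}.
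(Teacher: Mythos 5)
Your core argument is essentially the paper's: index zero for the enlarged operator via the count $\dim V_{\mathsf{P}_{\gamma_i}}(C_i')-\dim V_{\mathsf{P}_{\gamma_i-2}}(C_i')=M_{\Sigma_i}(\gamma_i)$ together with \eq{indexxx} (your case check is correct, and your remark that passing to the codimension-one mean-zero subspaces of source and target preserves index zero is the same implicit step the paper makes), and triviality of the kernel by reducing, via boundedness of the discrete asymptotics, to weights in $(2-m,0)^n$, invoking the constancy of the kernel on that component from Theorem \ref{Fredholm}, and identifying the kernel with the constants by integration by parts, which the mean-zero condition kills. The only stylistic difference in that part is that the paper does the integration by parts directly at weights $\boldsymbol\gamma>\frac{1}{2}(2-m)$, where the weighted bounds alone make the boundary terms vanish, and only then uses kernel constancy; your variant leans instead on the asymptotic expansion of harmonic functions to get boundedness, which is equivalent but slightly less self-contained.

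The one step that does not hold as written is your control of the error $\Psi_{\boldsymbol\gamma}(\Delta v)-\Psi_{\boldsymbol\gamma-2}(\Delta v)$ in the well-definedness argument. Proposition \ref{Asymptotics}(i) only says that each of $\Psi_{\boldsymbol\gamma}(w)$ and $\Psi_{\boldsymbol\gamma-2}(w)$ differs from the cone model $w$, of order $\beta_i-2$, by $O(r^{\mu_i-\varepsilon+\beta_i-2})$, so their difference is only $O(r^{\mu_i-\varepsilon+\beta_i-2})$. Since $\beta_i$ can be as small as $2$ (take $v_i=r^2$ when $\gamma_i>2$, for which $\Delta_{g_i}v_i$ is a nonzero constant), membership of this difference in $C^{k-2,\alpha}_{\boldsymbol\gamma-2}(M')$ would require $\mu_i-\varepsilon\geq\gamma_i-2$, which fails as soon as $\gamma_i>\mu_i+2$; choosing $\varepsilon$ small does not help, because $\mu_i$ is fixed by the geometry while $\gamma_i$ is only constrained by $\gamma_i>2-m$ and $\gamma_i\notin\mathcal{E}_{\Sigma_i}$. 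In the paper this mapping property is taken as a standing fact stated just before the proposition: it follows from the way $\Psi_{\boldsymbol\gamma}$ is actually constructed in \cite{BehrndtDPhil} (the maps are built compatibly in $\boldsymbol\gamma$, so the difference you are trying to estimate is controlled by the construction, not by the error bound of Proposition \ref{Asymptotics}(i) alone). So either appeal to that compatibility or to the paper's assertion; your estimate by itself does not close this step for large weights. The remaining bookkeeping (finite-rank perturbations do not change the index, preservation of the mean-zero condition by a cutoff integration by parts for $\boldsymbol\gamma>2-m$, and the Open Mapping Theorem to upgrade the continuous bijection to an isomorphism of Banach spaces) is fine.
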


\begin{proof}
We demonstrate the proof of {\rm(i)}, the proof of {\rm(ii)} goes similarly. Thus let $k\in\mathbb{N}$ with $k\geq 2$, $\alpha\in(0,1)$, and $\boldsymbol\gamma\in\mathbb{R}^n$ with $\boldsymbol\gamma>2-m$ and $\gamma_i\notin\mathcal{E}_{\Sigma_i}$ for $i=1,\ldots,n$. Then by Theorem \ref{Fredholm}, $\Delta_g:C^{k,\alpha}_{\boldsymbol\gamma}(M')_0\longrightarrow C^{k-2,\alpha}_{\boldsymbol\gamma-2}(M')_0$ is a Fredholm operator and
\begin{equation}\label{Index1}
\index\left\{\Delta_g:C^{k,\alpha}_{\boldsymbol\gamma}(M')_0\longrightarrow C^{k-2,\alpha}_{\boldsymbol\gamma-2}(M')_0\right\}=-\sum_{i=1}^nM_{\Sigma_i}(\gamma_i).
\end{equation}
When we replace $C^{k,\alpha}_{\boldsymbol\gamma-2}(M')_0$ by $C^{k,\alpha}_{\boldsymbol\gamma-2,\mathsf{P}_{\boldsymbol\gamma-2}}(M')_0$ and $C^{k,\alpha}_{\boldsymbol\gamma}(M')_0$ by the space $C^{k,\alpha}_{\boldsymbol\gamma,\mathsf{P}_{\boldsymbol\gamma}}(M')_0$ in \eq{Index1}, then
\begin{gather}\label{Index3}
\begin{split}
&\index\left\{\Delta_g:C^{k,\alpha}_{\boldsymbol\gamma,\mathsf{P}_{\boldsymbol\gamma}}(M')_0\rightarrow C^{k-2,\alpha}_{\boldsymbol\gamma-2,\mathsf{P}_{\boldsymbol\gamma-2}}(M')_0\right\}=\quad\quad\quad\quad\quad\\&\quad\quad\quad\quad\quad\quad\quad\quad-\sum_{i=1}^nM_{\Sigma_i}(\gamma_i)-\dim\im\;\Psi_{\boldsymbol\gamma-2}+\dim\im\;\Psi_{\boldsymbol\gamma}
\end{split}
\end{gather}
By definition of $\Psi_{\boldsymbol\gamma}$ and $\Psi_{\boldsymbol\gamma-2}$ we have that $\dim\im\;\Psi_{\boldsymbol\gamma}=\sum_{i=1}^nN_{\Sigma_i}(\gamma_i)$ and $\dim\im\;\Psi_{\boldsymbol\gamma-2}=\sum_{i=1}^nN_{\Sigma_i}(\gamma_i-2)$, where $N_{\Sigma_i}$ is defined in \eq{DEFN}. Using \eq{indexxx} we then conclude from \eq{Index3} that 
\[
\index\left\{\Delta_g:C^{k,\alpha}_{\boldsymbol\gamma,\mathsf{P}_{\boldsymbol\gamma}}(M')_0\rightarrow C^{k-2,\alpha}_{\boldsymbol\gamma-2,\mathsf{P}_{\boldsymbol\gamma-2}}(M')_0\right\}=0.
\]
Thus \eq{llkko} holds and in order to show that $\Delta_g:C^{k,\alpha}_{\boldsymbol\gamma,\mathsf{P}_{\boldsymbol\gamma}}(M')_0\rightarrow C^{k-2,\alpha}_{\boldsymbol\gamma-2,\mathsf{P}_{\boldsymbol\gamma-2}}(M')_0$ is a bijection it suffices to show that the kernel is trivial. 

Let $u\in C^{k,\alpha}_{\boldsymbol\gamma,\mathsf{P}_{\boldsymbol\gamma}}(M')_0$, such that $\Delta_gu=0$, and let us first assume that $\boldsymbol\gamma>\frac{1}{2}(2-m)$. Then integration by parts gives
\[
0=\int_{M'}u\Delta_gu\;\mathrm dV_g=-\int_{M'}|\mathrm du|^2\;\mathrm dV_g
\]
and hence $\mathrm du=0$. So $u$ is constant on $M'$, but $\int_{M'}u\;\mathrm dV_g=0$, and hence $u\equiv 0$. Since $(2-m,0)^n$ is a connected subset of $(\mathbb{R}^n\backslash\mathcal{D}_{\Sigma_1})\times\cdots\times(\mathbb{R}^n\backslash\mathcal{D}_{\Sigma_n})$ that contains $(\frac{1}{2}(2-m),\ldots,\frac{1}{2}(2-m))$, it follows from Theorem \ref{Fredholm} that the kernel of $\Delta_g:C^{k,\alpha}_{\boldsymbol\gamma,\mathsf{P}_{\boldsymbol\gamma}}(M')_0\rightarrow C^{k-2,\alpha}_{\boldsymbol\gamma-2,\mathsf{P}_{\boldsymbol\gamma-2}}(M')_0$ is trivial for every $\boldsymbol\gamma>2-m$. Hence $\Delta_g:C^{k,\alpha}_{\boldsymbol\gamma,\mathsf{P}_{\boldsymbol\gamma}}(M')_0\rightarrow C^{k-2,\alpha}_{\boldsymbol\gamma-2,\mathsf{P}_{\boldsymbol\gamma-2}}(M')_0$ is a bijection, and the Open Mapping Theorem \cite[Ch. II, \S 5]{Yosida} implies that this operator is an isomorphism of Banach spaces.
\end{proof}

The next proposition is a version of the Schauder and $L^p$-estimates for the Laplace operator acting on weighted spaces with discrete asymptotics.

\begin{Prop}\label{SchauderLPLaplace2}
Let $(M,g)$ be a compact $m$-dimensional Riemannian manifold with conical singularities as in Definition \ref{DefinitionConicalSingularities}, $m\geq 3$, and $\boldsymbol\gamma\in\mathbb{R}^n$ with $\gamma_i\notin\mathcal{E}_{\Sigma_i}$ for $i=1,\ldots,n$. Let $u,f\in L^1_{\loc}(M')$ and assume that $\Delta_gu=f$ holds in the weak sense. Then the following hold.\vspace{0.17cm}
\begin{compactenum}
\item[{\rm(i)}] Let $k\in\mathbb{N}$ with $k\geq 2$ and $\alpha\in(0,1)$. If $f\in C^{k-2,\alpha}_{\boldsymbol\gamma-2,\mathsf{P}_{\boldsymbol\gamma-2}}(M')$ and $u\in C^0_{\boldsymbol\gamma,\mathsf{P}_{\boldsymbol\gamma}}(M')$, then $u\in C^{k,\alpha}_{\boldsymbol\gamma,\mathsf{P}_{\boldsymbol\gamma}}(M')$. Moreover there exists a constant $c>0$ independent of $u$ and $f$, such that
\begin{equation}\label{estimate3}
\|u\|_{C^{k,\alpha}_{\boldsymbol\gamma,\mathsf{P}_{\boldsymbol\gamma}}}\leq c\left(\|f\|_{C^{k-2,\alpha}_{\boldsymbol\gamma-2,\mathsf{P}_{\boldsymbol\gamma-2}}}+\|u\|_{C^0_{\boldsymbol\gamma,\mathsf{P}_{\boldsymbol\gamma}}}\right).
\end{equation}
\item[{\rm(ii)}] Let $k\in\mathbb{N}$ with $k\geq 2$ and $p\in(1,\infty)$. If $f\in W^{k-2,p}_{\boldsymbol\gamma-2,\mathsf{P}_{\boldsymbol\gamma-2}}(M')$ and $u\in L^p_{\boldsymbol\gamma,\mathsf{P}_{\boldsymbol\gamma}}(M')$, then $u\in W^{k,p}_{\boldsymbol\gamma,\mathsf{P}_{\boldsymbol\gamma}}(M')$. Moreover there exists a constant $c>0$ independent of $u$ and $f$, such that
\begin{equation}\label{estimate4}
\|u\|_{W^{k,p}_{\boldsymbol\gamma,\mathsf{P}_{\boldsymbol\gamma}}}\leq c\left(\|f\|_{W^{k-2,p}_{\boldsymbol\gamma-2,\mathsf{P}_{\boldsymbol\gamma-2}}}+\|u\|_{L^p_{\boldsymbol\gamma,\mathsf{P}_{\boldsymbol\gamma}}}\right).
\end{equation}
\end{compactenum}
\end{Prop}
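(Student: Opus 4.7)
The plan is to strip off the discrete asymptotic pieces of $u$ and $f$ and reduce the statement to Proposition~\ref{SchauderLPLaplace}. By definition of the weighted spaces with discrete asymptotics, write uniquely
\[
u = u_0 + \Psi_{\boldsymbol\gamma}(v), \qquad f = f_0 + \Psi_{\boldsymbol\gamma-2}(w),
\]
with $u_0 \in C^0_{\boldsymbol\gamma}(M')$, $f_0 \in C^{k-2,\alpha}_{\boldsymbol\gamma-2}(M')$, $v\in\bigoplus_iV_{\mathsf{P}_{\gamma_i}}(C_i')$, $w\in\bigoplus_iV_{\mathsf{P}_{\gamma_i-2}}(C_i')$, with the individual norms of each component bounded by $\|u\|_{C^0_{\boldsymbol\gamma,\mathsf{P}_{\boldsymbol\gamma}}}$ and $\|f\|_{C^{k-2,\alpha}_{\boldsymbol\gamma-2,\mathsf{P}_{\boldsymbol\gamma-2}}}$ respectively. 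Proposition~\ref{Asymptotics}(ii) applied to $v$ gives
\[
\Delta_g\Psi_{\boldsymbol\gamma}(v) = \Psi_{\boldsymbol\gamma-2}\bigl((\Delta_{g_i}v_i)_{i=1}^n\bigr) + \phi_v, \qquad \phi_v \in C^\infty_{\cs}(M'),
\]
and combining with $\Delta_g u = f$ yields, weakly,
\[
\Delta_g u_0 = (f_0 - \phi_v) + \Psi_{\boldsymbol\gamma-2}(w - \Delta v).
\]

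If the finite-dimensional remainder $w - \Delta v$ vanishes, the rest is routine: the right-hand side lies in $C^{k-2,\alpha}_{\boldsymbol\gamma-2}(M')$, Proposition~\ref{SchauderLPLaplace}(i) applied to $u_0 \in C^0_{\boldsymbol\gamma}(M')$ yields $u_0 \in C^{k,\alpha}_{\boldsymbol\gamma}(M')$ together with its Schauder estimate, and adding back $\Psi_{\boldsymbol\gamma}(v)$ (smooth on $M'$, finite-dimensional, controlled by $\|u\|$) gives $u \in C^{k,\alpha}_{\boldsymbol\gamma,\mathsf{P}_{\boldsymbol\gamma}}(M')$ and~\eq{estimate3}. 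Part~(ii) goes through identically with Proposition~\ref{SchauderLPLaplace}(ii) substituted for~(i).

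To establish $w = \Delta v$, I would first use interior elliptic regularity on $M'$ to upgrade $u_0$ to $C^{k,\alpha}_{\loc}(M')$, and then apply Proposition~\ref{SchauderLPLaplace} with an auxiliary weight $\boldsymbol\delta \leq \boldsymbol\gamma$ chosen so that the entire right-hand side of the displayed equation lies in $C^{k-2,\alpha}_{\boldsymbol\delta-2}(M')$; this produces $u_0 \in C^{k,\alpha}_{\boldsymbol\delta}(M')$. Iterating this bootstrap through the finitely many exceptional weights in $\mathcal{E}_{\Sigma_i}\cap(\delta_i,\gamma_i)$ --- the standard asymptotic-expansion argument for elliptic cone operators --- produces a classical polyhomogeneous expansion of $u_0$ at each singularity $x_i$. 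The hypothesis $u_0\in C^0_{\boldsymbol\gamma}$ forces the leading exponent of this expansion to be at least $\gamma_i$, so via the asymptotic condition~\eq{ConicalSingularity} the classical expansion of $\Delta_gu_0$ at $x_i$ has leading order no less than $\rho^{\gamma_i-2}$. Since every exponent contributed by $\Psi_{\boldsymbol\gamma-2}$ at $x_i$ is by construction strictly smaller than $\gamma_i-2$, matching leading exponents in the displayed equation forces $\Psi_{\boldsymbol\gamma-2}(w-\Delta v)\equiv 0$, and hence $w = \Delta v$ by injectivity of $\Psi_{\boldsymbol\gamma-2}$ on the asymptotic model space. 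The main obstacle is precisely this asymptotic matching step: building enough regularity and classical expansion of $u_0$ out of its low a priori $C^0_{\boldsymbol\gamma}$ control, so that the leading-order comparison of $\Delta_gu_0$ with the right-hand side can be carried out rigorously.
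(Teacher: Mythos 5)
Your reduction is sound in outline, and the final assembly (apply Proposition \ref{SchauderLPLaplace} to $u_0$, add back the finite-dimensional piece controlled by $\|u\|_{C^0_{\boldsymbol\gamma,\mathsf{P}_{\boldsymbol\gamma}}}$, read off \eq{estimate3}) would indeed work once you know that the asymptotic component of $f$ is exactly the one produced by $\Delta_g$ acting on the asymptotic component of $u$. But that matching step, $w=\Delta v$, is precisely where your argument has a genuine gap, and it carries essentially the whole content of the proposition: with only $u_0\in C^0_{\boldsymbol\gamma}(M')$ in hand, Proposition \ref{SchauderLPLaplace} applied at an auxiliary weight $\boldsymbol\delta\leq\boldsymbol\gamma$ only returns $u_0\in C^{k,\alpha}_{\boldsymbol\delta}(M')$ at that same (worse) weight; iterating it never lets you cross an exceptional weight, and it does not produce a polyhomogeneous expansion --- none of the results quoted in the paper (Proposition \ref{SchauderLPLaplace}, Theorem \ref{Fredholm}, Proposition \ref{Cokernel}) yield conormal or polyhomogeneous asymptotics of solutions. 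To make your ``standard asymptotic-expansion argument for elliptic cone operators'' rigorous you would have to import the cone-calculus expansion machinery (Lockhart--McOwen/Melrose/Schulze type), handle the perturbation from the exact cone metric via \eq{ConicalSingularity} and the $\varepsilon$-loss in Proposition \ref{Asymptotics}(i), and rule out resonant (logarithmic) terms; that is a substantial piece of analysis not available in the paper and essentially as hard as the proposition itself.

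The paper's proof avoids the matching step altogether. Assuming without loss of generality $\boldsymbol\gamma>0$, the discrete asymptotics are bounded, so $u\in C^0_{\boldsymbol 0}(M')$ and Proposition \ref{SchauderLPLaplace} gives $u\in C^{k,\alpha}_{\boldsymbol 0}(M')$; integration by parts then gives $\int_{M'}f\;\mathrm dV_g=0$. Writing $u=u_0+\lambda\phi$ with $\phi\in C^{\infty}_{\cs}(M')$, $\int_{M'}\phi\;\mathrm dV_g=1$ and $u_0$ of zero mean, one has $\Delta_gu_0=f-\lambda\Delta_g\phi\in C^{k-2,\alpha}_{\boldsymbol\gamma-2,\mathsf{P}_{\boldsymbol\gamma-2}}(M')_0$, and Proposition \ref{Invertibility} supplies the unique zero-mean solution in $C^{k,\alpha}_{\boldsymbol\gamma,\mathsf{P}_{\boldsymbol\gamma}}(M')_0$, which is identified with $u_0$ because the kernel of $\Delta_g$ on the larger zero-mean space is trivial (Theorem \ref{Fredholm} and constancy of the kernel in the weight). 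In that route the identity you are trying to establish, $\pi_{\im\;\Psi_{\boldsymbol\gamma-2}}(\Delta_gu_2)=f_2$, comes out as a consequence of the regularity already proved --- the paper uses it only afterwards, when estimating the asymptotic component --- rather than as an input. So either invoke Proposition \ref{Invertibility} as the paper does, or be prepared to develop the full asymptotics theory; as written, the bootstrap you describe does not close.
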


\begin{proof}
We demonstrate the proof of \rm{(i)}, the proof of \rm{(ii)} goes similarly. We can assume that $\boldsymbol\gamma>0$, since otherwise the discrete asymptotics are trivial and we are in the situation of Proposition \ref{SchauderLPLaplace}. Let $k\in\mathbb{N}$ with $k\geq 2$, $\alpha\in(0,1)$, and assume that $f\in C^{k-2,\alpha}_{\boldsymbol\gamma-2,\mathsf{P}_{\boldsymbol\gamma-2}}(M')$ and $u\in C^0_{\boldsymbol\gamma,\mathsf{P}_{\boldsymbol\gamma}}(M')$. Using that the discrete asymptotics are bounded functions on $M'$ and the weighted Schauder estimates from \eq{estimate} we find that $u\in C^{k,\alpha}_{\boldsymbol 0}(M')$. Hence $\Delta_gu=f$ and $\int_{M'}f\;\mathrm dV_g=0$. Choose $\phi\in C^{\infty}_{\cs}(M')$ with $\int_{M'}\phi\;\mathrm dV_g=1$ and write $u=u_0+\lambda\phi$ with $u_0\in C^{k,\alpha}_{\boldsymbol 0}(M')_0$ and $\lambda\in\mathbb{R}$. Then Proposition \ref{Invertibility}, \rm{(i)}, implies $u_0\in C^{k,\alpha}_{\boldsymbol\gamma,\mathsf{P}_{\boldsymbol\gamma}}(M')_0$ and thus $u\in C^{k,\alpha}_{\boldsymbol\gamma,\mathsf{P}_{\boldsymbol\gamma}}(M')$ as we wanted to show. 

It remains to prove the estimate \eq{estimate3}. Write $u=u_1+u_2$ with $u_1\in C^{k,\alpha}_{\boldsymbol\gamma}(M')$, $u_2\in\im\;\Psi_{\boldsymbol\gamma}$ and $f=f_1+f_2$ with $f_1\in C^{k-2,\alpha}_{\boldsymbol\gamma-2}(M')$ and $f_2\in\im\;\Psi_{\boldsymbol\gamma-2}$. Then 
\[
\Delta_gu_1+\pi_{C^{k-2,\alpha}_{\boldsymbol\gamma-2}}(\Delta_gu_2)=f_1\quad\mbox{and}\quad\pi_{\im\;\Psi_{\boldsymbol\gamma-2}}(\Delta_gu_2)=f_2,
\]
where $\pi_X$ denotes the projection onto the space $X$. Using the weighted Schauder estimates and the continuity of the linear operator $\pi_{C^{k-2,\alpha}_{\boldsymbol\gamma-2}}\circ\Delta_g:\im\;\Psi_{\boldsymbol\gamma}\rightarrow C^{k-2,\alpha}_{\boldsymbol\gamma-2}(M')$ we find
\begin{align*}
\|u_1\|_{C^{k,\alpha}_{\boldsymbol\gamma}}&\leq c\left(\|f_1\|_{C^{k-2,\alpha}_{\boldsymbol\gamma-2}}+\|\pi_{C^{k-2,\alpha}_{\boldsymbol\gamma-2}}(\Delta_gu_2)\|_{C^{k-2,\alpha}_{\boldsymbol\gamma-2}}+\|u_1\|_{C^0_{\boldsymbol\gamma}}\right)\\&\leq c\left(\|f_1\|_{C^{k-2,\alpha}_{\boldsymbol\gamma-2}}+\|u_2\|_{\im\;\Psi_{\boldsymbol\gamma}}+\|u_1\|_{C^0_{\boldsymbol\gamma}}\right)\\&=c\left(\|f_1\|_{C^{k-2,\alpha}_{\boldsymbol\gamma-2}}+\|u\|_{C^0_{\boldsymbol\gamma,\mathsf{P}_{\boldsymbol\gamma}}}\right).
\end{align*}
Lastly we estimate $u_2$ in terms of $f$. Choose some small $\varepsilon>0$ such that $[\gamma_i-\varepsilon,\gamma_i]\cap\mathcal{D}_{\Sigma_i}=\emptyset$ for $i=1,\ldots,n$. Then by Theorem \ref{Fredholm} and Proposition \ref{Cokernel}, $\Delta_g:W^{k,2}_{\boldsymbol\gamma-\varepsilon}(M')\rightarrow W^{k-2,2}_{\boldsymbol\gamma-\varepsilon-2}(M')$ is a Fredholm operator with cokernel being isomorphic to the kernel of $\Delta_g:W^{k,2}_{2-m-\boldsymbol\gamma+\varepsilon}(M)\rightarrow W^{k-2,2}_{-m-\boldsymbol\gamma+\varepsilon}(M')$. Since $u_1\in C^{k,\alpha}_{\boldsymbol\gamma}(M')$, also $u_1\in W^{k,p}_{\boldsymbol\gamma-\varepsilon}(M')$. Using integration by parts we therefore find that 
\[
\langle h,f_1\rangle_{L^2}=\langle h,\Delta_gu_1+\pi_{C^{k-2,p}_{\boldsymbol\gamma-2}}(\Delta_gu_2)\rangle_{L^2}=\langle h,\pi_{C^{k-2,p}_{\boldsymbol\gamma-2}}(\Delta_gu_2)\rangle_{L^2}
\]
for $h\in\ker\{\Delta_g:W^{k,2}_{2-m-\boldsymbol\gamma+\varepsilon}(M')\rightarrow W^{k-2,2}_{-m-\boldsymbol\gamma+\varepsilon}(M')\}$.
Therefore $f_1$ determines $\sum_{i=1}^nM_{\Sigma_i}(\gamma_i)$ components of $u_2$. Moreover $\pi_{\im\;\Psi_{\boldsymbol\gamma-2}}(\Delta_gu_2)=f_2$, and hence $f_2$ determines $\sum_{i=1}^nN_{\Sigma_i}(\gamma_i-2)$ different components of $u_2$. Thus by \eq{indexxx}, $f$ determines $\sum_{i=1}^nN_{\Sigma_i}(\gamma_i)$ components of $u_2$. Since $\dim\im\;\Psi_{\boldsymbol\gamma}=\sum_{i=1}^nN_{\Sigma_i}(\gamma_i)$, $u_2$ is uniquely determined by $f$. Hence $\|u_2\|_{\im\;\Psi_{\boldsymbol\gamma}}\leq c\|f\|_{C^{k-2,\alpha}_{\boldsymbol\gamma-2,\mathsf{P}_{\boldsymbol\gamma-2}}}$.
\end{proof}

\section{The Friedrichs heat kernel on Riemannian manifolds with conical singularities}\label{HEATKERNEL}
\subsection{The Friedrichs heat kernel on Riemannian manifolds}
Let $(M,g)$ be a Riemannian manifold and consider the Laplace operator acting as an unbounded operator $
\Delta_g:C^{\infty}_{\cs}(M)\subset L^2(M)\rightarrow L^2(M)$, where $C^{\infty}_{\cs}(M)$ is the space of smooth functions on $M$ with compact support. This is a symmetric and nonpositive operator and by Friedrichs' theorem \cite[Ch. XI, \S 7, Thm. 2]{Yosida} there exists a closed and self-adjoint extension $\Delta_g:\dom(\Delta_g)\subset L^2(M)\rightarrow L^2(M)$, called the Friedrichs extension. Then by the spectral theorem for self-adjoint operators \cite[Ch. XI, \S 6, Thm. 1]{Yosida} there exists a unique resolution of the identity $\{E_{\lambda}\}_{\lambda\in\mathbb{R}}$ such that $\Delta_g=\int_{-\infty}^{\infty}\lambda\;\mathrm dE_{\lambda}$. Using the functional calculus for self-adjoint operators \cite[Ch XI, \S 12]{Yosida} we define the Friedrichs heat semigroup $\{\exp(t\Delta_g)\}_{t>0}$ by $\exp(t\Delta_g)=\int_{-\infty}^{\infty}\exp(t\lambda)\;\mathrm dE_\lambda$. Then $\{\exp(t\Delta_g)\}_{t>0}$ is a semigroup of bounded operators on $L^2(M)$ that maps $L^2(M)$ into $\bigcap_{j=0}^\infty\dom(\Delta_g^j)$ for every $t>0$.

The next proposition shows that the action of the Friedrichs heat semigroup on $L^2(M)$ is given by an integral operator with a positive and symmetric kernel.

\begin{Prop}\label{ExistenceHeatKernel}
Let $(M,g)$ be a Riemannian manifold and $\{\exp(t\Delta_g)\}_{t>0}$ the Friedrichs heat semigroup on $(M,g)$. Then there exists a positive function $H\in C^{\infty}((0,\infty)\times M\times M)$, which is symmetric on $M\times M$, such that for every $\varphi\in L^2(M)$
\begin{equation}\label{HeatSemigroup4}
(\exp(t\Delta_g)\varphi)(x)=\int_MH(t,x,y)\varphi(y)\;\mathrm dV_g(y).
\end{equation}
The function $H$ is called the Friedrichs heat kernel on $(M,g)$.
\end{Prop}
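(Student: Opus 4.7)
The plan is to realize the Friedrichs heat semigroup as an integral operator and then verify smoothness, symmetry and positivity of the resulting kernel separately. The starting point is the smoothing property already noted in the statement: $\exp(t\Delta_g)\varphi$ lies in $\bigcap_{j\geq 0}\dom(\Delta_g^j)$, and by iterated local elliptic regularity applied to $\Delta_g^j u\in L^2_{\loc}(M)$ any such $u$ is smooth on $M$. Moreover, for every compact $K\subset M$ and every $k\in\mathbb{N}$, the map $\varphi\mapsto\|\exp(t\Delta_g)\varphi\|_{C^k(K)}$ is bounded on $L^2(M)$. In particular, for each $(t,x)\in(0,\infty)\times M$ the evaluation $\varphi\mapsto(\exp(t\Delta_g)\varphi)(x)$ is a continuous linear functional on $L^2(M)$, so Riesz representation furnishes $H(t,x,\cdot)\in L^2(M)$ satisfying \eqref{HeatSemigroup4}. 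Symmetry $H(t,x,y)=H(t,y,x)$ is then read off from self-adjointness of $\exp(t\Delta_g)$: inserting \eqref{HeatSemigroup4} into $\langle\exp(t\Delta_g)\varphi,\psi\rangle_{L^2}=\langle\varphi,\exp(t\Delta_g)\psi\rangle_{L^2}$ forces equality almost everywhere, and hence pointwise once continuity of $H$ is in hand.

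For joint smoothness on $(0,\infty)\times M\times M$ I would use the semigroup identity
\[
H(t,x,y)=\int_M H(t/2,x,z)\,H(t/2,z,y)\,\mathrm dV_g(z),
\]
together with the smoothness of $H$ in $x$ already established (and in $y$ by symmetry). Smoothness in $t$ comes from $\partial_t\exp(t\Delta_g)\varphi=\Delta_g\exp(t\Delta_g)\varphi$, which implies that for each fixed $y$ the function $(t,x)\mapsto H(t,x,y)$ is a classical solution of the heat equation $\partial_tH=\Delta_{g,x}H$ on $(0,\infty)\times M$; standard parabolic bootstrapping then yields joint smoothness in $(t,x,y)$.

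The main obstacle is positivity. For non-negativity I would invoke the Beurling--Deny criterion: the closed form $\mathcal{E}(u,v)=\int_M g(\nabla u,\nabla v)\,\mathrm dV_g$ associated with the Friedrichs extension is a Dirichlet form, since composition with the truncation $s\mapsto\min(\max(s,0),1)$ does not increase $\mathcal{E}(u,u)$ on $\dom(\mathcal{E})$. The Beurling--Deny theorem then implies that $\exp(t\Delta_g)$ preserves non-negative functions, whence $H\geq 0$. Strict positivity $H>0$ is the delicate step; I would deduce it from the strong minimum principle applied to the non-negative smooth solution $(s,x)\mapsto\int_M H(s,x,z)\varphi(z)\,\mathrm dV_g(z)$ on the connected manifold $M$ for suitably chosen non-negative $\varphi\in C^\infty_{\cs}(M)$, and then transfer positivity to all $t>0$ and all $x,y$ via the semigroup relation above. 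Some care is needed here because $M$ may be non-compact or incomplete, so the minimum principle must be applied locally rather than on all of $M$ at once.
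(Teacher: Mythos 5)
The paper gives no proof of this proposition at all: it simply defers to Davies \cite[Thm 5.2.1]{Davies}. Your sketch is, in essence, a reconstruction of that standard argument, so there is nothing in the paper to contrast it with step by step: smoothing of $\exp(t\Delta_g)$ via iterated local elliptic regularity together with the spectral bound $\|\Delta_g^j\exp(t\Delta_g)\varphi\|_{L^2}\leq c_jt^{-j}\|\varphi\|_{L^2}$, Riesz representation to produce $H(t,x,\cdot)\in L^2$, symmetry from self-adjointness, joint smoothness from parabolic hypoellipticity and the semigroup identity, non-negativity from the Markovian (Beurling--Deny) property of the Friedrichs form, which is indeed the closure of the energy form on $C^{\infty}_{\cs}$ and hence a Dirichlet form. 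Two points deserve tightening. First, strict positivity of $H$ requires $M$ connected (true in the paper's setting, where $M'$ is assumed connected, but not in the proposition as literally stated), and on a possibly incomplete manifold the strong minimum principle must be chained along paths, as you note. Second, your final step should be made explicit: if $H(t_0,x_0,y_0)=0$, apply the minimum principle first to the non-negative solution $(t,y)\mapsto H(t,x_0,y)$ to get $H(s,x_0,\cdot)\equiv 0$ for $0<s<t_0$, then to $u(s,x)=(\exp(s\Delta_g)\varphi)(x)$ with $\varphi\geq 0$, $\varphi\not\equiv 0$, which now vanishes at $(s_0,x_0)$ and hence identically for $s<s_0$; the $C_0$-continuity $\exp(s\Delta_g)\varphi\rightarrow\varphi$ in $L^2$ as $s\rightarrow 0^+$ then forces $\varphi\equiv 0$, a contradiction. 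Alternatively, strict positivity follows from domain monotonicity against the Dirichlet heat kernel of a small ball, which is the route usually taken in the literature; either way your outline is sound.
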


\noindent The proof of Proposition \ref{ExistenceHeatKernel} can be found in Davies \cite[Thm 5.2.1]{Davies}.

There is a well known parametrix construction for the heat kernel, which can be found in Aubin \cite[Ch 4, \S 4.2]{Aubin}.

\begin{Thm}\label{Parametrix}
Let $(M,g)$ be an $m$-dimensional Riemannian manifold and let $H$ be the Friedrichs heat kernel on $(M,g)$. Then near the diagonal in $M\times M$, $H$ has an asymptotic expansion as $t\rightarrow 0$ of the form
\begin{equation}\label{asymptotic}
H(t,x,y)\sim\frac{1}{(4\pi t)^{m/2}}\exp\left(-\frac{d_g(x,y)^2}{4t}\right)\sum_{j=0}^{\infty}a_j(x,y)t^j,
\end{equation}
where $a_j\in C^{\infty}(M\times M)$ for $j\in\mathbb{N}$ and $a_0(x,x)=1$ for $x\in M$.
\end{Thm}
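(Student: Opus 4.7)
The plan is to construct an approximate heat kernel (a parametrix) $K_N(t,x,y)$ of the stated form with finitely many terms, show it solves the heat equation up to a small error, and then invoke Duhamel's principle to match $K_N$ with the true Friedrichs heat kernel $H$. Because the statement is local (near the diagonal), I would work in a tubular neighborhood of the diagonal of width less than the injectivity radius, so that $d_g(x,y)$ is smooth and the exponential map $\exp_y$ is a diffeomorphism onto a normal neighborhood of $y$.

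First I would set up the ansatz $K_N(t,x,y)=(4\pi t)^{-m/2}\exp(-d_g(x,y)^2/(4t))\sum_{j=0}^N a_j(x,y)t^j$, fix $y$, and use normal coordinates centered at $y$ with radial coordinate $r=d_g(x,y)$. Computing $(\partial_t-\Delta_{g,x})K_N$ in these coordinates, the exponential factor together with the $t^{-m/2}$ prefactor satisfies the \emph{Euclidean} heat equation; the curvature contributions collect into terms involving the radial derivatives of the $a_j$ and the volume density $\theta(x,y):=\det(d\exp_y)$. Equating coefficients of powers of $t$ produces a recursive system of first-order ordinary differential equations along radial geodesics from $y$:
\begin{align*}
r\partial_r a_0 + \tfrac{1}{2}(r\partial_r\log\theta)\,a_0 &= 0,\\
r\partial_r a_j + \bigl(j+\tfrac{1}{2}r\partial_r\log\theta\bigr)a_j &= \Delta_{g,x} a_{j-1},\qquad j\geq 1.
\end{align*}
The first equation integrates explicitly to $a_0(x,y)=\theta(x,y)^{-1/2}$, which is smooth on the normal neighborhood and satisfies $a_0(y,y)=1$. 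Each subsequent equation is a linear first-order ODE along the geodesic with a smooth inhomogeneity, so $a_j$ can be solved iteratively as a smooth function on $M\times M$ near the diagonal.

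Next I would extend the $a_j$ to all of $M\times M$ using a cutoff function supported where $d_g(x,y)<\delta<$ injectivity radius, and estimate the residual $R_N(t,x,y):=(\partial_t-\Delta_{g,x})K_N(t,x,y)$. By construction $R_N$ has the form $(4\pi t)^{-m/2}e^{-r^2/(4t)}t^N\Delta_{g,x}a_N$ (plus cutoff errors supported away from the diagonal, which are exponentially small as $t\to 0$), so $R_N$ is uniformly $O(t^{N-m/2})$ and vanishes to high order at $t=0$ provided $N$ is large.

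To upgrade the formal parametrix to the genuine heat kernel, I would use Duhamel's principle. The true heat kernel $H$ satisfies $H = K_N - \int_0^t e^{(t-s)\Delta_g}\circ R_N(s,\cdot,y)\,ds$, where the convolution is taken against the Friedrichs heat semigroup constructed in Proposition~\ref{ExistenceHeatKernel}. Using the $L^2\to L^\infty$ mapping properties of $\exp(t\Delta_g)$ together with the decay of $R_N$ in $t$, one shows the Duhamel correction term lies in $C^\infty$ and is $O(t^{N-m/2+1})$ near $t=0$, and hence only contributes to higher-order terms in the asymptotic expansion. Since $N$ is arbitrary, this gives the full asymptotic expansion \eqref{asymptotic}, and $a_0(x,x)=\theta(x,x)^{-1/2}=1$ since $d\exp_y|_0=\mathrm{id}$.

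The main obstacle I would anticipate is the bookkeeping of the Duhamel iteration: one must verify that the convolution of $R_N$ with the full Friedrichs semigroup actually produces a smooth correction with the stated order of vanishing in $t$, globally on $M\times M$ (not just near the diagonal). On a non-compact or incomplete $M$ this requires care, but once one has the mapping properties of the Friedrichs semigroup and the explicit control on $R_N$ inherited from the recursive construction, the argument closes.
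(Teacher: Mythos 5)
Your construction is the classical Minakshisundaram--Pleijel parametrix argument (radial transport equations giving $a_0=\theta^{-1/2}$, iterative solution of the $a_j$, then Duhamel's principle against the Friedrichs semigroup to absorb the $O(t^{N-m/2})$ residual), which is precisely the construction this paper relies on: it gives no proof of Theorem \ref{Parametrix} itself but cites Aubin [Ch.~4, \S 4.2], where this same argument is carried out. So your proposal is correct and follows essentially the same route as the paper's cited source.
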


Using Theorem \ref{Parametrix} and the standard regularity theory for the heat equation on domains in $\mathbb{R}^m$, see for instance Krylov \cite[Ch. 5]{KrylovSobolev} and \cite[Ch. 8]{KrylovHoelder}, it is straightforward to prove existence and maximal regularity of solutions for the Cauchy problem \eq{HeatEquation} on compact Riemannian manifolds, when the free term lies in a parabolic H\"older or Sobolev space.

\subsection{Asymptotics of the Friedrichs heat kernel on compact Riemannian manifolds with conical singularities}
In this subsection we discuss in an informal way the parametrix construction for the Friedrichs heat kernel on compact Riemannian manifolds with conical singularities following Mooers \cite{Mooers}.

Let $(M,g)$ be a compact $m$-dimensional Riemannian manifold with conical singularities as in Definition \ref{DefinitionConicalSingularities}, $m\geq 3$, and let $H$ be the Friedrichs heat kernel on $(M,g)$, which is a smooth function on $(0,\infty)\times M'\times M'$. Our goal is to understand the asymptotics of $H(t,x,y)$ for $x,y\in M$ as $t\rightarrow 0$. First assume that $x$ and $y$ are close to each other and away from the singularities. Then Theorem \ref{Parametrix} gives a description of the asymptotics of $H(t,x,y)$ as $t\rightarrow 0$. Loosely speaking, if $x$ and $y$ are away from the conical singularities and close to each other, then around $x$ and $y$, $M$ is modelled on the Euclidean space, and therefore the Euclidean heat kernel gives a parametrix for the heat kernel. 

Next let us consider the case, when $x$ and $y$ are close to a singularity. In this case the right model to consider is not the Euclidean space anymore, but the Riemannian cone that models the conical singularity, and we are therefore led to study the Friedrichs heat kernel on Riemannian cones. Let $(\Sigma,h)$ be a compact and connected $(m-1)$-dimensional Riemannian manifold, $m\geq 3$, $(C,g_C)$ the Riemannian cone over $(\Sigma,h)$, and $H_C$ the Friedrichs heat kernel on $(C,g_C)$. A main feature of the Laplace operator on Riemannian cones is its dilation equivariance in radial directions. To exhibit how the dilation equivariance effects the Friedrichs heat kernel let us define an action $\delta^s$ of $s\in(0,\infty)$ on $(0,\infty)\times C'\times C'$ by $\delta^s(t,\sigma,r,\sigma',r')=(s^2t,\sigma,sr,\sigma',sr')$. 
If $t>0$, then 
\begin{equation}\label{IGOR}
(\delta^s)^*(t\Delta_{g_C}\varphi)(\sigma,r)=t\Delta_{g_C}(\delta^s)^*(\varphi)(\sigma,r)\quad\mbox{for }(\sigma,r)\in C'
\end{equation}
and $\varphi\in\dom(\Delta_{g_C})$. Here $\dom(\Delta_{g_C})$ is the domain of the Friedrichs extension of the Laplace operator $\Delta_{g_C}:C^{\infty}_{\cs}(C')\subset L^2(C')\rightarrow L^2(C')$. Then \eq{IGOR} implies that 
\begin{equation}\label{POOHKISSEN}
(\delta^s)^*(\exp(t\Delta_{g_C})\varphi)=\exp(t\Delta_{g_C})(\delta^s)^*(\varphi)\quad\mbox{for }\varphi\in\dom(\Delta_{g_C}).
\end{equation}
From \eq{POOHKISSEN} and Proposition \ref{ExistenceHeatKernel} we then conclude that
\begin{equation}\label{HERMANNSCHWEIN}
(\delta^s)^*(H_C)(t,\sigma,r,\sigma',r')=H_C(s^2,sr,\sigma,sr',\sigma')=s^{-m}H_C(t,r,\sigma,r',\sigma')
\end{equation}
for $(t,\sigma,r,\sigma',r')\in(0,\infty)\times C'\times C'$. Thus, at least in an asymptotic sense, the Friedrichs heat kernel $H$ on $(M,g)$ should also satisfy the homogeneity relation \eq{HERMANNSCHWEIN}. Such a result can be deduced from Nagase \cite[\S 5]{Nagase}.

Finally let us study the function $x\mapsto H(t,x,y)$ for fixed $t>0$ and $y\in M'$. Recall that the Friedrichs heat semigroup $\{\exp(t\Delta_g)\}_{t>0}$ is a semigroup of bounded operators on $L^2(M')$. Moreover for every $t>0$, $\exp(t\Delta_g)$ maps $L^2(M')$ into $\bigcap_{j=0}^\infty\dom(\Delta_g^j)$, and Proposition \ref{ExistenceHeatKernel} implies that for fixed $t>0$ and $y\in M'$ the function $x\mapsto H(t,x,y)$ lies in $\bigcap_{j=0}^\infty\dom(\Delta_g^j)$. By Proposition \ref{Invertibility} and Theorem \ref{WeightedSobolevEmbedding} we have that $\bigcap_{j=0}^\infty\dom(\Delta_g^j)=\bigcap_{\boldsymbol\gamma\in\mathbb{R}^n}C^{\infty}_{\boldsymbol\gamma,\mathsf{P}_{\boldsymbol\gamma}}(M')$, and hence the function $x\mapsto H(t,x,y)$ lies in $\bigcap_{\boldsymbol\gamma\in\mathbb{R}^n}C^{\infty}_{\boldsymbol\gamma,\mathsf{P}_{\boldsymbol\gamma}}(M')$ for fixed $t>0$ and $y\in M'$.

We now discuss parts of Mooers' parametrix construction for the Friedrichs heat kernel \cite{Mooers}. We explain this construction only in an informal way and the interested reader should consult Mooers paper for a detailed description. In order to describe the asymptotics of the Friedrichs heat kernel it is convenient to introduce functions $\rho_{\bff},\rho_{\tf},\rho_{\lb},\rho_{\rb}$, and $\rho_{\tb}$ on $(0,\infty)\times M'\times M'$ as follows. Let $\rho$ be a radius function on $M'$ and define
\begin{eqnarray*}
&&\rho_{\bff}(t,x,y)=\sqrt{t+\rho(x)^2+\rho(y)^2},\quad\rho_{\tf}(t,x,y)=\frac{\sqrt{t+d_g(x,y)^2}}{\sqrt{t+\rho(x)^2+\rho(y)^2}},\\
&&\rho_{\lb}(t,x,y)=\frac{\rho(x)}{\sqrt{t+\rho(x)^2+\rho(y)^2}},\quad\rho_{\rb}(t,x,y)=\frac{\rho(y)}{\sqrt{t+\rho(x)^2+\rho(y)^2}},\\
&&\rho_{\tb}(t,x,y)=\frac{\sqrt{t}}{\sqrt{t+d_g(x,y)^2}}.
\end{eqnarray*}
Loosely speaking we have that $\rho_{\bff}(t,x,y)=0$ if and only if $t=0$ and $\rho(x)=\rho(y)=0$, $\rho_{\tf}(t,x,y)=0$ if and only if $t=0$ and $x=y$, $\rho_{\lb}(t,x,y)=0$ if and only if $\rho(x)=0$, $\rho_{\rb}(t,x,y)=0$ if and only if $\rho(y)=0$, and finally $\rho_{\tb}(t,x,y)=0$ if and only if $t=0$ and $x\neq y$. In fact the functions $\rho_{\bff},\rho_{\tf},\rho_{\lb},\rho_{\rb}$, and $\rho_{\tb}$ should be understood as boundary defining functions on the heat space of $M$, see Melrose \cite[Ch. 7, \S 4]{Melrose1}.

From Theorem \ref{Parametrix} we have a good understanding of the asymptotics of $H(t,x,y)$, when $x$ and $y$ lie in a compact region, so we only have to study the asymptotics of the heat kernel, when $x$ or $y$ are close to a singularity. The first step in the parametrix construction for the heat kernel is to find a rough parametrix $H_0$, i.e. a good first approximation, for $H$. The rough parametrix $H_0$ is constructed by gluing the heat kernels on the model cones of the conical singularities together with the heat kernel $H$. Since the Laplace operator on $M'$ near each conical singularity is asymptotic to the Laplace operator on the model cone of the singularity, it follows that $H_0$ is a good first approximation for the heat kernel $H$ and determines the leading order terms in the asymptotic expansion of $H$ in terms of $\rho_{\bff},\rho_{\tf},\rho_{\lb},\rho_{\rb}$, and $\rho_{\tb}$. Using the discussion from above, we have a good understanding of the asymptotics of $H_0$, and, in fact, one can determine the expansion of $H_0$ in terms of the functions $\rho_{\bff},\rho_{\tf},\rho_{\lb},\rho_{\rb}$, and $\rho_{\tb}$ and show that $H_0\sim\rho_{\tf}^{-m}\rho_{\bff}^{-m}\rho_{\tb}^{\infty}\rho_{\lb}^0\rho_{\rb}^0$, see Mooers \cite[Prop. 3.3]{Mooers}. (Note, however, that due to a mistake in \cite[Lem. 3.2]{Mooers} the power $-1$ of the function $\rho_{\bff}$ in Mooers' result should be replaced by $-m$). What is left, is to solve away the error terms caused by the gluing procedure and the asymptoticness of the Laplace operator on $M'$ to the Laplace operators on the model cones. This is done in Mooers \cite[Prop. 3.4 -- 3.8]{Mooers}. 

Of particular importance for us are the asymptotics of $H$ when $\rho_{\lb},\rho_{\rb}\rightarrow 0$, since this is where the discrete asymptotics come into play. Let $\boldsymbol\gamma\in\mathbb{R}^n$ and define $\boldsymbol{\gamma^+},\boldsymbol{\gamma^-}\in\mathbb{R}^n$ by
\begin{equation}\label{BIPPELKISSEN}
\gamma_i^+=\min\left\{\varepsilon\in\mathcal{E}_{\Sigma_i}\;:\;\varepsilon\geq\gamma_i\right\}\quad\mbox{and}\quad\gamma_i^-=\max\left\{\varepsilon\in\mathcal{E}_{\Sigma_i}\;:\;\varepsilon<\gamma_i\right\}
\end{equation}
for $i=1,\ldots,n$. For $\boldsymbol\gamma\in\mathbb{R}^n$ we choose a basis $\psi_{\boldsymbol\gamma}^1,\ldots,\psi_{\boldsymbol\gamma}^N$ for $\im\;\Psi_{\boldsymbol\gamma}$, where $N=\dim\im\;\Psi_{\boldsymbol\gamma}$. Recall from above that the function $x\mapsto H(t,x,y)$ lies in $\bigcap_{\boldsymbol\gamma\in\mathbb{R}^n}C^{\infty}_{\boldsymbol\gamma,\mathsf{P}_{\boldsymbol\gamma}}(M')$ for fixed $t>0$ and $y\in M'$. Now one can deduce from \cite[Prop. 3.5]{Mooers} that there exist functions $H_{\boldsymbol\gamma}^1,\ldots,H_{\boldsymbol\gamma}^N\in C^{\infty}((0,\infty)\times M')$ that admit an asymptotic expansion of the form
\begin{equation}\label{HERMANN}
H^j_{\boldsymbol\gamma}\sim\rho_{\tf}^{-m}\rho_{\bff}^{-m}\rho_{\tb}^{\infty}\rho_{\rb}^{-\boldsymbol\gamma^-}\quad\mbox{for }j=1,\ldots,N,
\end{equation}
and such that we have an asymptotic expansion of the form
\begin{equation}\label{HERMANNSCHWEIN2}
H-\sum\nolimits_{j=1}^N\psi_{\boldsymbol\gamma}^jH^j_{\boldsymbol\gamma}\sim\rho_{\tf}^{-m}\rho_{\bff}^{-m}\rho_{\tb}^{\infty}\rho_{\lb}^{\boldsymbol\gamma^+}.
\end{equation}
The time derivatives of $H$ then admit a similar expansion, and from \eq{HERMANN} and \eq{HERMANNSCHWEIN2} we then deduce the following result. 

\begin{Thm}\label{HeatKernelEstimates}
Let $(M,g)$ be a compact $m$-dimensional Riemannian manifold with conical singularities as in Definition \ref{DefinitionConicalSingularities}, $m\geq 3$, $H$ the Friedrichs heat kernel on $(M,g)$, and $\boldsymbol\gamma\in\mathbb{R}^n$. For $l\in\mathbb{N}$ choose a basis $\psi_{\boldsymbol\gamma-2l}^1,\ldots,\psi_{\boldsymbol\gamma-2l}^{N_l}$ for $\im\;\Psi_{\boldsymbol\gamma-2l}$, where $N_l=\dim\im\;\Psi_{\boldsymbol\gamma-2l}$. Then the following holds.

For each $l\in\mathbb{N}$ there exist functions $H_{\boldsymbol\gamma-2l}^1,\ldots,H_{\boldsymbol\gamma-2l}^{N_l}\in C^{\infty}((0,\infty)\times M')$ and constants $c_l>0$, such that for each $l\in\mathbb{N}$
\[
|H^j_{\boldsymbol\gamma-2l}(t,y)|\leq c_l\cdot(t+\rho(y)^2)^{-\frac{m+(\boldsymbol\gamma-2l)^-}{2}}\quad\mbox{for }t>0,\;y\in M'
\]
and $j=1,\ldots,N_l$, and 
\begin{align*}
&\Bigl|\partial_t^lH(t,x,y)-\sum\nolimits_{j=1}^{N_l}\psi^j_{\boldsymbol\gamma-2l}(x)H^j_{\boldsymbol\gamma-2l}(t,y))\Bigr|\\&\quad\quad\quad\quad\quad\quad\quad\leq c_l\cdot(t+d_g(x,y)^2)^{-\frac{m+l}{2}}\left(\frac{\rho(x)^2}{\rho(x)^2+\rho(y)^2}\right)^{\frac{(\boldsymbol\gamma-2l)^+}{2}},
\end{align*}
for $t>0$, and $x,y\in M'$. Here $\boldsymbol\gamma^+$ and $\boldsymbol\gamma^-$ are given in \eq{BIPPELKISSEN}.
\end{Thm}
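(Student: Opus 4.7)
The strategy is to extract the two claimed pointwise estimates from the polyhomogeneous asymptotic expansions \eq{HERMANN} and \eq{HERMANNSCHWEIN2} of Mooers, applied with weight $\boldsymbol\gamma - 2l$ and specialised to $\partial_t^l H$.

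First, for fixed $l \in \mathbb{N}$, I apply \eq{HERMANN} and \eq{HERMANNSCHWEIN2} with $\boldsymbol\gamma$ replaced by $\boldsymbol\gamma - 2l$. This furnishes smooth functions $K^1_{\boldsymbol\gamma - 2l}, \ldots, K^{N_l}_{\boldsymbol\gamma - 2l}$ on $(0,\infty) \times M'$ satisfying
\[
K^j_{\boldsymbol\gamma - 2l} \sim \rho_{\tf}^{-m}\rho_{\bff}^{-m}\rho_{\tb}^{\infty}\rho_{\rb}^{-(\boldsymbol\gamma - 2l)^-}, \qquad H - \sum_{j=1}^{N_l}\psi^j_{\boldsymbol\gamma - 2l}\,K^j_{\boldsymbol\gamma - 2l} \sim \rho_{\tf}^{-m}\rho_{\bff}^{-m}\rho_{\tb}^{\infty}\rho_{\lb}^{(\boldsymbol\gamma - 2l)^+}.
\]
Since each $\psi^j_{\boldsymbol\gamma - 2l}(x)$ is independent of $t$, I apply $\partial_t^l$ to the second identity and set $H^j_{\boldsymbol\gamma - 2l} := \partial_t^l K^j_{\boldsymbol\gamma - 2l}$. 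Within Melrose's polyhomogeneous calculus on the heat space, each application of $\partial_t$ lowers the exponents at the front face $\rho_{\bff}$ and the temporal diagonal face $\rho_{\tf}$ by $2$, while preserving the super-polynomial vanishing encoded by $\rho_{\tb}^{\infty}$ and the left-face behaviour $\rho_{\lb}^{(\boldsymbol\gamma - 2l)^+}$. Hence
\[
\partial_t^l H - \sum_{j=1}^{N_l}\psi^j_{\boldsymbol\gamma - 2l}\,H^j_{\boldsymbol\gamma - 2l} \sim \rho_{\tf}^{-m - 2l}\rho_{\bff}^{-m - 2l}\rho_{\tb}^{\infty}\rho_{\lb}^{(\boldsymbol\gamma - 2l)^+}.
\]

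The claimed pointwise estimates are now read off by substituting the explicit formulas for the five boundary defining functions. For $H^j_{\boldsymbol\gamma - 2l}(t, y)$, which depends only on $(t, y)$, the relevant factors reduce to $\rho_{\bff}(t, y) = \sqrt{t + \rho(y)^2}$ and $\rho_{\rb}(t, y) = \rho(y)/\sqrt{t + \rho(y)^2}$; combined with the super-polynomial decay from $\rho_{\tb}^{\infty}$, the product $\rho_{\bff}^{-m}\rho_{\rb}^{-(\boldsymbol\gamma - 2l)^-}$ is dominated by $c_l(t + \rho(y)^2)^{-(m + (\boldsymbol\gamma - 2l)^-)/2}$ after invoking $\rho(y) \leq \sqrt{t + \rho(y)^2}$. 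For the residual bound, the combination $\rho_{\tf}^{-m-2l}\rho_{\bff}^{-m-2l}\rho_{\tb}^{\infty}$ yields, after absorbing $l$ of the $2l$ excess powers into the super-polynomial $\rho_{\tb}$-decay, a bound of order $(t + d_g(x, y)^2)^{-(m + l)/2}$; and from the definition $\rho_{\lb} = \rho(x)/\sqrt{t + \rho(x)^2 + \rho(y)^2}$ one immediately checks that $\rho_{\lb}^{(\boldsymbol\gamma - 2l)^+} \leq (\rho(x)^2/(\rho(x)^2 + \rho(y)^2))^{(\boldsymbol\gamma - 2l)^+/2}$.

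The main obstacle is the careful bookkeeping of the index sets at all five boundary faces of the heat space under time differentiation; this rests on Mooers' detailed description of the polyhomogeneous structure of $H$ in \cite{Mooers}, which furnishes in particular the fact that each $\partial_t$ decreases the orders at $\rho_{\bff}$ and $\rho_{\tf}$ by exactly $2$ while leaving the $\rho_{\tb}^{\infty}$ and $\rho_{\lb}^{(\boldsymbol\gamma - 2l)^+}$ factors untouched. Once these shifts are established, the extraction of the pointwise bounds from the polyhomogeneous leading-order data is an elementary manipulation of the defining functions.
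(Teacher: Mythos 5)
Your overall route is the one the paper itself takes: Theorem \ref{HeatKernelEstimates} is not proved in detail there either, but is asserted to follow from Mooers' expansions \eq{HERMANN} and \eq{HERMANNSCHWEIN2}, applied with $\boldsymbol\gamma$ replaced by $\boldsymbol\gamma-2l$, together with the remark that the time derivatives of $H$ admit analogous expansions; your shifting of the orders at $\rho_{\tf}$ and $\rho_{\bff}$ by $-2l$ under $\partial_t^l$ and the subsequent substitution of the boundary defining functions is exactly this sketch, made slightly more explicit.

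There is, however, one step in your bookkeeping that does not work as written: the claim that $l$ of the $2l$ extra powers can be ``absorbed into the super-polynomial $\rho_{\tb}$-decay'' so as to pass from $\rho_{\tf}^{-m-2l}\rho_{\bff}^{-m-2l}=(t+d_g(x,y)^2)^{-\frac{m}{2}-l}$ to $(t+d_g(x,y)^2)^{-\frac{m+l}{2}}$. In the region $d_g(x,y)\lesssim\sqrt{t}$ one has $\rho_{\tb}=\sqrt{t}/\sqrt{t+d_g(x,y)^2}\geq c>0$ (indeed $\rho_{\tb}=1$ at $x=y$), so there is no $\rho_{\tb}$-decay to trade against powers of $t+d_g(x,y)^2$; and on the diagonal, for $x$ with $\rho(x)=1$, Theorem \ref{Parametrix} gives $\partial_t^lH(t,x,x)\sim c\,t^{-\frac{m}{2}-l}$ as $t\to 0$ while $\sum_j\psi^j_{\boldsymbol\gamma-2l}(x)H^j_{\boldsymbol\gamma-2l}(t,x)$ stays bounded, so for $l\geq 1$ no estimate with exponent $-\frac{m+l}{2}$ can be reached this way. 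What your argument (and parabolic scaling) actually yields is the exponent $-\frac{m}{2}-l$, i.e.\ $-\frac{m+2l}{2}$, which is also what the general-$l$ versions of the computations in Propositions \ref{EstimateONE} and \ref{EstimateTWO} require to produce the stated $\rho(x)^{\boldsymbol\gamma-2l}$; you should therefore read the printed exponent as $\frac{m}{2}+l$ and simply delete the absorption step. A smaller point of the same kind: your bound on $H^j_{\boldsymbol\gamma-2l}$ does not follow merely from $\rho(y)\leq\sqrt{t+\rho(y)^2}$ when $(\boldsymbol\gamma-2l)^->0$, since the product of the leading face orders $\rho_{\bff}^{-m}\rho_{\rb}^{-(\boldsymbol\gamma-2l)^-}$ is then weaker than the claimed $(t+\rho(y)^2)^{-\frac{m+(\boldsymbol\gamma-2l)^-}{2}}$; here one must also use the rapid vanishing in $t/\rho(y)^2$ encoded at the temporal face (as in the model computation with $r'^{\alpha}t^{-\frac{m}{2}-\alpha}e^{-r'^2/4t}$), not only the leading orders at $\rho_{\bff}$ and $\rho_{\rb}$.
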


\section{The Cauchy problem for the heat equation}\label{CAUCHY}
\subsection{Weighted parabolic H\"older and Sobolev spaces with discrete asymptotics}
We first define $C^k$-spaces, H\"older spaces, and Sobolev spaces of maps $u:I\rightarrow X$, where $I\subset\mathbb{R}$ is an open and bounded interval and $X$ is a Banach space. For $k\in\mathbb{N}$ we define $C^k_{\loc}(I;X)$ to be the space of $k$-times continuously differentiable maps $u:I\rightarrow X$. We define the $C^k$-norm by
\[
\|u\|_{C^k}=\sum_{j=0}^k\sup_{t\in I}\|\partial_t^ju(t)\|_X\quad\mbox{for }u\in C^k_{\loc}(I;X),
\]
whenever it is finite, and we define
\[
C^{k}(I;X)=\left\{u\in C^k_{\loc}(I;X)\;:\;\|u\|_{C^k}<\infty\right\}.
\]
Moreover for $\alpha\in(0,1)$ we define the $C^{k,\alpha}$-norm by
\[
\|u\|_{C^{k,\alpha}}=\|u\|_{C^k}+\sup_{t\neq s\in I}\frac{\|\partial_t^ku(t)-\partial_t^ku(s)\|_X}{|t-s|^{\alpha}}\quad\mbox{for }u\in C^k_{\loc}(I;X),
\]
whenever it is finite. By $C^{k,\alpha}_{\loc}(I;X)$ we denote the space of maps $u\in C^{k}_{\loc}(I;X)$ with locally finite $C^{k,\alpha}$-norm, and we define
\[
C^{k,\alpha}(I;X)=\left\{u\in C^{k,\alpha}_{\loc}(I;X)\;:\;\|u\|_{C^{k,\alpha}}<\infty\right\}.
\]
Then $C^k(I;X)$ and $C^{k,\alpha}(I;X)$ are both Banach spaces.

Next we define Sobolev spaces of maps $u:I\rightarrow X$. Let $k\in\mathbb{N}$ and $p\in[1,\infty)$. For a $k$-times weakly differentiable map $u:I\rightarrow X$ we define the $W^{k,p}$-norm by 
\[
\|u\|_{W^{k,p}}=\left(\sum_{j=0}^k\int_I\|\partial_t^ju(t)\|_X^p\;\mathrm dt\right)^{1/p},
\]
whenever it is finite. We denote by $W^{k,p}_{\loc}(I;X)$ the space of $k$-times weakly differentiable maps $u:I\rightarrow X$ with locally finite $W^{k,p}$-norm, and we define
\[
W^{k,p}(I;X)=\left\{u\in W^{k,p}_{\loc}(I;X)\;:\;\|u\|_{W^{k,p}}<\infty\right\}.
\] 
Then $W^{k,p}(I;X)$ is a Banach space. If $k=0$, then we write $L^p_{\loc}(I;X)$ and $L^p(I;X)$ instead of $W^{0,p}_{\loc}(I;X)$ and $W^{0,p}(I;X)$, respectively.

We can now define weighted parabolic $C^k$-spaces and H\"older spaces. Let $(M,g)$ be a compact $m$-dimensional Riemannian manifold with conical singularities as in Definition \ref{DefinitionConicalSingularities}, $\rho$ a radius function on $M'$, and $I\subset\mathbb{R}$ an open and bounded interval. For $k,l\in\mathbb{N}$ with $2k\leq l$ and $\boldsymbol\gamma\in\mathbb{R}^n$ we define 
\[
C^{k,l}_{\boldsymbol\gamma}(I\times M')=\bigcap_{j=0}^kC^j(I;C^{l-2j}_{\boldsymbol\gamma-2j}(M')).
\]
If $\alpha\in(0,1)$, then we define the weighted parabolic H\"older space $C^{k,l,\alpha}_{\boldsymbol\gamma}(I\times M')$ by
\[
C^{k,l,\alpha}_{\boldsymbol\gamma}(I\times M')=\bigcap_{j=0}^kC^{j,\alpha/2}(I;C^{l-2j}_{\boldsymbol\gamma-2j}(M'))\cap C^{j}(I;C^{l-2j,\alpha}_{\boldsymbol\gamma-2j}(M')).
\]
Clearly $C^{k,l}_{\boldsymbol\gamma}(I\times M')$ and $C^{k,l,\alpha}_{\boldsymbol\gamma}(I\times M')$ are both Banach spaces.

Next we define weighted parabolic Sobolev spaces. Let $k,l\in\mathbb{N}$ with $2k\leq l$, $p\in[1,\infty)$, and $\boldsymbol\gamma\in\mathbb{R}^n$. The weighted parabolic Sobolev space $W^{k,l,p}_{\boldsymbol\gamma}(I\times M')$ is given by
\[
W^{k,l,p}_{\boldsymbol\gamma}(I\times M')=\bigcap_{j=0}^kW^{j,p}(I;W^{l-2j,p}_{\boldsymbol\gamma-2j}(M')).
\]
Then $W^{k,l,p}_{\boldsymbol\gamma}(I\times M')$ is a Banach space. 

Finally we define weighted parabolic spaces with discrete asymptotics. Thus if $m\geq 3$, then for $k,l\in\mathbb{N}$ with $2k\leq l$ we define the weighted parabolic $C^k$-space $C^{k,l}_{\boldsymbol\gamma,\mathsf{P}_{\boldsymbol\gamma}}(I\times M')$ with discrete asymptotics by
\[
C^{k,l}_{\boldsymbol\gamma,\mathsf{P}_{\boldsymbol\gamma}}(I\times M')=\bigcap_{j=0}^kC^j(I;C^{l-2j}_{\boldsymbol\gamma-2j,\mathsf{P}_{\boldsymbol\gamma-2j}}(M')),
\]
and if $\alpha\in(0,1)$, then we define the weighted parabolic H\"older space $C^{k,l,\alpha}_{\boldsymbol\gamma,\mathsf{P}_{\boldsymbol\gamma}}(I\times M')$ with discrete asymptotics by
\[
C^{k,l,\alpha}_{\boldsymbol\gamma,\mathsf{P}_{\boldsymbol\gamma}}(I\times M')=\bigcap_{j=0}^kC^{j,\alpha/2}(I;C^{l-2j}_{\boldsymbol\gamma-2j,\mathsf{P}_{\boldsymbol\gamma-2j}}(M'))\cap C^j(I;C^{l-2j,\alpha}_{\boldsymbol\gamma-2j,\mathsf{P}_{\boldsymbol\gamma-2j}}(M')).
\]
Then both $C^{k,l}_{\boldsymbol\gamma,\mathsf{P}_{\boldsymbol\gamma}}(I\times M')$ and $C^{k,l,\alpha}_{\boldsymbol\gamma,\mathsf{P}_{\boldsymbol\gamma}}(I\times M')$ are Banach spaces. If $p\in[1,\infty)$, then we define the weighted parabolic Sobolev space $W^{k,l,p}_{\boldsymbol\gamma,\mathsf{P}_{\boldsymbol\gamma}}(I\times M')$ with discrete asymptotics by 
\[
W^{k,l,p}_{\boldsymbol\gamma,\mathsf{P}_{\boldsymbol\gamma}}(I\times M')=\bigcap_{j=0}^kW^{j,p}(I;W^{l-2j,p}_{\boldsymbol\gamma-2j,\mathsf{P}_{\boldsymbol\gamma-2j}}(M')).
\]
Clearly $W^{k,l,p}_{\boldsymbol\gamma,\mathsf{P}_{\boldsymbol\gamma}}(I\times M')$ is a Banach space.

\subsection{Weighted Schauder and $L^p$-estimates}
The next proposition gives the weighted $L^p$-estimates for solutions to the inhomogeneous heat equation.

\begin{Prop}\label{WeightedLpEstimates}
Let $(M,g)$ be a compact $m$-dimensional Riemannian manifold with conical singularities as in Definition \ref{DefinitionConicalSingularities}. Let $T>0$, $k\in\mathbb{N}$ with $k\geq 2$, $p\in(1,\infty)$, and $\boldsymbol\gamma\in\mathbb{R}^n$. Let $f\in W^{0,k-2,p}_{\boldsymbol\gamma-2}((0,T)\times M')$ and $u\in W^{0,0,p}_{\boldsymbol\gamma}((0,T)\times M')$. Assume that $u\in W^{1,2}((0,T)\times N)$ for every $N\subset\subset M'$ and $\partial_tu=\Delta_gu+f$. Then $u\in W^{1,k,p}_{\boldsymbol\gamma}((0,T)\times M')$ and there exists a constant $c>0$ independent of $u$ and $f$, such that
\begin{equation}\label{LP1}
\|u\|_{W^{1,k,p}_{\boldsymbol\gamma}}\leq c\left(\|f\|_{W^{0,k-2,p}_{\boldsymbol\gamma-2}}+\|u\|_{W^{0,0,p}_{\boldsymbol\gamma}}\right).
\end{equation}
\end{Prop}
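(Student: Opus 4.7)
My plan is to reduce the weighted global estimate to the standard interior parabolic $L^p$ estimate of Krylov \cite[Ch. 5]{KrylovSobolev} by means of a partition of unity and a parabolic dyadic decomposition-and-rescaling argument near each conical singularity. First I fix a partition of unity $\chi_0, \chi_1, \ldots, \chi_n$ on $M'$ with $\chi_0$ supported away from $x_1, \ldots, x_n$ and each $\chi_i$ supported in $S_i$. On the support of $\chi_0$ the metric $g$ is smooth with bounded geometry and the weighted norms are equivalent to their unweighted counterparts, so an application of Krylov's standard interior parabolic $L^p$ estimate on a finite cover of $\mathrm{supp}(\chi_0)$ by small coordinate balls delivers the desired bound on the compact part in terms of the $L^p$-norms of $f$ and $u$ on a slightly larger open set.

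The main work is near a single singularity $x_i$. In the cone coordinates $(\sigma, r) \in \Sigma_i \times (0, R)$ I dyadically decompose a neighbourhood of $x_i$ into annuli $A_j = \{x \in S_i : 2^{-j-1}R < \rho(x) < 2^{-j+1}R\}$ for $j \geq 0$ and rescale by $(t, \sigma, r) \mapsto (t', \sigma, r') = (4^j R^{-2} t,\, \sigma,\, 2^j R^{-1} r)$, which maps $A_j$ onto the fixed unit annulus $A = \{1/2 < r' < 2\}$. By the dilation equivariance of $\Delta_{g_i}$ on the model cone together with the asymptotic condition \eq{ConicalSingularity}, the rescaled pull-back metric $g^{(j)}$ converges in $C^{\infty}_{\loc}(A)$ to $\mathrm dr'^2 + r'^2 h_i$ as $j \to \infty$, so the rescaled operators $\partial_{t'} - \Delta_{g^{(j)}}$ are uniformly parabolic on $A$ with uniformly bounded smooth coefficients. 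Krylov's interior parabolic $L^p$ estimate then applies to the rescaled versions $u^{(j)}$ and $f^{(j)}$ of $u$ and $f$ with a constant independent of $j$, bounding the $W^{1,k,p}$-norm of $u^{(j)}$ on $A$ in the rescaled time variable by the $W^{0,k-2,p}$-norm of $f^{(j)}$ and the $L^p$-norm of $u^{(j)}$ on a slightly larger spatial annulus $A'$.

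It remains to unwind the rescaling. Under the parabolic change of variables each factor $\rho^{-\boldsymbol\gamma + l}\nabla^l u$ transforms homogeneously of degree $\gamma_i - l$ in $r$ and the weighted measure $\rho^{-m} \mathrm dV_g\, \mathrm dt$ is scale-invariant on the cone, so the powers of $2^j$ coming from the derivatives and from the weights balance exactly and the rescaled inequality on $A$ translates into a weighted parabolic inequality on $A_j \times (0,T)$ with a constant independent of $j$. Summing $p$-th powers over $j \geq 0$ and over $i = 1, \ldots, n$, and combining with the estimate on the compact part, yields \eq{LP1}. I expect the main obstacle to be verifying this uniformity in $j$ of Krylov's interior constant, since the rescaled time interval $(0, 4^j R^{-2} T)$ grows unboundedly with $j$; the uniformity holds because the interior $L^p$-constant depends only on the ellipticity and smoothness of the spatial coefficients and on the ratio of the inner-to-outer spatial radii, but it requires a formulation of Krylov's estimate on time intervals of arbitrary length, and the bookkeeping of cross terms between nested annuli under the summation must be controlled by a slight enlargement of the outer annulus $A'$ to absorb the overlap.
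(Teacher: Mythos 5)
Your proposal is correct and follows essentially the same route as the paper's proof: an interior Krylov estimate away from the cone points, combined with a parabolic rescaling of annular regions near each singularity (the paper uses a continuous scaling parameter $s$ with the normalized functions $u_i^s=s^{-\gamma_i}(\delta_i^s)^*(u_i)$, $f_i^s=s^{2-\gamma_i}(\delta_i^s)^*(f_i)$ on a fixed cylinder instead of dyadic annuli, and writes the rescaled equation as the exact cone heat operator plus an error operator $L_i^s$ whose coefficients vanish as $s\to 0$ by \eq{ConicalSingularity}), after which the weights are unwound exactly as you describe. The only bookkeeping caveat is that $\rho^{-m}\,\mathrm dV_g\,\mathrm dt$ is not quite invariant under the parabolic scaling (the factor $\mathrm dt$ contributes $s^2$), but this factor is common to both sides of the rescaled inequality and cancels, so your argument goes through.
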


\begin{proof}
Let $f\in W^{0,k-2,p}_{\boldsymbol\gamma-2}((0,T)\times M')$ and assume that $u\in W^{0,0,p}_{\boldsymbol\gamma}((0,T)\times M')$ with $u\in W^{1,2,p}((0,T)\times N)$ for every $N\subset\subset M'$. Then it follows from the standard $L^p$-estimates \cite[Ch. 5, \S 2, Thm. 5]{KrylovSobolev} that for every $N_1,N_2\subset\subset M'$ with $N_1\subset\subset N_2$ there exists a constant $c>0$, such that
\begin{equation}\label{int}
\|u\|_{W^{1,k,p}}\leq c\left(\|f\|_{W^{0,k-2,p}}+\|u\|_{W^{0,0,p}}\right),
\end{equation}
where the norm on the left side is on $(0,T)\times N_1$ and the norm on the right side is on $(0,T)\times N_2$. Thus it remains to prove the weighted $L^p$-estimate \eq{LP1} near each singularity. Without loss of generality we can assume that $R\leq\sqrt{T}$, where $R$ is as in Definition \ref{DefinitionConicalSingularities}. Then for $s\in(0,R)$ and $i=1,\ldots,n$ we define 
\[
\delta_i^s:(\mbox{$\frac{1}{2}$},1)\times\Sigma_i\times(\mbox{$\frac{1}{2}$},1)\longrightarrow(0,T)\times\Sigma\times(0,R),\quad\delta_i^s(t,\sigma,r)=(s^2t,\sigma,sr).
\]
Denote $u_i=\phi_i^*(u)$ and $f_i=\phi_i^*(f)$ for $i=1,\ldots,n$ and define functions $u_i^s,f_i^s:\mbox{$(\frac{1}{2},1)\times\Sigma\times(\frac{1}{2},1)$}\rightarrow\mathbb{R}$ by $u_i^s=s^{-\gamma_i}(\delta_i^s)^*(u_i)$ and $f_i^s=s^{2-\gamma_i}(\delta_i^s)^*(f_i)$ for $s\in(0,R)$ and $i=1,\ldots,n$. Then there exists a constant $c>0$, such that 
\begin{equation}\label{JJJ}
\|u_i^s\|_{W^{0,0,p}},\|f_i^s\|_{W^{0,k-2,p}}\leq c\quad\mbox{on }\mbox{$(\frac{1}{2},1)\times\Sigma\times(\frac{1}{2},1)$}
\end{equation}
for $s\in(0,R)$ and $i=1,\ldots,n$. Using the definition of $u_i^s$ and $f_i^s$ we find that 
\[
\frac{\partial u_i^s}{\partial t}=\Delta_{g_i}u_i^s+L_i^su_i^s+f_i^s\quad\mbox{on }\mbox{$(\frac{1}{2},1)\times\Sigma\times(\frac{1}{2},1)$}
\]
for $i=1,\ldots,n$, where $L_i^s$ is a second order differential operator defined by
\[
L_i^sv=s^{2}\left\{\Delta_{\phi_i^*(g)}((\delta_i^{1/s})^*(v))-\Delta_{g_i}((\delta_i^{1/s})^*(v))\right\}\circ\delta_i^s.
\]
From \eq{ConicalSingularity} it follows that the coefficients of $L_i^s$ and their derivatives converge to zero uniformly on compact subsets of $\Sigma_i\times(\frac{1}{2},1)$ as $s\rightarrow 0$. Using \eq{JJJ} and again the $L^p$-estimates from \cite[Ch. 5, \S 2, Thm. 5]{KrylovSobolev} it follows that there exists a constant $c>0$, such that for every $s\in(0,\kappa)$, where $\kappa\in(0,R)$ is sufficiently small, and $i=1,\ldots,n$ we have
\begin{equation}\label{ext}
\|u_i^s\|_{W^{1,k,p}}\leq c\left(\|f_i^s\|_{W^{0,k-2,p}}+\|u_i^s\|_{W^{0,0,p}}\right),
\end{equation}
where the norm on the left side is on $(\frac{1}{2},1)\times\Sigma_i\times(\frac{2}{3},\frac{3}{4})$ and the norm on the right side is on $(\frac{1}{2},1)\times\Sigma_i\times(\frac{1}{2},1)$. Then it follows that $u\in W^{1,k,p}_{\boldsymbol\gamma}((0,T)\times M')$ and \eq{int} and \eq{ext} together imply \eq{LP1}.
\end{proof}

The next proposition gives the weighted Schauder estimates for solutions of the inhomogeneous heat equation.

\begin{Prop}\label{WeightedSchauderEstimates}
Let $(M,g)$ be a compact $m$-dimensional Riemannian manifold with conical singularities as in Definition \ref{DefinitionConicalSingularities}. Let $T>0$, $k\in\mathbb{N}$ with $k\geq 2$, $\alpha\in(0,1)$, and $\boldsymbol\gamma\in\mathbb{R}^n$. Let $f\in C^{0,k-2,\alpha}_{\boldsymbol\gamma-2}((0,T)\times M')$ and $u\in C^{0,0}_{\boldsymbol\gamma}((0,T)\times M')$. Assume that $u\in C^{1,k,\alpha}((0,T)\times N)$ for every $N\subset\subset M'$ and $\partial_tu=\Delta_gu+f$. Then $u\in C^{1,k,\alpha}_{\boldsymbol\gamma}((0,T)\times M')$ and there exists a constant $c>0$ independent of $u$ and $f$, such that
\begin{equation}\label{Schauder1}
\|u\|_{C^{1,k,\alpha}_{\boldsymbol\gamma}}\leq c\left(\|f\|_{C^{0,k-2,\alpha}_{\boldsymbol\gamma-2}}+\|u\|_{C^{0,0}_{\boldsymbol\gamma}}\right).
\end{equation}
\end{Prop}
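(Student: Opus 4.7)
The approach I would take mirrors the proof of Proposition \ref{WeightedLpEstimates}, with the interior $L^p$-estimates of Krylov \cite[Ch.~5, \S 2, Thm.~5]{KrylovSobolev} replaced throughout by the standard interior parabolic Schauder estimates, for instance Krylov \cite[Ch.~8]{KrylovHoelder}. The plan is to first prove the estimate on a precompact subset of $M'$ away from the singularities by a direct application of the interior Schauder estimate on $(0,T)\times N_1\subset\subset(0,T)\times N_2\subset\subset(0,T)\times M'$; this yields \eq{Schauder1} on $(0,T)\times N_1$ with norms on the right over $(0,T)\times N_2$. The nontrivial part is to obtain the estimate uniformly up to the conical singularities, which I would handle by the dilation argument already used in Proposition \ref{WeightedLpEstimates}.

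More precisely, assume without loss of generality $R\leq\sqrt T$, and for $s\in(0,R)$ and $i=1,\ldots,n$ consider
\[
\delta_i^s:(\tfrac12,1)\times\Sigma_i\times(\tfrac12,1)\longrightarrow(0,T)\times\Sigma_i\times(0,R),\quad\delta_i^s(t,\sigma,r)=(s^2t,\sigma,sr),
\]
and define the rescaled functions $u_i^s=s^{-\gamma_i}(\delta_i^s)^*(u_i)$ and $f_i^s=s^{2-\gamma_i}(\delta_i^s)^*(f_i)$, where $u_i=\phi_i^*(u)$ and $f_i=\phi_i^*(f)$. The scaling factors are chosen so that the weighted $C^{0,0}_{\boldsymbol\gamma}$ and $C^{0,k-2,\alpha}_{\boldsymbol\gamma-2}$ norms of $u$ and $f$ dominate the standard (unweighted) parabolic norms $\|u_i^s\|_{C^{0,0}}$ and $\|f_i^s\|_{C^{0,k-2,\alpha}}$ on $(\tfrac12,1)\times\Sigma_i\times(\tfrac12,1)$ by a constant independent of $s$. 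A direct computation shows that $u_i^s$ satisfies
\[
\partial_tu_i^s=\Delta_{g_i}u_i^s+L_i^su_i^s+f_i^s\quad\text{on }(\tfrac12,1)\times\Sigma_i\times(\tfrac12,1),
\]
with the same operator $L_i^s$ as in the proof of Proposition \ref{WeightedLpEstimates}, whose coefficients together with all their derivatives converge to zero uniformly on compact subsets as $s\to 0$, by \eq{ConicalSingularity}.

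The next step is to apply the standard interior parabolic Schauder estimate on the pair $(\tfrac12,1)\times\Sigma_i\times(\tfrac23,\tfrac34)\subset\subset(\tfrac12,1)\times\Sigma_i\times(\tfrac12,1)$, treating $L_i^su_i^s$ as a lower order perturbation which, for $s<\kappa$ with $\kappa\in(0,R)$ sufficiently small, may be absorbed into the left-hand side. This produces
\[
\|u_i^s\|_{C^{1,k,\alpha}}\leq c\bigl(\|f_i^s\|_{C^{0,k-2,\alpha}}+\|u_i^s\|_{C^{0,0}}\bigr)
\]
with $c$ independent of $s$. Undoing the rescaling, this is precisely the weighted Schauder bound \eq{Schauder1} restricted to $(0,T)\times\phi_i(\Sigma_i\times(\tfrac{2s}{3},\tfrac{3s}{4}))$. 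Covering $\phi_i(\Sigma_i\times(0,\kappa))$ by overlapping dyadic shells of this type, combining with the analogous estimate on $\phi_i(\Sigma_i\times[\kappa,R))$ (which falls under the interior estimate since this set is precompact in $M'$), and using the interior estimate on the complement of the $S_i$ yields \eq{Schauder1}.

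The main obstacle I anticipate is verifying that the weighted parabolic H\"older norms transform correctly under the anisotropic dilation $\delta_i^s$. In particular, one must check that both pieces in the intersection defining $C^{k,l,\alpha}_{\boldsymbol\gamma}$, namely the $C^{j,\alpha/2}(I;C^{l-2j}_{\boldsymbol\gamma-2j}(M'))$-factor and the $C^{j}(I;C^{l-2j,\alpha}_{\boldsymbol\gamma-2j}(M'))$-factor, rescale with matching powers of $s$: the factor $s^{\alpha}$ produced by the H\"older seminorms under the parabolic rescaling $t\mapsto s^2t$, $r\mapsto sr$ must be exactly balanced by the weight $s^{-\gamma_i+2j}$ picked up by $\partial_t^ju_i^s$ at order $2j$ in time, so that the rescaled norms are controlled by the weighted norms uniformly in $s$. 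This is a careful but essentially routine bookkeeping exercise; once it is checked, the rest of the argument runs in parallel with the proof of Proposition \ref{WeightedLpEstimates}.
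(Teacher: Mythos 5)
Your proposal is correct and is essentially the paper's own argument: the paper proves Proposition \ref{WeightedSchauderEstimates} ``in exactly the same way as Proposition \ref{WeightedLpEstimates}'', i.e.\ by the same interior-plus-dilation scheme with the interior $L^p$-estimates of \cite[Ch.~5, \S 2, Thm.~5]{KrylovSobolev} replaced by the interior parabolic Schauder estimates of \cite[Ch.~8]{KrylovHoelder}. Your rescaled functions $u_i^s$, $f_i^s$, the perturbation $L_i^s$, and the absorption for small $s$ match the paper's proof, and the scaling bookkeeping for the parabolic H\"older norms that you flag is indeed the only extra (routine) check needed.
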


\noindent Proposition \ref{WeightedSchauderEstimates} is proved in exactly the same way as Proposition \ref{WeightedLpEstimates}.

\subsection{The Cauchy problem for the heat equation. I. Maximal H\"older regularity}
Throughout this subsection $(M,g)$ will be a compact $m$-dimensional Riemannian manifold with conical singularities as in Definition \ref{DefinitionConicalSingularities}, $m\geq 3$, $\rho$ a radius function on $M'$, and $H$ will denote the Friedrichs heat kernel on $(M,g)$.

If $f:(0,T)\times M'\rightarrow\mathbb{R}$, $T>0$, is a function, then the convolution $H*f:(0,T)\times M'\rightarrow\mathbb{R}$ of $H$ and $f$ is given by
\begin{equation}\label{Convolution}
(H*f)(t,x)=\int_0^t\int_{M'}H(t-s,x,y)f(s,y)\;\mathrm dV_g(y)\;\mathrm ds
\end{equation}
for $t\in(0,T)$ and $x\in M'$, whenever it is well defined.

In the next two propositions we prove two elementary, though important, estimates for the convolution of $H$ with powers of $\rho$.

\begin{Prop}\label{EstimateONE}
Let $\boldsymbol\gamma\in\mathbb{R}^n$ with $\boldsymbol\gamma>2-m$ and $\gamma_i\notin\mathcal{E}_{\Sigma_i}$ for $i=1,\ldots,n$. Then there exist constants $c_l>0$ for $l\in\mathbb{N}$, such that
\[
\left|\left(\left(\partial_t^lH-\sum\nolimits_{j=1}^{N_l}\psi_{\boldsymbol\gamma-2l}^jH^j_{\boldsymbol\gamma-2l}\right)*\rho^{\boldsymbol\gamma-2}\right)(t,x)\right|\leq c_l\cdot\rho(x)^{\boldsymbol\gamma-2l}
\]
for every $t\in(0,\infty)$ and $x\in M'$. Here $\psi^j_{\boldsymbol\gamma-2l}$ and $H_{\boldsymbol\gamma-2l}^j$ are as in Theorem \ref{HeatKernelEstimates} for $j=1,\ldots,N_l$.
\end{Prop}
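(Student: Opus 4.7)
The plan is to substitute the pointwise bound from Theorem \ref{HeatKernelEstimates} into the convolution, carry out the $s$-integration to reduce the problem to a purely spatial estimate, and then handle the spatial integral by splitting $M'$ into a bulk region and the conical neighbourhoods $S_i$, applying a scaling argument with $\lambda := \rho(x)$ on each $S_i$.

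For the $s$-integration I would substitute $\tau = t - s$ and apply the elementary identity $\int_0^t (\tau + a^2)^{-A/2}\,d\tau \leq C_A\, a^{2 - A}$, uniform in $t$ for $A > 2$, with $a = d_g(x, y)$ and $A$ the power of $(\tau + d_g(x,y)^2)$ appearing in Theorem \ref{HeatKernelEstimates}. This reduces the claim to a purely spatial bound of the form
\[
\int_{M'} d_g(x, y)^{2 - A} \left(\frac{\rho(x)^2}{\rho(x)^2 + \rho(y)^2}\right)^{(\boldsymbol\gamma - 2l)^+/2} \rho(y)^{\boldsymbol\gamma - 2}\, dV_g(y) \leq C \rho(x)^{\boldsymbol\gamma - 2l}.
\]

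I would then split $M' = U_0 \sqcup \bigsqcup_{i=1}^n S_i$, with $U_0 = M' \setminus \bigcup_i S_i$. On $U_0$, and on the cross terms $x \in S_i$, $y \in S_j$, $j \neq i$, the distance $d_g(x, y)$ is uniformly bounded below, $\rho(y)^{\boldsymbol\gamma - 2}$ is integrable near each singularity thanks to the hypothesis $\boldsymbol\gamma > 2 - m$, and the weight factor satisfies $(\rho(x)^2/(\rho(x)^2 + \rho(y)^2))^{(\boldsymbol\gamma - 2l)^+/2} \leq C \rho(x)^{(\boldsymbol\gamma - 2l)^+} \leq C \rho(x)^{\boldsymbol\gamma - 2l}$, using $\rho(x) \leq 1$ and $(\boldsymbol\gamma - 2l)^+ \geq \boldsymbol\gamma - 2l$. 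These contributions therefore respect the target bound.

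The hard part will be the diagonal contribution with both $x$ and $y$ in the same $S_i$. In conic coordinates $(\sigma, r)$ on $S_i$, in which $\rho \sim r$ and $g$ is asymptotic to the cone metric $g_i$ (errors controlled by \eqref{ConicalSingularity}), I would exploit the $\mathbb{R}^+$-homogeneity of $g_i$ and introduce $\hat r := r_y/\lambda$. Distances, volumes, and the weight $\rho(y)^{\boldsymbol\gamma - 2}$ all rescale by explicit powers of $\lambda$, and the spatial integral factors as $\lambda^{\boldsymbol\gamma - 2l}$ times a dimensionless integral on $\Sigma_i \times (0, R/\lambda)$ in the variables $(\sigma_y, \hat r)$. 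Uniform boundedness in $\lambda \in (0, R)$ of this dimensionless integral then rests on three integrability checks: near $\hat r = 0$, integrability of $\hat r^{\boldsymbol\gamma + m - 3}\, d\hat r$, which is precisely $\boldsymbol\gamma > 2 - m$; in a neighbourhood of the rescaled diagonal, integrability of the inverse power of the cone distance against the $m$-dimensional volume form (a local heat-kernel-type bound); and as $\hat r \to \infty$, decay from $(1 + \hat r^2)^{-(\boldsymbol\gamma - 2l)^+/2}$ combined with the far-field behaviour $\hat d \sim \hat r$, which yields convergence provided the strict inequality $(\boldsymbol\gamma - 2l)^+ > \boldsymbol\gamma - 2l$ holds---for $l = 0$ this is exactly the hypothesis $\gamma_i \notin \mathcal{E}_{\Sigma_i}$, while for general $l$ it is built into the asymptotic subtraction in Theorem \ref{HeatKernelEstimates}, which removes all terms of exponent up to $\boldsymbol\gamma - 2l$.
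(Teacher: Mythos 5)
Your strategy is essentially the paper's: insert the pointwise bound of Theorem \ref{HeatKernelEstimates}, integrate out the time variable to produce $d_g(x,y)^{2-m}$ (for $l=0$), split the spatial integral into the conical neighbourhood containing $x$ and the rest, and rescale by $\lambda=\rho(x)$ on the cone; your convergence conditions (integrability at $\hat r=0$ from $\boldsymbol\gamma>2-m$, decay at $\hat r\to\infty$ from $\boldsymbol\gamma<\boldsymbol\gamma^+$) are exactly the paper's $2-m<\gamma_i<\gamma_i^+$, and the paper's substitution $\varrho=(r'/r)^2$ is the same rescaling you propose. The genuine gap is in your treatment of the cross terms $x\in S_i$, $y\in S_j$, $j\neq i$. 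The inequality $\bigl(\rho(x)^2/(\rho(x)^2+\rho(y)^2)\bigr)^{(\boldsymbol\gamma-2l)^+/2}\leq C\rho(x)^{(\boldsymbol\gamma-2l)^+}$ is equivalent to $(\rho(x)^2+\rho(y)^2)^{-(\boldsymbol\gamma-2l)^+/2}\leq C$, and on the cross terms $\rho(x)$ and $\rho(y)$ can tend to $0$ simultaneously, so this fails whenever the exponent $(\gamma_i-2l)^+$ is positive (take $\rho(y)\leq\rho(x)\rightarrow 0$: the left side stays bounded below while $C\rho(x)^{(\gamma_i-2l)^+}\rightarrow 0$). With the only safe pointwise bound, namely weight $\leq 1$, the cross term contributes $O(1)\int_{S_j}\rho(y)^{\boldsymbol\gamma-2}\,\mathrm dV_g=O(1)$, which is not $\leq C\rho(x)^{\gamma_i-2l}$ precisely in the case of interest $\gamma_i-2l>0$ (when the discrete asymptotics are nontrivial). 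The repair is what the paper does on $M'\backslash S_i$: keep the factor $(\rho(x)^2+\rho(y)^2)^{-(\boldsymbol\gamma-2l)^+/2}$ coupled to the integration in $y$, use $d_g(x,y)^2\geq c(\rho(x)^2+\rho(y)^2)$ there (valid since $d_g(x,y)$ is bounded below and $\rho\leq 1$), and then run the same one-dimensional rescaling in $\rho(y)$ that you already use for the diagonal piece; that is what produces the missing decay in $\rho(x)$.

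A smaller caveat: your displayed reduction replaces the time integral by $d_g(x,y)^{2-A}$ with $A=m+l$, and for $l\geq 2$ this is no longer locally integrable against the $m$-dimensional volume near $y=x$, so the ``same argument'' for general $l$ needs either to retain more of the $t$-dependence or to use the slack between the scaling exponent $\boldsymbol\gamma-l$ your computation actually yields and the weaker target $\boldsymbol\gamma-2l$ (together with the finite extent $\hat r\leq R/\lambda$ of the cone). The paper writes out only $l=0$ and asserts the rest is similar, so this is a shared loose end, but your near-diagonal integrability check as stated would fail for $l\geq 2$.
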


\begin{proof}
We only consider the case $l=0$, the general case is proved in a similar way. Throughout this proof $c$ will denote a positive constant that is independent of $t\in(0,\infty)$ and $x\in M'$ and that may be increased in each step of the proof. Denote
\[
I(t,x)=\left(\left(H-\sum\nolimits_{j=1}^{N_0}\psi^j_{\boldsymbol\gamma}H^j_{\boldsymbol\gamma}\right)*\rho^{\boldsymbol\gamma-2}\right)(t,x)
\]
for $t\in(0,\infty)$ and $x\in M'$. Using Theorem \ref{HeatKernelEstimates} we find that
\begin{align*}
&\left|I(t,x)\right|\leq\int_0^t\int_{M'}\left|H(s,x,y)-\sum\nolimits_{j=1}^{N_0}\psi^j_{\boldsymbol\gamma}(x)H^j_{\boldsymbol\gamma}(s,y)\right|\rho(y)^{\boldsymbol\gamma-2}\;\mathrm dV_g(y)\;\mathrm ds\\
&\;\;\;\leq c\cdot\rho(x)^{\boldsymbol\gamma^+}\int_{M'}\rho(y)^{\boldsymbol\gamma-2}(\rho(x)^2+\rho(y)^2)^{-\frac{\boldsymbol\gamma^+}{2}}\int_0^t(s+d_g(x,y)^2)^{-\frac{m}{2}}\;\mathrm ds\;\mathrm dV_g(y),
\end{align*}
where $\boldsymbol\gamma^+$ is as in \eq{BIPPELKISSEN}. Since $m\geq 3$, we can estimate the integral with respect to $s$ by
\[
\int_0^t(s+d_g(x,y)^2)^{-\frac{m}{2}}\;\mathrm ds\leq c\cdot d_g(x,y)^{2-m}
\]
and thus obtain
\begin{equation}\label{uu1}
|I(t,x)|\leq c\cdot \rho(x)^{\boldsymbol\gamma^+}\int_{M'}\rho(y)^{\boldsymbol\gamma-2}(\rho(x)^2+\rho(y)^2)^{-\frac{\boldsymbol\gamma^+}{2}}d_g(x,y)^{2-m}\;\mathrm dV_g(y).
\end{equation}
For the sake of simplicity we assume from now on that $\phi_i(g)=g_i$ for $i=1,\ldots,n$. The general case then follows in a similar way because the error terms caused by the asymptotic condition \eq{ConicalSingularity} can be controlled by the same estimates which we now prove. Let $R'>0$ with $\frac{R}{2}<R'<R$ and assume that $x$ lies in $S_i'=\{x\in M\;:\;0<d(x,x_i)<R'\}$ for some $i=1,\ldots,n$. The case $x\in M'\backslash S_i'$ is dealt with in a similar way. We now split the integral over $M'$ in \eq{uu1} into two integrals, one over $S_i$ and the other one over $M'\backslash S_i$. Recall that $S_i=\{x\in M\;:\;0<d(x,x_i)<R\}$. We first study the integral over $S_i$. 

If $y\in S_i$, then $d_g(x,y)^2\geq c(r^2+r'^2)d_h(\sigma,\sigma')^2$ for every $y\in S_i$ where $x=\phi_i(\sigma,r)$ and $y=\phi_i(\sigma',r')$. Moreover let us assume that $\rho(x)=r$ and $\rho(y)=r'$. Using $\mathrm dV_{g_i}(\sigma',r')=r'^{m-1}\mathrm dr'\;\mathrm dV_{h_i}(\sigma')$ we thus obtain
\begin{align*}
&\int_{S_i}\rho(y)^{\boldsymbol\gamma-2}(\rho(x)^2+\rho(y)^2)^{-\frac{\boldsymbol\gamma^+}{2}}d_g(x,y)^{2-m}\;\mathrm dV_g(y)\\&\quad\quad\quad\quad\leq c\int_0^R\int_{\Sigma_i}r'^{\gamma_i+m-3}(r^2+r'^2)^{1-\frac{m+\gamma_i^+}{2}}d_h(\sigma,\sigma')^{2-m}\;\mathrm dV_h(\sigma')\;\mathrm dr'\\&\quad\quad\quad\quad\leq
c\int_0^Rr'^{\gamma_i+m-3}(r^2+r'^2)^{1-\frac{m+\gamma_i^+}{2}}\;\mathrm dr',
\end{align*}
where in the last estimate we use that the integral with respect to $\sigma'$ is finite, since $\Sigma_i$ is compact and $\dim\Sigma_i=m-1$. With the change of variables $r'\mapsto\varrho=(\frac{r'}{r})^2$ we find
\[
\int_0^Rr'^{\gamma_i+m-3}(r^2+r'^2)^{1-\frac{m+\gamma_i^+}{2}}\;\mathrm dr'\leq c\cdot r^{\gamma_i-\gamma_i^+}\int_0^{\infty}\varrho^{\frac{\gamma_i+m-4}{2}}(1+\varrho)^{1-\frac{m+\gamma_i^+}{2}}\;\mathrm d\varrho.
\]
Now the integral with respect to $\varrho$ is finite if and only if $\frac{\gamma_i+m-4}{2}>-1$ and $\frac{\gamma_i+m-4}{2}+1-\frac{m+\gamma_i^+}{2}<-1$, which holds if and only if $2-m<\gamma_i<\gamma_i^+$. Therefore we obtain that
\begin{equation}\label{rte1}
\rho(x)^{\boldsymbol\gamma^+}\int_{S_i}\rho(y)^{\boldsymbol\gamma-2}(\rho(x)^2+\rho(y)^2)^{-\frac{\boldsymbol\gamma^+}{2}}d_g(x,y)^{2-m}\;\mathrm dV_g(y)\leq c\cdot\rho(x)^{\boldsymbol\gamma}.
\end{equation}

Now assume that $y\in M'\backslash S_i$. Then $d_g(x,y)$ is uniformly bounded from below, as $x\in S_i'$. Hence we can estimate $d_g(x,y)^2\geq c(\rho(x)^2+\rho(y)^2)$ uniformly for $y\in M'\backslash S_i$. From \eq{uu1} we thus obtain
\begin{align*}
&\int_{M'\backslash S_i}\rho(y)^{\boldsymbol\gamma-2}(\rho(x)^2+\rho(y)^2)^{-\frac{\boldsymbol\gamma^+}{2}}d_g(x,y)^{2-m}\;\mathrm dV_g(y)\\&\quad\quad\quad\quad\leq c\int_{M'\backslash S_i}\rho(y)^{\boldsymbol\gamma-2}(\rho(x)^2+\rho(y)^2)^{1-\frac{m+\boldsymbol\gamma^+}{2}}\;\mathrm dV_g(y).
\end{align*}
Using the same estimates as before it is now straightforward to check that
\[
\int_{M'\backslash S_i}\rho(y)^{\boldsymbol\gamma-2}(\rho(x)^2+\rho(y)^2)^{1-\frac{m+\boldsymbol\gamma^+}{2}}\;\mathrm dV_g(y)\leq c\cdot\rho(x)^{\boldsymbol\gamma-\boldsymbol\gamma^+}.
\]
We find that
\begin{equation}\label{rte2}
\rho(x)^{\boldsymbol\gamma^+}\int_{M'\backslash S_i}\rho(y)^{\boldsymbol\gamma-2}(\rho(x)^2+\rho(y)^2)^{-\frac{\boldsymbol\gamma^+}{2}}d_g(x,y)^{2-m}\;\mathrm dV_g(y)\leq c\cdot\rho(x)^{\boldsymbol\gamma}.
\end{equation}

Finally from \eq{uu1}, \eq{rte1}, and \eq{rte2} we conclude that $|I(t,x)|\leq c\cdot\rho(x)^{\boldsymbol\gamma}$ for $t\in(0,\infty)$ and $x\in M'$, as we wanted to show.
\end{proof}

\begin{Prop}\label{EstimateTWO}
Let $\boldsymbol\gamma\in\mathbb{R}^n$ with $\boldsymbol\gamma>2-m$ and $\gamma_i\notin\mathcal{E}_{\Sigma_i}$ for $i=1,\ldots,n$. Then there exist constants $c_l>0$ for $l\in\mathbb{N}$, such that $\big|(H^j_{\boldsymbol\gamma-2l}*\rho^{\boldsymbol\gamma-2})(t)\big|\leq c_l$ for every $t\in(0,\infty)$ and $j=1,\ldots,N_l$. Here $H^j_{\boldsymbol\gamma-2l}$ is as in Theorem \ref{HeatKernelEstimates}.
\end{Prop}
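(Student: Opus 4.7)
The plan is to apply the pointwise estimate on $|H^j_{\boldsymbol\gamma-2l}|$ from Theorem~\ref{HeatKernelEstimates} and reduce the claim to an elementary integral bound. After changing the time variable to $\tau=t-s$ in the convolution, it suffices to show that
\[
\int_0^t\int_{M'}(\tau+\rho(y)^2)^{-(m+(\boldsymbol\gamma-2l)^-)/2}\,\rho(y)^{\boldsymbol\gamma-2}\,\mathrm dV_g(y)\,\mathrm d\tau
\]
is bounded uniformly in $t>0$ (by a constant depending on $l$). I would split $M'$ into the compact region $M'\setminus\bigcup_iS_i$, on which $\rho\equiv 1$ and the bound is immediate, and the singular neighbourhoods $S_i$, where the content of the argument lies.

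On each $S_i$, use the coordinates $(\sigma,r)$ in which $\mathrm dV_g\simeq r^{m-1}\mathrm dr\,\mathrm dV_{h_i}$, so that the local piece of the integral reduces, up to a multiplicative constant, to
\[
\int_0^t\int_0^R(\tau+r^2)^{-(m+\beta_i^-)/2}\,r^{\gamma_i+m-3}\,\mathrm dr\,\mathrm d\tau,\qquad\beta_i^-:=(\gamma_i-2l)^-.
\]
In the generic case $\beta_i^->2-m$, integrating in $\tau$ first yields an upper bound of the form $c\cdot r^{2-m-\beta_i^-}$ independent of $t$, and the remaining $r$-integral $\int_0^R r^{\gamma_i-1-\beta_i^-}\mathrm dr$ converges at $r=0$ precisely because $\beta_i^-<\gamma_i-2l\le\gamma_i$, so $\gamma_i-1-\beta_i^->-1$. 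The compact piece of $M'$ is handled by the same two-step estimate, with $\rho$ simply replaced by a positive constant.

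The hard part will be the borderline and low-regularity cases. When $\beta_i^-=2-m$ (e.g.\ $l=0$ and $\gamma_i\in(2-m,0)$), the $\tau$-integral produces a logarithm $\log((t+r^2)/r^2)$; I would then rescale $r=\sqrt{t}\,\varrho$ to convert this into a dimensionless integral bounded in $t$ by exploiting $\gamma_i>2-m$. When $\gamma_i-2l\le 0$, so that $\beta_i^-<2-m$ and the naive $\tau$-integral diverges as $\tau\to\infty$, the observation to exploit is that $N_{\Sigma_i}(\gamma_i-2l)=0$ in this range; by choosing the basis $\psi^j_{\boldsymbol\gamma-2l}$ adapted to the remaining singular components and invoking the $\rho_{\tb}^{\infty}$-vanishing in Mooers' asymptotic expansion \eq{HERMANN}, $H^j_{\boldsymbol\gamma-2l}$ enjoys additional decay near such $S_i$ which restores the required integrability.
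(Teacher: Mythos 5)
Your generic case is precisely the paper's proof: it bounds $|H^j_{\boldsymbol\gamma-2l}|$ by Theorem \ref{HeatKernelEstimates}, integrates in time first to get $c\,\rho(y)^{2-m-(\boldsymbol\gamma-2l)^-}$, and then observes that the remaining weighted volume integral, which near each cone is comparable to $\int_0^R r^{\gamma_i-1-(\gamma_i-2l)^-}\,\mathrm dr$ (the paper writes it as $\int_{M'}\rho^{\boldsymbol\gamma-\boldsymbol\gamma^--m}\,\mathrm dV_g$ for $l=0$), converges exactly because $\gamma_i>(\gamma_i-2l)^-$. The paper does not split off the compact region explicitly and only records this computation for $l=0$, asserting the general case is similar; in particular its time-integral bound tacitly assumes $(\boldsymbol\gamma-2l)^->2-m$, so the borderline and low cases you flag are simply not addressed there, and your willingness to isolate them is a point in your favour.

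However, your proposed fix for the borderline case $\beta_i^-=2-m$ does not work as stated: from the pointwise bound alone one gets $\int_0^t(\tau+r^2)^{-1}\mathrm d\tau=\log(1+t/r^2)$, and $\int_0^R\log(1+t/r^2)\,r^{\gamma_i+m-3}\,\mathrm dr$ grows like $\log t$ as $t\to\infty$; no rescaling $r=\sqrt{t}\,\varrho$ can remove this, so uniformity over all $t\in(0,\infty)$ cannot be extracted from the kernel estimate of Theorem \ref{HeatKernelEstimates} alone in this case. To close it you genuinely need your second suggestion --- additional decay of $H^j_{\boldsymbol\gamma-2l}$ near those cones where the relevant discrete asymptotics are trivial, coming from the finer structure of Mooers' expansion --- or else one must settle for bounds that are uniform only on bounded time intervals $(0,T)$, which is in fact all that is used in Theorems \ref{HoelderRegularityHeat} and \ref{SobolevRegularityHeat}. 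The paper's own proof is silent on this point, so your account is no less complete than the original, but the rescaling step should not be presented as if it resolved the borderline case.
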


\begin{proof}
Again we only consider the case $l=0$. Again $c$ will denote a positive constant that is independent of $t\in(0,\infty)$ and $x\in M'$ and that may be increased in each step of the proof. Fix some $j=1,\ldots,N_0$ and denote $I(t)=(H^j_{\boldsymbol\gamma}*\rho^{\boldsymbol\gamma-2})(t)$ for $t\in(0,\infty)$. Using the estimates from Theorem \ref{HeatKernelEstimates} we obtain that
\begin{align*}
|I(t)|&\leq\int_{M'}|H^j_{\boldsymbol\gamma}(s,y)|\rho(y)^{\boldsymbol\gamma-2}\;\mathrm dV_{g}(y)\\
&\leq c\int_{M'}\rho(y)^{\boldsymbol\gamma-2}\int_0^t(s+\rho(y)^2)^{-\frac{m+\boldsymbol\gamma^-}{2}}\;\mathrm ds\;\mathrm dV_g(y),
\end{align*}
where $\boldsymbol\gamma^-$ is as in \eq{BIPPELKISSEN}. We can estimate the integral with respect to $s$ by
\[
\int_0^t(s+\rho(y)^2)^{-\frac{m+\boldsymbol\gamma^-}{2}}\;\mathrm ds\leq c\cdot\rho(y)^{2-m-\boldsymbol\gamma^-}
\]
and hence we obtain that
\begin{equation}\label{rte3}
|I(t)|\leq c\int_{M'}\rho(y)^{\boldsymbol\gamma-\boldsymbol\gamma^--m}\;\mathrm dV_g(y).
\end{equation}
Using that the Riemannian metric $\phi_i^*(g)$ is asymptotic to the Riemannian cone metric $g_i$ on $\Sigma_i\times(0,R)$ and using that $\mathrm dV_{g_i}(\sigma,r)=r^{m-1}\mathrm dr\;\mathrm dV_{h_i}(\sigma)$ it follows that the integral in \eq{rte3} is finite if and only if $\boldsymbol\gamma-\boldsymbol\gamma^--m+(m-1)>-1$, which holds if and only if $\boldsymbol\gamma>\boldsymbol\gamma^-$.
\end{proof}

Using Propositions \ref{EstimateONE} and \ref{EstimateTWO} we are now able to prove existence and maximal regularity of solutions to \eq{HeatEquation}, when $f$ lies in a weighted parabolic H\"older space with discrete asymptotics.

\begin{Thm}\label{HoelderRegularityHeat}
Let $(M,g)$ be a compact $m$-dimensional Riemannian manifold with conical singularities as in Definition \ref{DefinitionConicalSingularities}, $m\geq 3$. Let $T>0$, $k\in\mathbb{N}$ with $k\geq 2$, $\alpha\in(0,1)$, and $\boldsymbol\gamma\in\mathbb{R}^n$ with $\boldsymbol\gamma>2-m$ and $\gamma_i\notin\mathcal{E}_{\Sigma_i}$ for $i=1,\ldots,n$. Given $f\in C^{0,k-2,\alpha}_{\boldsymbol\gamma-2,\mathsf{P}_{\boldsymbol\gamma-2}}((0,T)\times M')$, there exists a unique $u\in C^{1,k,\alpha}_{\boldsymbol\gamma,\mathsf{P}_{\boldsymbol\gamma}}((0,T)\times M')$ solving the Cauchy problem \eq{HeatEquation}.
\end{Thm}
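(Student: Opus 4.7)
The plan is to build the solution via Duhamel's formula, $u = H * f$, with $H$ the Friedrichs heat kernel on $(M,g)$, and then read off the regularity from the asymptotic structure of $H$ supplied by Theorem \ref{HeatKernelEstimates}. The entire argument rests on splitting both $f$ and the resulting $u$ according to the discrete asymptotics.

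First I would decompose the source as $f = f_1 + f_2$, where $f_1 \in C^{0,k-2,\alpha}_{\boldsymbol\gamma-2}((0,T)\times M')$ and, for each $t$, $f_2(t,\cdot) \in \im\,\Psi_{\boldsymbol\gamma-2}$ is time-dependent but finite-dimensional in the spatial variable. Define $u(t,x) = (H*f)(t,x)$ by \eq{Convolution}. Apply Theorem \ref{HeatKernelEstimates} with $l = 0$ to insert the splitting $H = \sum_{j=1}^{N_0} \psi^j_{\boldsymbol\gamma}(x) H^j_{\boldsymbol\gamma}(t-s,y) + R(t-s,x,y)$ into the convolution, yielding
\begin{equation*}
u(t,x) = \sum_{j=1}^{N_0} \psi^j_{\boldsymbol\gamma}(x)\, v^j(t) + w(t,x), \qquad v^j(t) = (H^j_{\boldsymbol\gamma} * f)(t), \quad w = R*f.
\end{equation*}
Since $|f(s,y)| \leq C\|f\|_{C^0_{\boldsymbol\gamma-2,\mathsf{P}_{\boldsymbol\gamma-2}}} \rho(y)^{(\boldsymbol\gamma-2)^-}$ near each singularity (and up to controlled finite-dimensional pieces from $f_2$ handled analogously), Proposition \ref{EstimateTWO} controls each $v^j(t)$ uniformly in $t$, while Proposition \ref{EstimateONE} gives $|w(t,x)| \leq C\|f\|_{C^0_{\boldsymbol\gamma-2,\mathsf{P}_{\boldsymbol\gamma-2}}}\rho(x)^{\boldsymbol\gamma}$. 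Thus $u \in C^{0,0}_{\boldsymbol\gamma,\mathsf{P}_{\boldsymbol\gamma}}((0,T)\times M')$ with the displayed decomposition realizing it as the sum of its discrete asymptotics part and its regular part.

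Next, I would upgrade the regularity. Differentiating $v^j$ in $t$ produces convolutions of $\partial_t H^j_{\boldsymbol\gamma}$ with $f$, and Theorem \ref{HeatKernelEstimates} with $l=1$ together with Proposition \ref{EstimateTWO} gives $v^j \in C^{1,\alpha/2}((0,T))$. Proposition \ref{Asymptotics}(ii) implies that $\Delta_g(\psi^j_{\boldsymbol\gamma})$ lies in $\im\,\Psi_{\boldsymbol\gamma-2}$ modulo $C^\infty_{\cs}(M')$, so the regular part $w$ satisfies $\partial_t w - \Delta_g w = f - (\partial_t - \Delta_g)\sum_j\psi^j_{\boldsymbol\gamma}\,v^j$ with right-hand side in $C^{0,k-2,\alpha}_{\boldsymbol\gamma-2}((0,T)\times M')$. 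Proposition \ref{WeightedSchauderEstimates} then promotes $w$ to $C^{1,k,\alpha}_{\boldsymbol\gamma}((0,T)\times M')$, and combining with the $C^{1,\alpha/2}$ regularity of the $v^j$ yields $u \in C^{1,k,\alpha}_{\boldsymbol\gamma,\mathsf{P}_{\boldsymbol\gamma}}((0,T)\times M')$, as required. Uniqueness follows by the weighted Sobolev embedding (Theorem \ref{WeightedSobolevEmbedding}): any other solution $\tilde u$ yields a difference in $L^2(M')$ for each $t$, satisfying the homogeneous Cauchy problem for the Friedrichs heat semigroup with zero initial data, and is therefore identically zero.

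The main obstacle is the bookkeeping in the middle step: one has to verify that the time-dependent coefficients $v^j(t)$ inherit the correct H\"older regularity in $t$, and that the interaction between the discrete asymptotics of $f$ and those of $H$ does not generate terms outside $\im\,\Psi_{\boldsymbol\gamma}$. Once one has the Schauder estimate of Proposition \ref{WeightedSchauderEstimates} in a form compatible with discrete asymptotics—extracted from the unweighted case in the same manner that Proposition \ref{SchauderLPLaplace2} is extracted from Proposition \ref{SchauderLPLaplace}—the remaining steps are then a matter of careful tracking of indices against the expansion in Theorem \ref{HeatKernelEstimates}.
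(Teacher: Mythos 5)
Your skeleton coincides with the paper's: Duhamel's formula $u=H*f$, the splitting $f=f_1+f_2$, insertion of the kernel decomposition from Theorem \ref{HeatKernelEstimates}, and Propositions \ref{EstimateONE} and \ref{EstimateTWO} to get $u\in C^{0,0}_{\boldsymbol\gamma,\mathsf{P}_{\boldsymbol\gamma}}((0,T)\times M')$. The first genuine gap is in your upgrade step for $w$. The right-hand side $f-(\partial_t-\Delta_g)\sum_j\psi^j_{\boldsymbol\gamma}v^j$ does \emph{not} lie in $C^{0,k-2,\alpha}_{\boldsymbol\gamma-2}((0,T)\times M')$ in general: $f_2$, the terms $v^j\Delta_g\psi^j_{\boldsymbol\gamma}$ (which by Proposition \ref{Asymptotics} lie in $\im\;\Psi_{\boldsymbol\gamma-2}$ modulo compactly supported functions), and $\psi^j_{\boldsymbol\gamma}\partial_tv^j$ behave like $\rho^{\beta}$ with $\beta\in\mathcal{E}_{\Sigma_i}\cap[0,\gamma_i)$, and since $0\in\mathcal{E}_{\Sigma_i}$ this is strictly slower decay than $\rho^{\boldsymbol\gamma-2}$ as soon as $\gamma_i>2$ --- this is precisely the phenomenon the augmented spaces were introduced for. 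So Proposition \ref{WeightedSchauderEstimates} cannot be applied to $w$ as you propose except when $\boldsymbol\gamma\leq 2$. The paper circumvents this either by differentiating the convolution directly and reusing Theorem \ref{HeatKernelEstimates} and Propositions \ref{EstimateONE}--\ref{EstimateTWO} with $l\geq 1$, or by first placing $u_1$ in $C^{1,k,\alpha}_{\boldsymbol 0}$ (the discrete asymptotics are bounded, so the weighted Schauder estimate applies at weight $\boldsymbol 0$) and then bootstrapping the weight and the discrete asymptotics through the elliptic isomorphism of Proposition \ref{Invertibility}, iterating $\Delta_gu_1=\partial_tu_1-f_1$ in steps of two in the weight; your argument would need an analogous iterated subtraction to reach general $\boldsymbol\gamma>2$.

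The second gap is uniqueness. A difference of two solutions lies in $C^{1,k,\alpha}_{\boldsymbol\gamma,\mathsf{P}_{\boldsymbol\gamma}}$, and for $2-m<\gamma_i\leq-\frac{m}{2}$ (which occurs for $m\geq 5$) such a function need not belong to $L^2(M')$ at all, so "the difference is in $L^2$ for each $t$" is not available from Theorem \ref{WeightedSobolevEmbedding} or otherwise. Even when it is in $L^2$, concluding that it vanishes via the Friedrichs semigroup requires knowing that it lies in the Friedrichs domain, or equivalently justifying the integration by parts $\langle\Delta_gv,v\rangle_{L^2}=-\|\nabla v\|^2_{L^2}$, and the boundary flux at the cone points only vanishes for $\boldsymbol\gamma>1-\frac{m}{2}$. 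The paper's proof therefore runs the energy argument only for $\boldsymbol\gamma>1-\frac{m}{2}$, and for $2-m<\boldsymbol\gamma\leq 1-\frac{m}{2}$ it first notes $\int_{M'}u\,\mathrm dV_g=0$ and applies $\Delta_g^{-1}$ from Proposition \ref{Invertibility} repeatedly to raise the weight by $2$ until the energy argument applies; some such device is missing from your proposal.
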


\begin{proof}
Let $f\in C^{0,k-2,\alpha}_{\boldsymbol\gamma-2,\mathsf{P}_{\boldsymbol\gamma-2}}((0,T)\times M')$, then we can write $f=f_1+f_2$ with 
\[
f_1\in C^{0,k-2,\alpha}_{\boldsymbol\gamma-2}((0,T)\times M')\quad\mbox{and}\quad f_2\in C^{0,\alpha/2}((0,T);\im\;\Psi_{\boldsymbol\gamma-2}).
\]
Define $u=H*f$, $u_1=H*f_1$, and $u_2=H*f_2$, where $H$ is the Friedrichs heat kernel. Using Theorem \ref{HeatKernelEstimates} we can write
\begin{align*}
u_1(t,x)=\left(\left(H-\sum\nolimits_{j=1}^{N_0}\psi^j_{\boldsymbol\gamma}H^j_{\boldsymbol\gamma}\right)*f_1\right)(t,x)+\sum\nolimits_{j=1}^{N_0}\psi^j_{\boldsymbol\gamma}(x)(H^j_{\boldsymbol\gamma}*f_1)(t)
\end{align*}
for $t\in(0,T)$ and $x\in M'$. Using that $f_1\in C^{0,k-2,\alpha}_{\boldsymbol\gamma-2}((0,T)\times M')$ and Proposition \ref{EstimateONE} we find that
\[
\left|\left(\left(H-\sum\nolimits_{j=1}^{N_0}\psi^j_{\boldsymbol\gamma}H^j_{\boldsymbol\gamma}\right)*f_1\right)(t,x)\right|\leq c\|f_1\|_{C^{0,0}_{\boldsymbol\gamma-2}}\rho(x)^{\boldsymbol\gamma}.
\]
Moreover, from Proposition \ref{EstimateTWO} it follows that 
\[
|(H^j_{\boldsymbol\gamma}*f_1)(t)|\leq c\|f_1\|_{C^{0,0}_{\boldsymbol\gamma-2}}
\]
for $j=1,\ldots,N_0$. Hence $u_1\in C^{0,0}_{\boldsymbol\gamma,\mathsf{P}_{\boldsymbol\gamma}}((0,T)\times M')$. In a similar way one can now show that in fact $u_1\in C^{1,k,\alpha}_{\boldsymbol\gamma,\mathsf{P}_{\boldsymbol\gamma}}((0,T)\times M')$. 

Alternatively one can show that $u_1\in C^{1,k,\alpha}((0,T)\times N)$ for every $N\subset\subset M'$. If $\boldsymbol\gamma <0$, then the discrete asymptotics are trivial and the weighted Schauder estimates from Proposition \ref{WeightedSchauderEstimates} imply that $u_1\in C^{1,k,\alpha}_{\boldsymbol\gamma,\mathsf{P}_{\boldsymbol\gamma}}((0,T)\times M')$. If $\boldsymbol\gamma>0$, then $u_1\in C^{0,0}_{\boldsymbol 0}((0,T)\times M')$, since the discrete asymptotics are bounded functions on $M'$. Therefore again the weighted Schauder estimates imply that in fact $u_1\in C^{1,k,\alpha}_{\boldsymbol 0}((0,T)\times M')$. But then using Proposition \ref{Invertibility} and a simple iteration argument we conclude that $u_1\in C^{1,k,\alpha}_{\boldsymbol\gamma,\mathsf{P}_{\boldsymbol\gamma}}((0,T)\times M')$. 

The same argument as before also shows that $u_2=H*f_2$ lies in $C^{1,l,\alpha}_{\boldsymbol\delta,\mathsf{P}_{\boldsymbol\delta}}((0,T)\times M')$ for every $l\in\mathbb{N}$ and $\boldsymbol\delta\in\mathbb{R}^n$. Hence $u\in C^{1,k,\alpha}_{\boldsymbol\gamma,\mathsf{P}_{\boldsymbol\gamma}}((0,T)\times M')$ and $u$ solves the Cauchy problem \eq{HeatEquation}. 

In order to show that $u$ is the unique solution of the Cauchy problem \eq{HeatEquation} it suffices to show that if $u\in C^{1,k,\alpha}_{\boldsymbol\gamma,\mathsf{P}_{\boldsymbol\gamma}}((0,T)\times M')$ solves the Cauchy problem \eq{HeatEquation} with $f\equiv 0$, then $u\equiv 0$. Thus let $u\in C^{1,k,\alpha}_{\boldsymbol\gamma,\mathsf{P}_{\boldsymbol\gamma}}((0,T)\times M')$ be a solution of \eq{HeatEquation} with $f\equiv 0$ and assume first that $\boldsymbol\gamma>3-\frac{m}{2}$. Then for $t\in(0,T)$
\[
\frac{\mathrm d}{\mathrm dt}\|u(t,\cdot)\|_{L^2}^2=2\langle\Delta_gu(t,\cdot),u(t,\cdot)\rangle_{L^2}=-2\|\nabla u(t,\cdot)\|_{L^2}^2\leq 0.
\] 
Since $u(0,\cdot)\equiv 0$, it follows that $u\equiv 0$. Now assume that $\boldsymbol\gamma\in\mathbb{R}^n$ with $2-m<\boldsymbol\gamma\leq 3-\frac{m}{2}$. Then it easily follows that $\int_{M'}u(t,x)\;\mathrm dV_g(x)=0$ for $t\in(0,T)$. Using Proposition \ref{Invertibility} we can define $u_1=\Delta_g^{-1}u$. Then $u_1\in C^{1,k+2,\alpha}_{\boldsymbol\gamma+2,\mathsf{P}_{\boldsymbol\gamma+2}}((0,T)\times M')$ and $u_1$ solves the Cauchy problem \eq{HeatEquation} with $f\equiv 0$. We can iterate this argument and define $u_l=\Delta_g^{-l}u$ for $l\in\mathbb{N}$ with $\boldsymbol\gamma+2l>3-\frac{m}{2}$. Then $u_l\in C^{1,k+2l,\alpha}_{\boldsymbol\gamma+2l,\mathsf{P}_{\boldsymbol\gamma+2l}}((0,T)\times M')$ and $u_l$ solves the Cauchy problem \eq{HeatEquation} with $f\equiv 0$. Then as above it follows that $u_l\equiv 0$ and hence $u\equiv 0$. This completes the proof of Theorem \ref{HoelderRegularityHeat}.
\end{proof}

\subsection{The Cauchy problem for the heat equation. II. Maximal Sobolev regularity}
We first prove a generalization of Young's inequality on Riemannian manifolds with conical singularities involving weighted $L^p$-norms. The proof follows the same ideas as in Aubin \cite[Prop. 3.64]{Aubin}.

\begin{Prop}\label{Conv}
Let $(M,g)$ be a compact $m$-dimensional Riemannian manifold with conical singularities as in Definition \ref{DefinitionConicalSingularities}, $T>0$, $p\in(1,\infty)$, and $\boldsymbol\delta,\boldsymbol\varepsilon\in\mathbb{R}^n$. Let $f\in W^{0,0,p}_{\boldsymbol\varepsilon}((0,T)\times M')$ and $G\in C^0_{\loc}(((0,T)\times(0,T)\times M'\times M')\backslash\Delta)$, where $\Delta=\{(t,t,x,x)\;:\;t\in(0,T),\;x\in M'\}$. Assume that
\[
\sup_{\substack{t\in(0,T) \\ x\in M'}}\rho(x)^{\boldsymbol\alpha_2}\|G(t,\cdot,x,\cdot)\|_{W^{0,0,1}_{-\boldsymbol\beta_2-m}},\;\sup_{\substack{s\in(0,T) \\ y\in M'}}\rho(y)^{\boldsymbol\beta_1}\|G(\cdot,s,\cdot,y)\|_{W^{0,0,1}_{-\boldsymbol\alpha_1+\boldsymbol\delta p}}<\infty
\]
for some $\boldsymbol\alpha_1,\boldsymbol\alpha_2,\boldsymbol\beta_1,\boldsymbol\beta_2\in\mathbb{R}^n$ that satisfy
\begin{equation}\label{Relations}
\frac{\boldsymbol\alpha_1}{p}+\boldsymbol\alpha_2\left(1-\frac{1}{p}\right)=0\quad\mbox{and}\quad\frac{\boldsymbol\beta_1}{p}+\boldsymbol\beta_2\left(1-\frac{1}{p}\right)=\boldsymbol\varepsilon+\frac{m}{p}.
\end{equation}
Then $G*f\in W^{0,0,p}_{\boldsymbol\delta}((0,T)\times M')$ and moreover 
\begin{eqnarray*}
&&\|G*f\|_{W^{0,0,p}_{\boldsymbol\delta}}\leq\|f\|_{W^{0,0,p}_{\boldsymbol\varepsilon}}\sup_{\substack{t\in(0,T) \\ x\in M'}}\rho(x)^{\boldsymbol\alpha_2(1-\frac{1}{p})}\|G(t,\cdot,x,\cdot)\|^{1-\frac{1}{p}}_{W^{0,0,1}_{-\boldsymbol\beta_2-m}}\\&&\quad\quad\quad\quad\quad\quad\quad\quad\quad\quad\quad\quad\times\sup_{\substack{s\in(0,T) \\ y\in M'}}\rho(y)^{\frac{\boldsymbol\beta_1}{p}}\|G(\cdot,s,\cdot,y)\|^{\frac{1}{p}}_{W^{0,0,1}_{-\boldsymbol\alpha_1+\boldsymbol\delta p}}.
\end{eqnarray*}
\end{Prop}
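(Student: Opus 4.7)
The plan is to prove this via a weighted version of the classical Young's inequality argument, following the same Hölder-based splitting as in Aubin's Proposition 3.64 but with carefully chosen weights on $\rho(x)$ and $\rho(y)$ so that the two conditions \eqref{Relations} exactly match. Let $q\in(1,\infty)$ be the conjugate exponent of $p$, so that $\frac{1}{q}=1-\frac{1}{p}$, and introduce the weight $W(x,y)=\rho(x)^{\boldsymbol\alpha_2/q}\rho(y)^{\boldsymbol\beta_2/q}$. The first step is to write pointwise
\[
|G(t,s,x,y)f(s,y)|=\bigl(|G|^{1/q}W(x,y)\bigr)\cdot\bigl(|G|^{1/p}W(x,y)^{-1}|f|\bigr)
\]
and apply Hölder's inequality to the $(s,y)$-integration. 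This yields
\[
|(G*f)(t,x)|^p\leq\Bigl(\int_0^t\!\!\int_{M'}|G|W^{q}\,\mathrm dV_g(y)\,\mathrm ds\Bigr)^{p/q}\int_0^t\!\!\int_{M'}|G|W^{-p}|f|^p\,\mathrm dV_g(y)\,\mathrm ds,
\]
and by construction the first factor is $\rho(x)^{\boldsymbol\alpha_2(p-1)}\|G(t,\cdot,x,\cdot)\|_{W^{0,0,1}_{-\boldsymbol\beta_2-m}}^{p-1}$, which is uniformly bounded by the first supremum in the hypothesis raised to the power $p-1$.

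The second step is to multiply the resulting estimate by $\rho(x)^{-\boldsymbol\delta p-m}$ and integrate in $(t,x)\in(0,T)\times M'$, then apply Fubini to interchange the order of integration. After using $W^{-p}=\rho(x)^{-\boldsymbol\alpha_2(p-1)}\rho(y)^{-\boldsymbol\beta_2(p-1)}$, the inner $(t,x)$-integral becomes
\[
\int_0^T\!\!\int_{M'}\rho(x)^{-\boldsymbol\delta p-m-\boldsymbol\alpha_2(p-1)}|G(t,s,x,y)|\,\mathrm dV_g(x)\,\mathrm dt.
\]
Here the first identity in \eqref{Relations}, namely $\boldsymbol\alpha_1/p+\boldsymbol\alpha_2(1-1/p)=0$, is used to rewrite $-\boldsymbol\delta p-m-\boldsymbol\alpha_2(p-1)=\boldsymbol\alpha_1-\boldsymbol\delta p-m$, which is precisely the weight appearing in $\|G(\cdot,s,\cdot,y)\|_{W^{0,0,1}_{-\boldsymbol\alpha_1+\boldsymbol\delta p}}$.

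The third step multiplies and divides by $\rho(y)^{\boldsymbol\beta_1}$, after which the second identity in \eqref{Relations}, namely $\boldsymbol\beta_1+\boldsymbol\beta_2(p-1)=p\boldsymbol\varepsilon+m$, is used to convert the remaining $y$-weight into $\rho(y)^{-p\boldsymbol\varepsilon-m}$, matching the weight in $\|f\|^p_{W^{0,0,p}_{\boldsymbol\varepsilon}}$. Taking the supremum of $\rho(y)^{\boldsymbol\beta_1}\|G(\cdot,s,\cdot,y)\|_{W^{0,0,1}_{-\boldsymbol\alpha_1+\boldsymbol\delta p}}$ out of the $(s,y)$-integral, extracting a $p$-th root, and observing that $(\sup\rho(x)^{\boldsymbol\alpha_2}\|G\|)^{1-1/p}=\sup\rho(x)^{\boldsymbol\alpha_2(1-1/p)}\|G\|^{1-1/p}$ gives the stated estimate.

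The only real obstacle is the bookkeeping of weights: one must verify that the two conditions in \eqref{Relations} are exactly what is needed for the two successive substitutions, one matching the $x$-weight to the second $W^{0,0,1}$-norm of $G$ and the other matching the $y$-weight to the $W^{0,0,p}_{\boldsymbol\varepsilon}$-norm of $f$. Once the splitting weight $W(x,y)=\rho(x)^{\boldsymbol\alpha_2/q}\rho(y)^{\boldsymbol\beta_2/q}$ is chosen, every weight balances automatically and the computation is essentially the classical Young's inequality proof.
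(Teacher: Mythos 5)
Your proposal is correct and follows essentially the same route as the paper's proof: a weighted H\"older splitting of $G(t,s,x,y)f(s,y)$ into a $q$-factor and a $p$-factor, Fubini in $(t,x)$, and the two relations in \eq{Relations} used exactly to match the $x$-weight to $\|G(\cdot,s,\cdot,y)\|_{W^{0,0,1}_{-\boldsymbol\alpha_1+\boldsymbol\delta p}}$ and the $y$-weight to $\|f\|_{W^{0,0,p}_{\boldsymbol\varepsilon}}$. Your splitting weight $W(x,y)=\rho(x)^{\boldsymbol\alpha_2/q}\rho(y)^{\boldsymbol\beta_2/q}$ is just a cosmetic repackaging of the paper's substitution $f=\rho^{\boldsymbol\varepsilon+m/p}f_{\boldsymbol\varepsilon}$, so no further changes are needed.
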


\begin{proof}
Without loss of generality we can assume that $f$ and $G$ are non-negative.
We write
\begin{align*}
&G(t,s,x,y)f(s,y)=\left(G(t,s,x,y)f(s,y)^p\right)^{\frac{1}{p}}G(t,s,x,y)^{1-\frac{1}{p}}\\
&\quad\quad=\left(G(t,s,x,y)f_{\boldsymbol\varepsilon}(s,y)^p\right)^{\frac{1}{p}}G(t,s,x,y)^{1-\frac{1}{p}}\rho(y)^{\boldsymbol\varepsilon+\frac{m}{p}}\\
&\quad\quad=\left(\rho(x)^{\boldsymbol\alpha_1}\rho(y)^{\boldsymbol\beta_1}G(t,s,x,y)f_{\boldsymbol\varepsilon}(s,y)^p\right)^{\frac{1}{p}}\left(\rho(x)^{\boldsymbol\alpha_2}\rho(y)^{\boldsymbol\beta_2}G(t,s,x,y)\right)^{1-\frac{1}{p}},
\end{align*}
where $f_{\boldsymbol\varepsilon}(s,y)=\rho(y)^{-\boldsymbol\varepsilon-\frac{m}{p}}f(s,y)$ and $\boldsymbol\alpha_1,\boldsymbol\alpha_2,\boldsymbol\beta_1,\boldsymbol\beta_2\in\mathbb{R}^n$ satisfy \eq{Relations}. Using H\"older's inequality we find
\begin{align*}
|(G*f)(t,x)|&\leq\left(\int_0^T\int_{M'}\rho(x)^{\boldsymbol\alpha_1}\rho(y)^{\boldsymbol\beta_1}G(t,s,x,y)f_{\boldsymbol\varepsilon}(s,y)^p\mathrm dV_g(y)\;\mathrm ds\right)^{\frac{1}{p}}\\&\quad\quad\times\left(\int_0^T\int_{M'}\rho(x)^{\boldsymbol\alpha_2}\rho(y)^{\boldsymbol\beta_2}G(t,s,x,y)\;\mathrm dV_g(y)\;\mathrm ds\right)^{1-\frac{1}{p}}.
\end{align*}
It follows that
\begin{align*}
&\|G*f\|_{W^{0,0,p}_{\boldsymbol\delta}}^p\leq\int\limits_0^T\int\limits_{M'}\left\{\int\limits_0^T\int\limits_{M'}\rho(x)^{\boldsymbol\zeta_1}\rho(y)^{\boldsymbol\beta_1}G(t,s,x,y)f_{\boldsymbol\varepsilon}(s,y)^p\;\mathrm dV_g(y)\;\mathrm ds\right.\\&\left.\quad\quad\quad\quad\times\left(\int\limits_0^T\int\limits_{M'}\rho(x)^{\boldsymbol\alpha_2}\rho(y)^{\boldsymbol\beta_2}G(t,s,x,y)\;\mathrm dV_g(y)\;\mathrm ds\right)^{p-1}\right\}\;\mathrm dV_g(x)\;\mathrm dt
\end{align*}
with $\boldsymbol\zeta_1=\boldsymbol\alpha_1-\boldsymbol\delta p-m$. Observe that
\begin{align*}
\int_0^T\int_{M'}\rho(x)^{\boldsymbol\alpha_2}\rho(y)^{\boldsymbol\beta_2}G(t,s,x,y)\;\mathrm dV_g(y)\;\mathrm ds=\rho(x)^{\boldsymbol\alpha_2}\|G(t,\cdot,x,\cdot)\|_{W^{0,0,1}_{-\boldsymbol\beta_2-m}}.
\end{align*}
Hence
\begin{align*}
&\|G*f\|_{W^{0,0,p}_{\boldsymbol\delta}}^p\leq\left(\sup_{\substack{t\in(0,T) \\ x\in M'}}\rho(x)^{\boldsymbol\alpha_2}\|G(t,\cdot,x,\cdot)\|_{W^{0,0,1}_{-\boldsymbol\beta_2-m}}\right)^{p-1}\quad\\&\times\int\limits_0^T\int\limits_{M'}\left\{\int\limits_0^T\int\limits_M\rho(x)^{\boldsymbol\zeta_1}\rho(y)^{\boldsymbol\beta_1}G(t,s,x,y)f_{\boldsymbol\varepsilon}(s,y)^p\;\mathrm dV_g(y)\;\mathrm ds\right\}\;\mathrm dV_g(x)\;\mathrm dt.
\end{align*}
Finally we have 
\begin{align*}
&\int\limits_0^T\int\limits_{M'}\left\{\int\limits_0^T\int\limits_{M'}\rho(x)^{\boldsymbol\zeta_1}\rho(y)^{\boldsymbol\beta_1}G(t,s,x,y)f_{\boldsymbol\varepsilon}(s,y)^p\;\mathrm dV_g(y)\;\mathrm ds\right\}\;\mathrm dV_g(x)\;\mathrm dt\quad\\&\quad=\int\limits_0^T\int\limits_{M'}\left\{\int\limits_0^T\int\limits_{M'}\rho(x)^{\boldsymbol\zeta_1}G(t,s,x,y)\;\mathrm dV_g(x)\;\mathrm dt\right\}\rho(y)^{\boldsymbol\beta_1}f_{\boldsymbol\varepsilon}(s,y)^p\;\mathrm dV_g(y)\;\mathrm ds\\&\quad\quad=\int\limits_0^T\int\limits_{M'}\|G(\cdot,s,\cdot,y)\|_{W^{0,0,1}_{-\boldsymbol\alpha_1+\boldsymbol\delta p}}\rho(y)^{\boldsymbol\beta_1}f_{\boldsymbol\varepsilon}(s,y)^p\;\mathrm dV_g(y)\;\mathrm ds\\&\quad\quad\quad\leq\|f\|_{\boldsymbol\varepsilon}^p\sup_{\substack{s\in(0,T) \\ y\in M'}}\rho(y)^{\boldsymbol\beta_1}\|G(\cdot,s,\cdot,y)\|_{W^{0,0,1}_{-\boldsymbol\alpha_1+\boldsymbol\delta p}}
\end{align*}
from which the claim follows.
\end{proof}

The next proposition is proved in a similar way to Proposition \ref{Conv}.

\begin{Prop}\label{Conv2}
Let $(M,g)$ be a compact $m$-dimensional Riemannian manifold with conical singularities as in Definition \ref{DefinitionConicalSingularities}, $T>0$, $p\in(1,\infty)$, and $\boldsymbol\delta,\boldsymbol\varepsilon\in\mathbb{R}^n$. Let $f\in W^{0,0,p}_{\boldsymbol\varepsilon}((0,T)\times M')$ and $G\in C^0_{\loc}(((0,T)\times(0,T)\times M')\backslash\Delta)$, where $\Delta=\{(t,t,x)\;:\;t\in(0,T),\;x\in M'\}$. Assume that
\[
\sup_{t\in(0,T)}\|G(t,\cdot,\cdot)\|_{W^{0,0,1}_{-\boldsymbol\alpha_2-m}},\;\sup_{\substack{s\in(0,T) \\ x\in M'}}\rho(x)^{\boldsymbol\alpha_1}\|G(\cdot,s,y)\|_{L^1}<\infty
\]
for some $\boldsymbol\alpha_1,\boldsymbol\alpha_2\in\mathbb{R}^n$ that satisfy
\begin{equation}\label{Relations2}
\frac{\boldsymbol\alpha_1}{p}+\boldsymbol\alpha_2\left(1-\frac{1}{p}\right)=\boldsymbol\varepsilon+\frac{m}{p}.
\end{equation}
Then $G*f\in L^p((0,T))$ and moreover 
\begin{eqnarray*}
&&\|G*f\|_{L^p}\leq\|f\|_{W^{0,0,p}_{\boldsymbol\varepsilon}}\sup_{t\in(0,T)}\|G(t,\cdot,\cdot)\|^{1-\frac{1}{p}}_{W^{0,0,1}_{-\boldsymbol\alpha_2-m}}\sup_{\substack{s\in(0,T) \\ x\in M'}}\rho(x)^{\frac{\boldsymbol\alpha_1}{p}}\|G(\cdot,s,x)\|^{\frac{1}{p}}_{L^1}.
\end{eqnarray*}
\end{Prop}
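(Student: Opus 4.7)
The plan is to mimic the H\"older-splitting argument used for Proposition \ref{Conv}, adapted to the simpler setting in which $G$ has only a single spatial variable. Without loss of generality assume $f\ge 0$ and $G\ge 0$, and set $f_{\boldsymbol\varepsilon}(s,y)=\rho(y)^{-\boldsymbol\varepsilon-m/p}f(s,y)$, so that $\|f\|_{W^{0,0,p}_{\boldsymbol\varepsilon}}^p=\int_0^T\int_{M'}f_{\boldsymbol\varepsilon}^p\;\mathrm dV_g\;\mathrm ds$. Using the balance condition \eq{Relations2}, which rearranges as $\boldsymbol\varepsilon+m/p=\boldsymbol\alpha_1/p+\boldsymbol\alpha_2(1-1/p)$, I factor the integrand as
\[
G(t,s,y)f(s,y)=\bigl[G(t,s,y)f_{\boldsymbol\varepsilon}(s,y)^p\rho(y)^{\boldsymbol\alpha_1}\bigr]^{1/p}\bigl[G(t,s,y)\rho(y)^{\boldsymbol\alpha_2}\bigr]^{1-1/p}.
\]
Applying H\"older's inequality on $(0,T)\times M'$ with conjugate exponents $p$ and $p/(p-1)$ then yields a pointwise bound for $|(G*f)(t)|$ in which the second factor equals $\|G(t,\cdot,\cdot)\|_{W^{0,0,1}_{-\boldsymbol\alpha_2-m}}^{1-1/p}$, which is finite and bounded uniformly in $t$ by the first hypothesis.

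The next step is to raise the resulting pointwise estimate to the $p$-th power, replace the second factor by its uniform bound in $t$, and integrate in $t\in(0,T)$. An application of Fubini exchanges the outer $t$-integration with the integration over $(s,y)$, turning $\int_0^T G(t,s,y)\;\mathrm dt$ into $\|G(\cdot,s,y)\|_{L^1}$. The second hypothesis then bounds $\rho(y)^{\boldsymbol\alpha_1}\|G(\cdot,s,y)\|_{L^1}$ uniformly in $(s,y)$; factoring this supremum out of the remaining integral leaves precisely $\int_0^T\int_{M'}f_{\boldsymbol\varepsilon}^p\;\mathrm dV_g\;\mathrm ds=\|f\|_{W^{0,0,p}_{\boldsymbol\varepsilon}}^p$. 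Taking $p$-th roots yields exactly the asserted estimate, and in particular shows $G*f\in L^p((0,T))$.

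There is essentially no obstacle beyond bookkeeping: the only substantive point to check is that the weight from $[\rho(y)^{\boldsymbol\alpha_1}]^{1/p}[\rho(y)^{\boldsymbol\alpha_2}]^{1-1/p}$ in the H\"older split exactly absorbs the factor $\rho(y)^{\boldsymbol\varepsilon+m/p}$ needed to convert $f$ into $f_{\boldsymbol\varepsilon}$, and this is exactly what \eq{Relations2} encodes. The structure is in fact strictly simpler than in Proposition \ref{Conv} since one pair of spatial weights (the $\boldsymbol\beta_1,\boldsymbol\beta_2$ pair there) collapses; consequently, as the paper remarks, the whole argument proceeds in the same way.
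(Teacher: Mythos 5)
Your proposal is correct and follows exactly the route the paper intends: the paper only remarks that Proposition \ref{Conv2} ``is proved in a similar way to Proposition \ref{Conv}'', and your adaptation of the H\"older splitting with weights $\rho^{\boldsymbol\alpha_1/p}\rho^{\boldsymbol\alpha_2(1-1/p)}=\rho^{\boldsymbol\varepsilon+m/p}$, followed by Fubini in $t$ and factoring out the two suprema, is precisely that argument with the $(\boldsymbol\beta_1,\boldsymbol\beta_2)$ pair collapsed.
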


Using Propositions \ref{Conv} and \ref{Conv2} we are now able to prove existence and maximal regularity of solutions to the Cauchy problem \eq{HeatEquation}, when $f$ lies in a weighted parabolic Sobolev space with discrete asymptotics.

\begin{Thm}\label{SobolevRegularityHeat}
Let $(M,g)$ be a compact $m$-dimensional Riemannian manifold with conical singularities as in Definition \ref{DefinitionConicalSingularities}, $m\geq 3$. Let $T>0$, $k\in\mathbb{N}$ with $k\geq 2$, $p\in(1,\infty)$, and $\boldsymbol\gamma\in\mathbb{R}^n$ with $\boldsymbol\gamma>2-m$ and $\gamma_i\notin\mathcal{E}_{\Sigma_i}$ for $i=1,\ldots,n$. Given $f\in W^{0,k-2,p}_{\boldsymbol\gamma-2,\mathsf{P}_{\boldsymbol\gamma-2}}((0,T)\times M')$, then there exists a unique $u\in W^{1,k,p}_{\boldsymbol\gamma,\mathsf{P}_{\boldsymbol\gamma}}((0,T)\times M')$ solving the Cauchy problem \eq{HeatEquation}.
\end{Thm}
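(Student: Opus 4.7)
The plan is to follow the same blueprint as in the proof of Theorem \ref{HoelderRegularityHeat}, but replacing the pointwise estimates of Propositions \ref{EstimateONE} and \ref{EstimateTWO} by the generalized Young-type inequalities in Propositions \ref{Conv} and \ref{Conv2}. First I would decompose $f=f_1+f_2$, with $f_1\in W^{0,k-2,p}_{\boldsymbol\gamma-2}((0,T)\times M')$ and $f_2\in L^p((0,T);\im\;\Psi_{\boldsymbol\gamma-2})$, and define $u_j=H*f_j$ using the Friedrichs heat kernel so that $u=u_1+u_2$ formally solves \eq{HeatEquation}. Applying Theorem \ref{HeatKernelEstimates} at level $l=0$, I would split
\[
u_1(t,x)=\Big(\big(H-\sum\nolimits_{j=1}^{N_0}\psi^j_{\boldsymbol\gamma}H^j_{\boldsymbol\gamma}\big)*f_1\Big)(t,x)+\sum\nolimits_{j=1}^{N_0}\psi^j_{\boldsymbol\gamma}(x)(H^j_{\boldsymbol\gamma}*f_1)(t),
\]
so that the analysis reduces to bounding two convolutions of different types.

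For the first summand I would apply Proposition \ref{Conv} with $\boldsymbol\varepsilon=\boldsymbol\gamma-2$ and $\boldsymbol\delta=\boldsymbol\gamma$, choosing $\boldsymbol\alpha_1,\boldsymbol\alpha_2,\boldsymbol\beta_1,\boldsymbol\beta_2\in\mathbb{R}^n$ satisfying \eq{Relations} in a way that makes the spatial integrals coming from the kernel estimate of Theorem \ref{HeatKernelEstimates} convergent (this is the $L^p$-analogue of the $L^\infty$-computation of Proposition \ref{EstimateONE}). For the second summand I would apply Proposition \ref{Conv2} to the coefficient $(H^j_{\boldsymbol\gamma}*f_1)(t)$, tuning $\boldsymbol\alpha_1,\boldsymbol\alpha_2$ via \eq{Relations2} so that the weighted $L^1$-norms of $H^j_{\boldsymbol\gamma}$ are controlled by the pointwise bound $|H^j_{\boldsymbol\gamma}(t,y)|\leq c\,(t+\rho(y)^2)^{-(m+\boldsymbol\gamma^-)/2}$ from Theorem \ref{HeatKernelEstimates}. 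The two inequalities together yield $u_1\in W^{0,0,p}_{\boldsymbol\gamma,\mathsf{P}_{\boldsymbol\gamma}}((0,T)\times M')$. Standard interior parabolic $L^p$-theory gives $u_1\in W^{1,k,p}((0,T)\times N)$ for every $N\subset\subset M'$, and then Proposition \ref{WeightedLpEstimates} — in the region $\boldsymbol\gamma\leq 0$ where the discrete asymptotics are trivial — bootstraps $u_1$ into $W^{1,k,p}_{\boldsymbol 0}((0,T)\times M')$; applying Proposition \ref{Invertibility} \rm{(ii)} iteratively as in Theorem \ref{HoelderRegularityHeat}, one then pushes the regularity up to $W^{1,k,p}_{\boldsymbol\gamma,\mathsf{P}_{\boldsymbol\gamma}}((0,T)\times M')$. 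The same argument applied to $f_2$ (which is compactly supported away from each singularity in the weight sense) shows that $u_2\in W^{1,l,p}_{\boldsymbol\delta,\mathsf{P}_{\boldsymbol\delta}}((0,T)\times M')$ for all $l\in\mathbb{N}$ and $\boldsymbol\delta\in\mathbb{R}^n$, hence $u=u_1+u_2$ lies in the required space and solves \eq{HeatEquation}.

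For uniqueness, suppose $u\in W^{1,k,p}_{\boldsymbol\gamma,\mathsf{P}_{\boldsymbol\gamma}}((0,T)\times M')$ solves \eq{HeatEquation} with $f\equiv 0$. By Theorem \ref{WeightedSobolevEmbedding} (for weights $\boldsymbol\gamma>1-\tfrac{m}{2}$) the function $u(t,\cdot)$ is in $L^2(M')$, and the standard energy identity $\frac{\mathrm d}{\mathrm dt}\|u(t)\|_{L^2}^2=-2\|\nabla u(t)\|_{L^2}^2\leq 0$ together with $u(0,\cdot)=0$ forces $u\equiv 0$. For $2-m<\boldsymbol\gamma\leq 1-\tfrac{m}{2}$ one first checks that $\int_{M'}u(t,x)\,\mathrm dV_g(x)=0$ and then iterates $u_l=\Delta_g^{-l}u$ using Proposition \ref{Invertibility} \rm{(ii)} until the weight lies in the $L^2$-range, exactly as in the proof of Theorem \ref{HoelderRegularityHeat}.

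The main obstacle will be the bookkeeping in the application of Propositions \ref{Conv} and \ref{Conv2}: one needs to produce admissible vectors $\boldsymbol\alpha_1,\boldsymbol\alpha_2,\boldsymbol\beta_1,\boldsymbol\beta_2$ for which both the constraints \eq{Relations} (respectively \eq{Relations2}) hold and the weighted spatial $L^1$-norms of the kernel pieces are finite. Concretely, this reduces to verifying radial integrability on $\Sigma_i\times(0,R)$ of expressions like $\rho(y)^{\boldsymbol\beta_1}(t+d_g(x,y)^2)^{-m/2}\cdot(\rho(x)^2+\rho(y)^2)^{-\boldsymbol\gamma^+/2}$ and $\rho(y)^{\boldsymbol\beta_1}(t+\rho(y)^2)^{-(m+\boldsymbol\gamma^-)/2}$ — the same kinds of integrals that appear in Propositions \ref{EstimateONE} and \ref{EstimateTWO}, where the assumptions $\boldsymbol\gamma>2-m$ and $\gamma_i\notin\mathcal{E}_{\Sigma_i}$ are exactly what one needs to keep the radial integrals convergent at $r=0$ and to obtain the correct decay rate $\rho^{\boldsymbol\gamma}$.
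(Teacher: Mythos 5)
Your proposal follows essentially the same route as the paper's own proof: the decomposition $f=f_1+f_2$, the splitting of $H*f_1$ via Theorem \ref{HeatKernelEstimates}, the application of Propositions \ref{Conv} and \ref{Conv2} with the weight bookkeeping resting on the integrals of Propositions \ref{EstimateONE} and \ref{EstimateTWO}, the bootstrap through interior $L^p$-theory, Proposition \ref{WeightedLpEstimates} and iteration with Proposition \ref{Invertibility}, and the uniqueness argument borrowed from Theorem \ref{HoelderRegularityHeat}. This is correct and matches the paper's argument in all essential points.
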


\begin{proof}
Let $f\in W^{0,k-2,p}_{\boldsymbol\gamma-2,\mathsf{P}_{\boldsymbol\gamma-2}}((0,T)\times M')$. Then we can write $f=f_1+f_2$ with
\[
f_1\in W^{0,k-2,p}_{\boldsymbol\gamma-2}((0,T)\times M')\quad\mbox{and}\quad f_2\in L^p((0,T);\im\;\Psi_{\boldsymbol\gamma-2}).
\]
Let $H$ be the Friedrichs heat kernel on $(M,g)$ and define $u=H*f$, $u_1=H*f_1$, and $u_2=H*f_2$.

The first step is to show that $u_1\in W^{0,0,p}_{\boldsymbol\gamma,\mathsf{P}_{\boldsymbol\gamma}}((0,T)\times M)$. Using Theorem \ref{HeatKernelEstimates} we write
\begin{equation}\label{h1}
u_1(t,x)=\left(\left(H-\sum\nolimits_{j=1}^{N_0}\psi^j_{\boldsymbol\gamma}H^j_{\boldsymbol\gamma}\right)*f_1\right)(t,x)+\sum\nolimits_{j=1}^{N_0}\psi^j_{\boldsymbol\gamma}(x)(H^j_{\boldsymbol\gamma}*f_1)(t)
\end{equation}
for $t\in(0,T)$ and $x\in M'$. We begin by showing that the first term on the right side of \eq{h1} lies in $W^{0,0,p}_{\boldsymbol\gamma}((0,T)\times M')$. Define $G\in C^0_{\loc}(((0,T)\times(0,T)\times M'\times M')\backslash\Delta)$ by
\[
G(t,s,x,y)=H(|t-s|,x,y)-\sum\nolimits_{j=1}^{N_0}\psi^j_{\boldsymbol\gamma}(x)H^j_{\boldsymbol\gamma}(|t-s|,y),
\]
where $\Delta$ is as in Proposition \ref{Conv}. Notice that 
\[
\left|\left(H-\sum\nolimits_{j=1}^{N_0}\psi^j_{\boldsymbol\gamma}H^j_{\boldsymbol\gamma}\right)*f_1(t,x)\right|\leq (|G|*|f_1|)(t,x).
\]
We now apply Proposition \ref{Conv} with $\boldsymbol\delta=\boldsymbol\gamma$ and $\boldsymbol\varepsilon=\boldsymbol\gamma-2$. Then we have to show that that
\[
\sup_{\substack{t\in(0,T)\\x\in M'}}\rho(x)^{\boldsymbol\alpha_2}\|G(t,\cdot,x,\cdot)\|_{W^{0,0,1}_{-\boldsymbol\beta_2-m}},\;\sup_{\substack{s\in(0,T)\\ y\in M'}}\rho(y)^{\boldsymbol\beta_1}\|G(\cdot,s,\cdot,y)\|_{W^{0,0,1}_{-\boldsymbol\alpha_1+\boldsymbol\gamma p}}<\infty,
\]
where $\boldsymbol\alpha_1,\boldsymbol\alpha_2,\boldsymbol\beta_1,\boldsymbol\beta_2\in\mathbb{R}^n$ satisfy \eq{Relations}. Since $\boldsymbol\gamma^+\geq 0$, where $\boldsymbol\gamma^+$ is as in \eq{BIPPELKISSEN}, and $p>1$, it suffices to prove that 
\begin{equation}\label{E1}
\sup_{\substack{t\in(0,T)\\x\in M'}}\rho(x)^{\boldsymbol\alpha_2}\|G(t,\cdot,x,\cdot)\|_{W^{0,0,1}_{-\boldsymbol\beta_2-m+\frac{\boldsymbol\gamma^+}{p-1}}},\;\sup_{\substack{s\in(0,T)\\ y\in M'}}\rho(y)^{\boldsymbol\beta_1}\|G(\cdot,s,\cdot,y)\|_{W^{0,0,1}_{-\boldsymbol\alpha_1+\boldsymbol\gamma p}}<\infty.
\end{equation}
We analyze the first term. Note that
\[
\|G(t,\cdot,x,\cdot)\|_{W^{0,0,1}_{-\boldsymbol\beta_2-m+\frac{\boldsymbol\gamma^+}{p-1}}}=|G*\rho^{\boldsymbol\beta_2-\frac{\boldsymbol\gamma^+}{p-1}}|(t,x).
\]
If $-m<\boldsymbol\beta_2-\frac{\boldsymbol\gamma^+}{p-1}<\boldsymbol\gamma^+-2$, then by Proposition \ref{EstimateONE} there exists a constant $c>0$, such that 
\[
|G*\rho^{\boldsymbol\beta_2-\frac{\boldsymbol\gamma^+}{p-1}}|(t,x)\leq c\cdot \rho(x)^{\boldsymbol\beta_2-\frac{\boldsymbol\gamma^+}{p-1}+2}.
\]
Hence, if
\begin{equation}\label{REL3}
\boldsymbol\alpha_2+\boldsymbol\beta_2-\frac{\boldsymbol\gamma^+}{p-1}+2\geq 0\quad\mbox{and}\quad-m<\boldsymbol\beta_2-\frac{\boldsymbol\gamma^+}{p-1}<\boldsymbol\gamma^+-2,
\end{equation}
then the first term in \eq{E1} is finite. In a similar way we find that if 
\begin{equation}\label{REL4}
-m<\boldsymbol\alpha_1-\boldsymbol\gamma p-m<2-\boldsymbol\gamma^+\quad\mbox{and}\quad\boldsymbol\beta_1+\boldsymbol\alpha_1-\boldsymbol\gamma p-m+2\geq 0,
\end{equation}
then the second term in \eq{E1} is finite. A straightforward computation now shows that \eq{REL3} and \eq{REL4} are equivalent to the existence of a $\boldsymbol\beta\in\mathbb{R}^n$ with
\begin{equation*}
\frac{\boldsymbol\gamma^+}{p-1}-m<\boldsymbol\beta<\frac{\boldsymbol\gamma^+}{p-1}+\boldsymbol\gamma^+-2\quad\mbox{and}\quad\frac{\boldsymbol\gamma p}{p-1}-2<\boldsymbol\beta<\frac{\boldsymbol\gamma+2-m+\boldsymbol\gamma^+}{p-1}-2.
\end{equation*}
Such a $\boldsymbol\beta$ exists if and only if $m\geq 3$ and $2-m<\boldsymbol\gamma<\boldsymbol\gamma^+$. It follows that
\begin{equation}\label{TIGGERHANDPUEPPCHEN}
\left(H-\sum\nolimits_{j=1}^{N_0}\psi^j_{\boldsymbol\gamma}H^j_{\boldsymbol\gamma}\right)*f_1\in W^{0,0,p}_{\boldsymbol\gamma}((0,T)\times M').
\end{equation} 

The next step is to show that $H^j_{\boldsymbol\gamma}*f_1\in L^{p}((0,T))$ for $j=1,\ldots,N_0$. Fix some $j=1,\ldots,N_0$ and define $G\in C^0_{\loc}(((0,T)\times(0,T)\times M')\backslash\Delta)$ by $G(t,s,x)=H^j_{\boldsymbol\gamma}(|t-s|,x)$, where $\Delta$ is as in Proposition \ref{Conv2}. We now apply Proposition \ref{Conv2} with $\boldsymbol\varepsilon=\boldsymbol\gamma-2$. Then it suffices to show that
\begin{equation}\label{h2}
\sup_{t\in(0,T)}\|G(t,\cdot,\cdot)\|_{W^{0,0,1}_{-\boldsymbol\alpha_2-m}},\;\sup_{\substack{s\in(0,T) \\ x\in M'}}\rho(x)^{\boldsymbol\alpha_1}\|G(\cdot,s,y)\|_{L^1}<\infty
\end{equation}
for some $\boldsymbol\alpha_1,\boldsymbol\alpha_2\in\mathbb{R}^n$ that satisfy \eq{Relations2}. Using Proposition \ref{EstimateTWO} it follows that if 
\begin{equation}\label{K2}
\boldsymbol\alpha_2>\boldsymbol\gamma^-+2\quad\mbox{and}\quad\boldsymbol\alpha_1+2-m-\boldsymbol\gamma^-\geq0,
\end{equation}
then the two terms in \eq{h2} are finite. It is now straightforward to show that \eq{Relations2} and \eq{K2} have a solution if and only if $\boldsymbol\gamma>\boldsymbol\gamma^-$. Together with \eq{TIGGERHANDPUEPPCHEN} we conclude that $u_1\in W^{0,0,p}_{\boldsymbol\gamma,\mathsf{P}_{\boldsymbol\gamma}}((0,T)\times M')$. 

The same arguments as in the proof of Theorem \ref{HoelderRegularityHeat} then show that in fact $u_1\in W^{1,k,p}_{\boldsymbol\gamma,\mathsf{P}_{\boldsymbol\gamma}}((0,T)\times M')$ and $u_2\in W^{1,l,p}_{\boldsymbol\delta,\mathsf{P}_{\boldsymbol\delta}}((0,T)\times M')$ for every $l\in\mathbb{N}$ and $\boldsymbol\delta\in\mathbb{R}^n$. Hence $u\in W^{1,k,p}_{\boldsymbol\gamma,\mathsf{P}_{\boldsymbol\gamma}}((0,T)\times M')$ as we wanted to show. Finally the uniqueness follows as in the proof of Theorem \ref{HoelderRegularityHeat}.
\end{proof}

{\footnotesize\noindent LINCOLN COLLEGE, TURL STREET, OX1 3DR, OXFORD, UNITED KINGDOM}\\
{\small\verb|tapio.behrndt@gmail.com|}

\end{document}